\documentclass{article}
\usepackage[top=1.2in, bottom=1.2in, left=1in, right=1in]{geometry}
\usepackage{amsmath,amssymb,amsfonts,amsthm,xcolor}
 \pdfoutput=1

\usepackage{ifpdf}
\usepackage{hyperref}
\usepackage{booktabs}
\usepackage{ifthen}
\usepackage{xspace}
\usepackage[pdftex]{graphicx}
\usepackage{multirow}
\usepackage{mathrsfs}
\usepackage{enumerate}
\usepackage{enumitem}
\usepackage{subfigure}
\usepackage{bm}

\usepackage{fancyhdr}
\setlength{\headsep}{15pt}
\pagestyle{fancy}

\fancyhf{}
\fancyhead[C]{\textit{ \nouppercase{\rightmark}} }
\fancyfoot[C]{--- {\thepage} ---}
\fancypagestyle{plain}{ %
  \fancyhf{} 
  
}
\makeatletter
\newcommand{\@KEYWORDS}{\@empty}
\newcommand{\KEYWORDS}[1]{\renewcommand{\@KEYWORDS}{#1}}
\newcommand{\makekeywords}{\begin{flushleft}{\footnotesize \textbf{\textit{Keywords ---}} \textit{\@KEYWORDS}}\end{flushleft}}
\makeatother

\usepackage[font=small]{caption}
\setlength{\abovecaptionskip}{-5pt}

\setlength{\itemsep}{0pt}

\newcounter{dummycounter}
\newcommand{\ii}[1]{\setcounter{dummycounter}{#1}(\roman{dummycounter})\xspace}

\newcommand{\R}{\mathbb R}

\newcommand{\N}{\mathbb N}

\newcommand{\set}[2][]{\ifthenelse{\equal{#1}{\empty}}{\lbrace #2 \rbrace}{\lbrace #2\vert\, #1\rbrace}}



\newcommand{\diag}{\operatorname{diag}}

\newcommand{\tensor}{\ensuremath{\otimes}}

\makeatletter
\def\INT{
  \@ifnextchar[
    {\INT@i}
    {\INT@i[]}
}
\def\INT@i[#1]{
  \@ifnextchar[
    {\INT@ii{#1}}
    {\INT@ii{#1}[]}
}
\def\INT@ii#1[#2]#3#4{
\ensuremath{\int_{#1}^{#2}{#3}\,\mathit d{#4}}
}
\makeatother

\newcommand{\seq}[2][]{(#2)_{#1}}






\newcommand{\rpartial}{\widehat\partial}

\newcommand{\ind}[1]{\delta_{#1}}
\newcommand{\eR}{\overline{\R}}
\newcommand{\Graph}{\operatorname{Graph}}
\newcommand{\dom}{\operatorname{dom}}
\newcommand{\epi}{\operatorname{epi}}

\newcommand{\abs}[1]{\vert #1 \vert}
\newcommand{\norm}[2][]{\Vert #2 \Vert_{#1}}

\newcommand{\scal}[2]{\left\langle #1,#2 \right\rangle}
\newcommand{\map}[3]{#1\colon #2 \to #3}
\newcommand{\smap}[3]{#1\colon #2 \rightrightarrows #3}
\newcommand{\fto}{\overset{f}{\to}}
\newcommand{\Fto}[1][]{\overset{#1}{\to}}


\newcommand{\argmin}[1]{\ensuremath{\underset{#1}{\operatorname{argmin}}\,}}

\newcommand{\st}{s.t.\xspace}

\newcommand{\dist}{\operatorname{dist}}



\newcommand{\spSob}[2][]{\ensuremath{H^{#2\ifthenelse{\equal{#1}{\empty}}{}{,#1}}}}

\usepackage{mathrsfs}
\newcommand{\cd}[1]{\ensuremath{C^{#1}}}




\renewcommand{\phi}{\varphi}
\newcommand{\eps}{\varepsilon}

\newcommand{\enquote}[1]{``#1''}

\renewcommand{\o}{\mathrm{o}}
\newcommand{\structO}{\mathcal O}

\newcommand{\opid}{\mathrm{id}}      

\definecolor{NoteColor}{rgb}{1,0.4,0}
\definecolor{SkyRed}{rgb}{.56,.06,.15}
\definecolor{dSkyBlue}{rgb}{.11,.2,.62}
\definecolor{bSkyBlue}{rgb}{.13,.73,.92}

\newcommand{\leadingzero}[1]{\ifnum #1<10 0\the#1\else\the#1\fi}
\newcommand{\todayIII}{\leadingzero{\day}/\leadingzero{\month}/\the\year}

\usepackage[capitalise]{cleveref}
\newcounter{THMCTR}
\setcounter{THMCTR}{0}
\newtheorem{THM}[THMCTR]{Theorem}
\newtheorem{PROP}[THMCTR]{Proposition}
\newtheorem{DEF}[THMCTR]{Definition}

\newtheorem{LEM}[THMCTR]{Lemma}
\newtheorem{COR}[THMCTR]{Corollary}

\newtheorem{ASS}{Assumption}

\newtheorem{ALG}{\textbf{\textup{Algorithm}}}
\theoremstyle{remark}
\newcounter{EXCTR}
\setcounter{EXCTR}{0}
\newtheorem{REM}[EXCTR]{Remark}

\crefname{THM}{Theorem}{Theorems}
\crefname{PROP}{Proposition}{Propositions}
\crefname{DEF}{Definition}{Definitions}
\crefname{DEFPROP}{DECIDE DEF OR PROP}{DECIDE DEF OR PROP}
\crefname{LEM}{Lemma}{Lemmas}
\crefname{COR}{Corollary}{Corollaries}
\crefname{REM}{Remark}{Remarks}
\crefname{PARA}{Paraphrase}{Paraphrases}
\crefname{ASS}{Assumption}{Assumptions}
\crefname{EX}{Example}{Examples}
\crefname{ALG}{Algorithm}{Algorithms}

\crefname{part}{Part}{Parts}
\crefname{chapter}{Chapter}{Chapters}
\crefname{section}{Section}{Sections}
\crefname{subsection}{Section}{Sections}
\crefname{subsubsection}{Section}{Sections}
\crefname{figure}{Figure}{Figures}
\crefname{table}{Table}{Tables}


\newcommand{\MARK}[2][]{%
\color{SkyRed}#2\ifthenelse{\equal{#1}{\empty}}{}{%
  \marginpar{\textcolor{dSkyBlue}{\ensuremath{\clubsuit} \large \textsc TODO}\\\color{dSkyBlue}\footnotesize#1\hspace*{\fill}\\\color{black}}%
  \textcolor{SkyRed}{\ensuremath{{}^\clubsuit}}%
  }\color{black}\xspace%
}




\newcommand{\Z}{\mathbb Z}
\newcommand{\n}{{n}}
\newcommand{\np}{{n+1}}
\newcommand{\nm}{{n-1}}
\renewcommand{\k}{{k}}
\newcommand{\kp}{{k+1}}

\newcommand{\X}{X}
\newcommand{\img}{w}
\newcommand{\mask}{c}
\newcommand{\edge}{z}
\newcommand{\noisy}{I}
\newcommand{\dimX}{N}
\newcommand{\spd}{\mathbb S_{++}}       

\newcommand{\pit}[1]{_{#1}}             
\newcommand{\iter}[1]{^{#1}}            
\newcommand{\setsep}{\,\vert\,}

\definecolor{miared}{rgb}{0.7,0.15,0}
\definecolor{miablue}{rgb}{0.058,0.047,.596} 
\definecolor{mygreen}{rgb}{0.058,.596,0.047}

\newcommand{\F}{\mathcal F}
\newcommand{\crit}{\operatorname{crit}}

\newcommand{\dimN}{N}
\newcommand{\dimJ}{J}
\newcommand{\dimM}{M}
\newcommand{\dimP}{P}
\newcommand{\bx}{\mathbf{x}}
\newcommand{\by}{\mathbf{y}}
\newcommand{\bA}{\mathbf{A}}
\newcommand{\bH}{H}
\newcommand{\bdelta}{\mathbf{\Delta}}
\newcommand{\ov}[1]{\overline{#1}}

\newcommand{\BCVMiPiano}{\texttt{BC-VM-iPiano}\xspace}
\newcommand{\VMiPiano}{\texttt{VM-iPiano}\xspace}
\newcommand{\BCiPiano}{\texttt{BC-iPiano}\xspace}
\newcommand{\iPiano}{\texttt{iPiano}\xspace}
\newcommand{\FB}{\texttt{FB}\xspace}
\newcommand{\BCFB}{\texttt{BC-FB}\xspace}
\newcommand{\VMFB}{\texttt{BC-FB}\xspace}
\newcommand{\BCVMFB}{\texttt{BC-VM-FB}\xspace}

\newcommand{\KL}{Kurdyka--{\L}ojasiewicz\xspace}
\newcommand{\Loj}{{\L}ojasiewicz\xspace}

\usepackage{tikz}

\graphicspath{{./figures/}}

\title{Unifying abstract inexact convergence theorems\\ 
       and
       block coordinate variable metric iPiano}

\author{Peter Ochs\\
  Mathematical Optimization Group\\
  Saarland University\\
  Germany\\
  {\tt\small ochs@math.uni-sb.de}
}
\date{\today}

\KEYWORDS{abstract convergence theorem, \KL inequality, descent method, relative errors, block coordinate method, variable metric method, inertial method, iPiano, inpainting, Mumford--Shah regularizer}

\begin{document}

\maketitle

\begin{abstract}
An abstract convergence theorem for a class of generalized descent methods that
explicitly models relative errors is proved. The convergence theorem
generalizes and unifies several recent abstract convergence theorems. It is
applicable to possibly non-smooth and non-convex lower semi-continuous
functions that satisfy the \KL (KL) inequality, which comprises a huge class of
problems. Most of the recent algorithms that explicitly prove convergence using
the KL inequality can cast into the abstract framework in this paper and, 
therefore, the generated sequence converges to a stationary point of the 
objective function.
Additional flexibility compared to related approaches is gained by a descent
property that is formulated with respect to a function that is allowed to
change along the iterations, a generic distance measure, and an
explicit/implicit relative error condition with respect to finite linear 
combinations of distance terms.

As an application of the gained flexibility, the convergence of a block
coordinate variable metric version of iPiano (an inertial forward--backward
splitting algorithm) is proved, which performs favorably on an inpainting
problem with a Mumford--Shah-like regularization from image processing. 
\end{abstract}

\makekeywords


\section{Introduction}

The \KL (KL) inequality is key for the convergence analysis for non-smooth and
non-convex optimization problems. {\L}ojasiewicz introduced an early version of
this inequality for analytic functions \cite{Loj63}, which was extended to more
general classes of smooth functions in \cite{Kurd98,Loj93,KP94} and to
non-smooth functions (that are definable in an $\o$-minimal structure
\cite{Dries98}) in \cite{BDL06,BDLS07}. While it was originally used to study
the asymptotic behavior of gradient-like systems
\cite{BDL06,HJ98,HT01,Lageman07} and PDEs \cite{CJ03,Simon83}, the KL
inequality is also used for numerical methods such as the gradient method
\cite{AMA05}, proximal methods \cite{AB09}, projection or alternating
minimization methods \cite{ABRS10,BCP10}. A unifying and concise formulation of
the key ingredients, which, combined with the KL inequality, lead to asymptotic
convergence to a critical point and a trajectory with finite length (the
accumulated distance between consecutive points of the sequence is finite) is
proposed by Attouch et al. \cite{ABS13} and further refined by Bolte et al.
\cite{BST14} using a uniformization result for the KL inequality. These early
developments revolutionized the study of numerical methods for non-smooth
non-convex optimization problems. \\

In this work, we continue the abstract unification of the convergence analysis
of algorithms for non-smooth non-convex optimization \cite{ABS13,BST14}. Their
convergence analysis is driven by two central assumptions: a \emph{sufficient
decrease condition} and a \emph{relative error condition}. While they use the
sufficient decrease condition on the objective function, \cite{OCBP14}
formulates conditions that apply to a global surrogate function of
Lyapunov-type, which allows the objective values also to increase locally. Note
that this idea is different from the majorization minimization principle
\cite{HL04}, where in each iteration a majorizer of the objective is
constructed and minimized, which usually leads to a descent of the actual
objective values. In the KL context, this algorithmic strategy was used in
\cite{BP16,ODBP15}, and led to another abstract convergence result in
\cite{BP16} alike \cite{ABS13}. The abstract conditions formulated in our paper
contains \cite{ABS13,BST14,BP16,OCBP14,ODBP15} as special instances.

The relative error condition is justified by the fact that most algorithms
require to solve subproblems for which possibly inexact approaches
are required. The condition reflects \emph{relative inexact optimality
conditions} \cite{ABS13}, and is related to \cite{IPS03,SS99,SS99b,SS01}.
In \cite{BP16} the relative error condition is of explicit nature (see 
also \cite{AMA05,Noll13}), whereas in \cite{ABS13,BST14,OCBP14} it is implicit.
The abstract convergence theorem in our paper comprises the explicit and the 
implicit formulation.

The sufficient decrease condition and the relative error condition depend
rather on the structure of the algorithm than on fine properties of the
objective function. Therefore, the parameters appearing in these conditions are
tightly linked to properties of the algorithm such as the step size. While the
abstract convergence conditions discussed so far rely on a constant choice of
these parameters, Frankel et al. \cite{FGP14} introduced a significantly more
flexible parameter setting into these conditions. As a result, an alternating
version of the variable metric forward--backward splitting algorithm is
formulated and its convergence is proved, which opens the door for  non-smooth
and non-convex version of the Levenberg--Marquardt algorithm. The conditions 
in our paper are formulated such that \cite{FGP14} appears as a special case.

Beyond the flexibility introduced in \cite{FGP14}, in this paper, \ii1 we allow 
for a parametric function for which the sufficient decrease condition is 
required. This allows the objective or any surrogate relative to which decrease
is measured can change along the iterations. We believe that this additional 
flexibility has significant potential, which in this paper is only rudimentary
explored in the context of an inertial variable metric method. \ii2 The relative 
error condition can be formulated with respect to a linear combination of 
finitely many distance terms, which seems to be essential for multi-step 
methods \cite{OCBP14,Ochs15,BCL15,LFP16}. Finally, \ii3 all distances and the 
decrease in \ii1 are formulated using abstract distances. Of course, unless 
there is a closer relation between the abstract distance measure and the 
Euclidean metric, we have to content ourselves with a weaker convergence
result. Nevertheless, we consider this as an essential step to generalize the
convergence results further; possibly to algorithms that use Bregman distances
\cite{Bregman67} without smoothness or strong convexity assumption. In the
present paper, we use the abstract distance measures to restrict the Euclidean
distance to blocks of coordinates, which leads (almost for free) to a block
coordinate version of the inertial variable metric method iPiano. Without the 
variable metric aspect, the block coordinate inertial method was already proposed in
\cite{PS16}, though as a result of a more explicit analysis. \\

So far, we focused on abstract convergence results for non-smooth non-convex
optimization problems. As mentioned above, there are many concrete algorithms
that are proved to converge in such a general setting using the abstract 
conditions or an explicit verification of the convergence following the lines
of the abstract convergence proof. 

Convergence of the \emph{gradient method} is proved in \cite{AMA05,ABS13}, and
has been extended to \emph{proximal gradient descent} (resp. forward--backward
splitting method) \cite{ABS13}, which applies to a class of problems that is
given as the sum of a (possibly non-smooth and non-convex) function and a
smooth (possibly non-convex) function. Accelerations by means of a
\emph{variable metric} are considered in \cite{CPR13,FGP14}, and in combination
with a line-search procedure in \cite{BLPPR16}. The convergence of
\emph{proximal methods} is inspected in \cite{AB09,ABS13,BDLM10,MP10}, and an
\emph{alternating proximal method} is considered in \cite{ABRS10}. Extensions
to \emph{block coordinate} methods are given, e.g. in \cite{ABS13} under the
name regularized Gauss--Seidel method, which is actually a variable metric
version of the block coordinate methods in \cite{ABRS10,Auslender92,GS99}. The
combination of the ideas of alternating proximal minimization and
forward--backward splitting can be found in \cite{BST14}, where the algorithm
is called \emph{proximal alternating linearized minimization (PALM)}. For an
extension that allows the metric to change in each iteration with a flexible
order of the block iterations we refer to \cite{CPR16}.
Convergence of a non-smooth subgradient method is studied in \cite{Noll13,Hosseini15}.

Another possibility to accelerate descent methods (instead of using a variable
metric) are so-called \emph{inertial methods}. In convex optimization, some
inertial or overrelaxation methods are known to be optimal \cite{Nest04}.
Although it is hard to obtain sharp lower complexity bounds in the non-convex
setting, hence to argue about optimal methods, experiments show a favorable
performance of inertial algorithms. In \cite{OCBP14} an extension of inertial
gradient descent (also known as \emph{Heavy-ball method} or gradient descent
with momentum), which includes an additional non-smooth term in the objective
function alike forward--backward splitting, is analyzed in the KL framework.
The proposed algorithm is called \emph{iPiano} and shows good performance in
applications. An earlier subsequential convergence proof of Polyak's Heavy-ball
method \cite{Polyak64} without the KL inequality for smooth non-convex
functions is proposed in \cite{ZK93}.  In \cite{Ochs15,BCL15} the original
problem class \enquote{non-smooth convex plus smooth non-convex} in
\cite{OCBP14} was extended to \enquote{non-smooth non-convex plus smooth
non-convex}. In \cite{BCL15} also (smooth and strongly convex) Bregman
proximity functions are used in the update step. See \cite{BC15} for a variant
of this algorithm. A block coordinate version of iPiano or an \emph{inertial 
variant of the proximal alternating linearized minimization} method was recently
proposed as iPALM in \cite{PS16}. A variable metric version of iPiano and iPALM
---\emph{block coordinate variable metric iPiano}---is proposed in this paper.  The
accelerated method in \cite{LL15} is based on an \emph{extrapolation} of the gradient
alike Nesterov's proximal gradient method instead of an inertial term.  Liang
et al. \cite{LFP16} pursue a unifying approach of the preceding methods by a
\emph{generic multi-step method}.  All of these inertial methods share the property
that the sufficient decrease condition holds for a Lyapunov function instead of
the actual objective function.

This concept is important beyond inertial methods. It is used to prove 
convergence of splitting methods for \emph{composite problems} \cite{LP15b}, 
\emph{Douglas--Rachford splitting} \cite{LP16} and \emph{Peaceman--Rachford 
splitting} \cite{LP15a} for non-convex optimization problems.\\

Section~\ref{sec:prelim} introduces the basic notation and results from
(non-smooth) variational analysis \cite{Rock98} and the \KL inequality.
Section~\ref{sec:abstr-conv} formulates the basic conditions for the abstract
convergence theorem, which is motivated by the results in
\cite{ABS13,FGP14,OCBP14,BST14}. The gained flexibility of the conditions is
compared to related work in Section~\ref{sec:rel-abstr-conv}, and further
discussed in Section~\ref{subsec:discuss-cond-perspectives} where also some
future perspectives are provided. Examples for the necessity of the
generalizations are given in Appendix~\ref{sec:appdx}. The convergence
under the abstract conditions is proved in Section~\ref{subsec:conv-ana-abstr}.
The flexibility that is gained is used in Section~\ref{sec:vm-iPiano} to prove
convergence of a variable metric version of iPiano \cite{OCBP14,Ochs15} and in
Section~\ref{sec:bcvm-iPiano} of a block coordinate variable metric version of
iPiano. Several block coordinate, variable metric, and inertial versions of
forward--backward splitting/iPiano are applied to an image inpainting problem
in Section~\ref{sec:num}, which emphasizes the importance of a variable metric
and block coordinate methods.

\section{Preliminaries} \label{sec:prelim}

\subsection{Notation and definitions}
Throughout this paper, we will always work in a finite dimensional Euclidean vector space $\R^\dimN$ of dimension $\dimN\in\N$, where $\N:=\{1,2,\ldots\}$. Define $\Z:=\set{\ldots,-1,0,1,\ldots}$. The vector space is equipped with the standard Euclidean norm $\norm\cdot := \norm[2]{\cdot}$ that is induced by the standard Euclidean inner product $\norm\cdot = \sqrt{\scal\cdot\cdot}$. If specified explicitly, we work in a metric induced by a symmetric positive definite matrix $A\in\spd(\dimN)\subset\R^{\dimN\times\dimN}$, represented by the inner product $\scal xy_A:= \scal{Ax}y$ and the norm $\norm[A]{x}:= \sqrt{\scal xx_A}$. For $A\in\spd(\dimN)$ we define $\varsigma(A)\in\R$ as the largest value that satisfies $\norm[A]{x}^2 \geq \varsigma (A) \norm[2]{x}^2$ for all $x\in\R^\dimN$.

As usual, we consider extended read-valued functions $\map f{\R^\dimN}{\eR}$,
$\eR:=\R\cup\set{+ \infty}$, that are defined on the whole space with
\emph{domain} given by $\dom f := \set{x\in\R^\dimN\vert\, f(x)< +\infty}$. A
function is called \emph{proper} if $\dom f\neq\emptyset$. We define the 
\emph{epigraph} of the function $f$ as $\epi f:=
\set{(x,\mu)\in\R^{\dimN+1}\vert\, \mu \geq f(x)}$. We will also need to
consider set-valued mappings $\smap{F}{\R^\dimN}{\R^\dimM}$ defined by the
\emph{graph} 
\[
  \Graph F := \set{(x,y) \in \R^\dimN\times\R^\dimM\vert\, y \in F(x)} \,,
\]
where the domain of a set-valued mapping is given by $\dom F:= \set{x \in \R^\dimN\vert\, F(x)\neq \emptyset}$. For a proper function $\map f{\R^\dimN}{\eR}$ we define the set of \emph{(global) minimizers} as 
\[
  \arg\min f := \arg\min_{x\in\R^\dimN} f := \set{x\in\R^\dimN\vert\, f(x) = \inf f}\,,\qquad \inf f := \inf_{x\in\R^\dimN} f(x) \,.
\]
The \emph{Fr\'echet subdifferential} of $f$ at $\bar x \in\dom f$ is the set $\rpartial f(\bar x)$ of those elements $v \in \R^\dimN$ such that
\[
  \liminf_{\substack{x\to \bar x\\ x\neq \bar x}} \frac{f(x) - f(\bar x) - \scal{v}{x-\bar x}}{\norm{x-\bar x}} \geq 0 \,.
\]
For $\bar x\not\in \dom f$, we set $\rpartial f(\bar x) = \emptyset$. 
For convenience, we introduce \emph{$f$-attentive convergence}: A sequence $\seq[n\in\N]{x^n}$ is said to \emph{$f$-converge} to $\bar x$ if 
\[
  x^n \to \bar x \quad\text{and}\quad f(x^n) \to f(\bar x) \quad\text{as } n\to\infty\,,
\]
and we write $x^n\fto \bar x$. The so-called \emph{(limiting) subdifferential} 
of $f$ at $\bar x\in\dom f$ is defined by
\[
  \partial f(\bar x) := \set{v\in \R^\dimN\vert\, \exists\, 
                          x^n \fto \bar x,\;v^n\in \rpartial f(x^n),\;v^n \to v} \,,
\]
and $\partial f(\bar x) = \emptyset$ for $\bar x \not\in \dom f$. A point 
$\bar x\in \dom f$ for which $0\in \partial f(\bar x)$ is a called a 
\emph{critical point} of \emph{stationary point}. As a direct consequence of
the definition of the limiting subdifferential, we have the following
closedness property: 
\[
  x^n \fto \bar x,\ v^n\to \bar v,\ \text{and for all } n\in\N\colon v^n \in \partial f(x^n)\quad \Longrightarrow\quad  \bar v\in \partial f(\bar x) \,.
\]
\cite[Ex. 8.8]{Rock98} shows that at a point $\bar x\in\R^\dimX$, for the sum
of an extended-valued function $g$ that is finite at $\bar x$ and a 
continuously differentiable (smooth) function $f$ around $\bar x$, it
holds that $\partial (g+f)(\bar x) = \partial g(\bar x) + \nabla f(\bar x)$.
Moreover for a function $\map f{\R^\dimN\times\R^\dimM}\eR$ with $f(x,y) =
f_1(x) + f_2(y)$ the subdifferential satisfies $\partial f(x,y) = \partial
f_1(x) \times \partial f_2(y)$ \cite[Prop. 10.5]{Rock98}.

Finally, the \emph{distance} of $\bar x\in\R^\dimN$ to a set $\omega\subset \R^\dimN$ as is given by $\dist(\bar x,\omega) := \inf_{x\in\omega}\, \norm{\bar x - x}$ and we introduce $\norm[-]{\partial f(\bar x)}:= \inf_{v\in\partial f(\bar x)} \norm{v} =\dist(0, \partial f(\bar x))$ what is known as the \emph{lazy slope} of $f$ at $\bar x$. Note that $\inf \emptyset := +\infty$ by definition. Furthermore, we have (see \cite{FGP14}):
\begin{LEM} \label{lem:lazy-slope-liminf}
If $x^\n \fto \bar x$  and $\liminf_{n\to\infty}\, \norm[-]{\partial f(x^\n)} = 0$, then $0\in \partial f(\bar x)$.
\end{LEM}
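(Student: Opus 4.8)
The plan is to reduce everything to the closedness property of the limiting subdifferential stated just above, namely that $x^n \fto \bar x$, $v^n \to \bar v$, and $v^n \in \partial f(x^n)$ for all $n$ together force $\bar v \in \partial f(\bar x)$. To apply it I need a sequence of subgradients $v^n \in \partial f(x^n)$ converging to $0$ along points that $f$-converge to $\bar x$; the hypotheses provide exactly the raw material for this, after passing to a subsequence.

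First I would use the assumption $\liminf_{n\to\infty} \norm[-]{\partial f(x^n)} = 0$ to extract a subsequence $(x^{n_k})_{k\in\N}$ with $\norm[-]{\partial f(x^{n_k})} \to 0$ as $k\to\infty$. In particular $\norm[-]{\partial f(x^{n_k})} = \dist(0, \partial f(x^{n_k})) < +\infty$ for all large $k$, which (recalling $\inf\emptyset := +\infty$) guarantees that $\partial f(x^{n_k})$ is nonempty; so the lazy slope is a genuine distance from $0$ to a nonempty set.

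Next, for each such $k$ I would choose a near-optimal element of the subdifferential: since $\norm[-]{\partial f(x^{n_k})} = \inf_{v \in \partial f(x^{n_k})} \norm{v}$, pick $v^{n_k} \in \partial f(x^{n_k})$ with $\norm{v^{n_k}} \le \norm[-]{\partial f(x^{n_k})} + \tfrac1k$. (If one prefers an exact minimizer, one may instead use that $\partial f(x^{n_k})$ is closed and nonempty in the finite-dimensional space $\R^\dimN$, so the distance to $0$ is attained; the approximate choice makes attainment unnecessary.) By construction $\norm{v^{n_k}} \to 0$, hence $v^{n_k} \to 0$.

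Finally I would invoke the closedness property along the subsequence. Since $x^n \fto \bar x$ for the whole sequence, the subsequence still satisfies $x^{n_k} \fto \bar x$, and we have produced $v^{n_k} \in \partial f(x^{n_k})$ with $v^{n_k} \to 0$; closedness of $\partial f$ then yields $0 \in \partial f(\bar x)$, which is the claim. The only point requiring care is the passage to a subsequence — the $\liminf$ hypothesis controls the lazy slope only along a subsequence, not the full index set — together with the nonemptiness and near-attainment of the infimum; once these are handled, the conclusion is an immediate application of the stated closedness of the limiting subdifferential.
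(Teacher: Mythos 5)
Your proof is correct and, in effect, supplies the argument the paper omits: the paper states this lemma without proof, citing \cite{FGP14}, and your route---extracting a subsequence along which the lazy slope tends to $0$, noting $\partial f(x^{n_k})\neq\emptyset$ because $\inf\emptyset:=+\infty$, selecting near-minimizing $v^{n_k}\in\partial f(x^{n_k})$ with $\norm{v^{n_k}}\to 0$, and applying the stated closedness of the limiting subdifferential under $f$-attentive convergence along the subsequence---is exactly the standard argument behind that citation. Your points of care (the $\liminf$ only controls a subsequence; the infimum need not be attained, though closedness of $\partial f(x)$ in $\R^\dimN$ would give attainment) are the right ones, and nothing is missing.
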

For a function $f$, we use the notation 
$[f<\mu]:= \set{x\in \R^\dimN\vert\, f(x)< \mu}$. Analogously, we use the same
notation for other conditions, for example, $[f\geq \mu]$, $[f=1]$, etc.

\subsection{The \KL property}

\begin{DEF}[\KL property / KL property]\label[DEF]{def:KL-property}
Let $\map f {\R^\dimN}{\eR}$ be an extended real valued function and let $\bar x\in\dom\partial f$. If there exists $\eta\in(0,\infty]$, a neighborhood $U$ of $\bar x$ and a continuous concave function $\map{\phi}{[0,\eta)}{\R_+}$ such that 
\[
  \phi(0)=0,\quad  \phi\in\cd 1( (0,\eta) ),\quad\text{and}\quad \phi^\prime(s)>0\text{ for all }s\in (0,\eta),
\]
and for all $x\in U\cap [f(\bar x) < f(x) < f(\bar x) + \eta]$ the \KL inequality
\begin{equation}\label{eq:KL-ineq}
  \phi^\prime(f(x)-f(\bar x)) \norm[-]{\partial f(x)} \geq 1
\end{equation}
holds, then the function has the \KL property at $\bar x$.

If, additionally, the function is lower semi-continuous and the property holds for each point in $\dom \partial f$, then $f$ is called a \KL function.
\end{DEF} 
Figure~\ref{fig:ex-KL-a}, which is taken from \cite{Ochs15}, shows the idea and the variables appearing in the definition of the KL property for a smooth function. For smooth functions (assume $f(\bar x)=0$), \eqref{eq:KL-ineq} reduces to $\norm{\nabla (\phi\circ f)} \geq 1$ around the point $\bar x$, which means that after reparametrization with a \emph{desingularization function} $\phi$ the function is sharp. \enquote{Since the function $\phi$ is used here to turn a singular region---a region in which the gradients are arbitrarily small---into a regular region, i.e. a place where the gradients are bounded away from zero, it is called a desingularization function for $f$.} \cite{ABS13}. It is easy to see that the KL property is satisfied for all non-stationary points \cite{ABRS10}.
\begin{figure}[t]
\begin{center}
  \newcommand{\thisTikzScaling}{1.0}
\begin{tikzpicture}[scale=\thisTikzScaling]
\def\mycoordinatesystem{
  \begin{scope}[thick]
  \draw[-latex] (-0.5,0) -- (4.5,0);
  \draw[-latex] (0,-0.5) -- (0,3.5);
  \end{scope}
}
  \mycoordinatesystem

  \def\tikzF#1{0.5*(#1)^2}

  \begin{scope}[shift={(2.5,1)}]
    \draw[domain=-2.2:-1.9,smooth,variable=\x,samples=200,thick] plot ({\x},{\tikzF{\x}});
    \draw[domain=-1.6:2.2,smooth,variable=\x,samples=200,thick] plot ({\x},{\tikzF{\x}});
    \node[right] at (2,{\tikzF{2}}) {$f$};
    \begin{scope}[thick, semitransparent]
      \draw[-latex] (-2,0) -- (2,0);
      \draw[-latex] (0,-0.5) -- (0,1.5);
      \node[left] at (0,1.5) {$f(x)-f(\bar x)$};
    \end{scope}

    \node[below left] at (-0.3,-0.3) {$(\bar x, f(\bar x))$};
    \draw[<-,>=stealth,shorten <=2pt] (0,0) to[bend left=15] (-0.45,-0.5);

    \draw[very thick,blue!50,dashed] (-1.7,0) -- (1.7,0);
    \node[left,blue!50] at (-2,-0.3) {$U$};
    \draw[->,>=stealth,blue!50,shorten >=1.5pt] (-2.1,-0.3) to[bend right=25](-1.5,0);
    \draw[very thick,blue!80!black] ({-sqrt(1.4)},0) -- ({sqrt(1.4)},0);
    \draw[thick,blue!80!black,fill=white] (0,0) circle [radius=1.5pt];
    \node[below right,blue!80!black] at (0,-1.3) {$U\cap [f(\bar x) < f(x) < f(\bar x) +\eta]$};
    \draw[->,>=stealth,blue!80!black,shorten >=1.5pt] (0.5,-1.3) to[bend left=15](0.5,0);

    \draw[magenta,dashed] (2,0.7) -- (-2,0.7) node[left] {$f(\bar x) + \eta\ $};

    \draw[domain=0:2,smooth,variable=\x,samples=200,thick,dashed,green!70!black] plot ({\x},{sqrt(\x)});
    \node[below right,green!70!black] at (2,{sqrt(2)}) {$\varphi$};
  \end{scope}
  
  \draw[->,>=stealth] (5,1.5) to[bend left=15] (7,1.5);

  \begin{scope}[shift={(9.5,1)}]
    \draw[domain=-1.6:2.0,smooth,variable=\x,samples=200,thick] plot ({\x},{1.2*sqrt(\tikzF{\x})});
    \node[right] at (2,{1.2*sqrt(\tikzF{2})}) {$\varphi\circ f$};
    \begin{scope}[thick, semitransparent]
      \draw[-latex] (-2,0) -- (2,0);
      \draw[-latex] (0,-0.5) -- (0,1.5);
      \node[below] at (2,0) {$x$};
    \end{scope}
    
    \draw[very thick,blue!80!black] ({-sqrt(1.4)},0) -- ({sqrt(1.4)},0);
    \draw[thick,blue!80!black,fill=white] (0,0) circle [radius=1.5pt];
    \draw[->,>=stealth,blue!80!black,shorten >=1.5pt] (-1.45,-1.5) to[bend right=15](0.5,0);

    \node[below left] at (-0.3,-0.3) {$(\bar x, f(\bar x))$};
    \draw[<-,>=stealth,shorten <=2pt] (0,0) to[bend left=15] (-0.45,-0.5);

  \end{scope}

\end{tikzpicture}
\end{center}
\caption{\label{fig:ex-KL-a}Example of the KL property for a smooth function. The composition $\phi\circ f$ has a slope of magnitude $1$ except at $\bar x$. }
\end{figure}

The KL property is satisfied by a large class of functions, namely functions that are definable in an $\o$-minimal structure (see \cite[Thm. 14]{ABRS10} and \cite[Thm. 14]{BDLS07}).
\begin{THM}[Nonsmooth \KL inequality for definable functions] \label[THM]{thm:nonsmooth-KL}
  Any proper lower semi-continuous function $\map{f}{\X}{\eR}$ which is definable in an $\o$-minimal structure $\structO$ has the \KL property at each point of $\dom \partial f$. Moreover the function $\phi$ in \cref{def:KL-property} is definable in $\structO$.
\end{THM}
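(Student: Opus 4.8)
The plan is to build a desingularizing function $\phi$ explicitly out of the profile of the lazy slope $\norm[-]{\partial f(x)}$ along the level sets of $f$, and to extract every property it needs (definability, positivity, finiteness, concavity) from the tameness of the $\o$-minimal structure $\structO$. Throughout I translate so that $\bar x = 0$ and $f(\bar x) = 0$ (legitimate, since $\bar x\in\dom\partial f\subseteq\dom f$ makes $f(\bar x)$ finite) and work in a small ball $U=B(\bar x,\eps)$. The first point to secure is that $x\mapsto\norm[-]{\partial f(x)}=\dist(0,\partial f(x))$ is definable in $\structO$: the Fr\'echet subdifferential $\rpartial f$ has a definable graph because $v\in\rpartial f(x)$ is a first-order condition (the $\liminf$ inequality) over the definable datum $f$, and $\o$-minimal structures are closed under quantification; the limiting subdifferential $\partial f$ is obtained from $\rpartial f$ by an $f$-attentive outer limit, i.e.\ a closure and projection, which preserve definability, so $\Graph\partial f$ is definable and the parametric infimum $\dist(0,\partial f(x))$ stays definable.

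Next I would study the \emph{slope profile}
\[
  m(s) := \inf\,\{\norm[-]{\partial f(x)} : x\in U,\ f(x)=s\}\,,
\]
which, as a parametric infimum of a definable family, is definable (with the convention $\inf\emptyset=+\infty$). By the Monotonicity Theorem I may shrink to an interval $(0,\eta)$ on which $m$ is continuous and monotone. Positivity of $m$ there is where a nonsmooth Sard-type theorem enters: if $m$ vanished on a subinterval, definable curve selection would yield, at each such level $s$, a point with $0\in\partial f$ and value $s$, producing a whole interval of critical values and contradicting the finiteness of the set of critical values of a definable function. After this shrinking $m>0$ is continuous and monotone on $(0,\eta)$; in the only nontrivial case $m(s)\dto 0$ as $s\dto 0$, and then monotonicity forces $m$ to be non-decreasing (if instead $\liminf_{s\to0}m(s)>0$ the slope is bounded below and the desingularization is essentially linear).

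The crux, and the step I expect to dominate the work, is the finiteness of $\int_0^\eta ds/m(s)$, which carries all the quantitative content. I would establish it via the \enquote{valley} (talweg): by definable choice pick a bounded definable curve $s\mapsto\theta(s)\in U\cap[f=s]$ nearly realizing $m$, so that $\norm[-]{\partial f(\theta(s))}\leq\const\,m(s)$. Differentiating the identity $f(\theta(s))=s$ and using Cauchy--Schwarz gives $1=\scal{v(s)}{\theta'(s)}\leq\norm{v(s)}\,\norm{\theta'(s)}$ for a near-minimal $v(s)\in\partial f(\theta(s))$, hence $\norm{\theta'(s)}\geq 1/(\const\,m(s))$. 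Integrating, the length of $\theta$ dominates $\const^{-1}\int_0^\eta ds/m(s)$. The decisive tameness input is that a \emph{bounded definable curve has finite length}: it decomposes into finitely many monotone $\cd1$ arcs, so it is rectifiable. This yields $\int_0^\eta ds/m(s)<\infty$ — an estimate that is simply false for arbitrary functions and is exactly what $\o$-minimality rescues.

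With the integral finite, set $\phi(t):=\int_0^t ds/m(s)$: then $\phi(0)=0$, $\phi\in\cd1((0,\eta))$ with $\phi'(s)=1/m(s)>0$, and $\phi$ is concave because $m$ is non-decreasing. For any $x\in U$ with $0<f(x)<\eta$, the definition of $m$ gives $\norm[-]{\partial f(x)}\geq m(f(x))$, whence $\phi'(f(x))\,\norm[-]{\partial f(x)}\geq m(f(x))/m(f(x))=1$, which is precisely the \KL inequality \eqref{eq:KL-ineq}. For the \enquote{moreover} clause, one only needs some definable $\phi$ with $\phi'\geq 1/m$, so $\phi$ can be taken definable in $\structO$ by dominating the definable integrand $1/m$ by a definable function whose primitive remains in $\structO$. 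To summarize the difficulty: Step~three (finiteness via finite length of bounded definable curves, resting on the Monotonicity and Cell-Decomposition theorems) is the heart of the matter, with the definability of the limiting subdifferential and the nonsmooth Sard theorem as the secondary, but still nontrivial, obstacles.
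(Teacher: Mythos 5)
The paper itself does not prove this theorem: it imports it verbatim from \cite[Thm.~14]{ABRS10} and \cite[Thm.~14]{BDLS07}, where the proof runs through a Whitney $\cd p$-stratification of $f$ and the projection formula (the projection of any subgradient at $x$ onto the tangent space of the stratum through $x$ equals the gradient of the smooth restriction $f|_S$), thereby reducing the nonsmooth inequality to Kurdyka's smooth result \cite{Kurd98} on finitely many strata. You instead transplant Kurdyka's talweg argument directly to the limiting subdifferential. Your preparatory steps are sound (definability of $\Graph \rpartial f$ and $\Graph \partial f$, hence of the profile $m$; monotonicity; finite length of bounded definable curves), but the crux step has a genuine gap: the identity $1=\scal{v(s)}{\theta^\prime(s)}$ for a near-minimal $v(s)\in\partial f(\theta(s))$ is unjustified. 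For a \emph{Fr\'echet} subgradient $v\in\rpartial f(\theta(s))$ a two-sided difference quotient along the curve does give $\scal{v}{\theta^\prime(s)}=1$, because the defining inequality of $\rpartial f$ holds at $\theta(s)$ itself; but the lazy slope $\norm[-]{\partial f(\theta(s))}$ is an infimum over the \emph{limiting} subdifferential, whose near-minimal elements are only limits of Fr\'echet subgradients at nearby points with nearby values, and no chain rule along a fixed curve survives this limit for a merely lsc function. To close this you must either prove that the Fr\'echet and limiting slope profiles coincide on $(0,\eta)$ (via continuity and monotonicity of the definable profile together with $f$-attentive approximation) and route the talweg through Fr\'echet points, or invoke the stratification/projection-formula machinery --- which is exactly what the cited proof does. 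Your Sard step leaks in the same place: curve selection at a level $s$ with vanishing slope yields $x(\tau)\to\bar x$ with $f(x(\tau))=s$, but lower semicontinuity only gives $f(\bar x)\leq s$; if $f(\bar x)<s$ the convergence is not $f$-attentive and $0\in\partial f(\bar x)$ does not follow, so no critical value at level $s$ is produced. The definable nonsmooth Sard theorem is itself proved via stratification in \cite{BDLS07}.

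The second gap is the \enquote{moreover} clause. $\o$-minimal structures are not closed under antidifferentiation --- already in the semialgebraic structure the primitive of the semialgebraic integrand $1/\sqrt{1+s^3}$ is an elliptic integral, hence not semialgebraic --- so $\phi(t)=\int_0^t \mathit{ds}/m(s)$ in general leaves $\structO$, even though it proves the inequality itself. Your proposed repair, dominating $1/m$ by a definable function \enquote{whose primitive remains in $\structO$}, is precisely the nontrivial lemma that Kurdyka supplies with a dedicated construction; it cannot be waved through. (In polynomially bounded structures, preparation gives $m(s)\sim c\,s^\alpha$ with $\alpha<1$ by integrability, and $t\mapsto t/m(t)$ works up to a constant; the general $\o$-minimal case needs the full argument of \cite{Kurd98}.) So: right architecture for the inequality modulo the Fr\'echet-versus-limiting chain rule, but the definability of $\phi$ is asserted rather than proved.
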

In particular, semi-algebraic and globally subanalytic sets and functions are
definable in such a structure. There is even an $\o$-minimal structure that
extends the one of globally subanalytic functions with the exponential function
(thus also the logarithm is included) \cite{Wilkie96,Dries98}. In fact, 
$\o$-minimal structures can be seen as an axiomatization of
the nice properties of semi-algebraic functions, and are therefore designed
such that the structure is preserved under many operations, for example,
pointwise addition and multiplication, composition and inversion. A brief
summary of the concepts that are important for this paper can be found in
\cite{ABRS10}.

Before we introduce the general framework and the convergence analysis in the next sections, let us first consider a so-called \emph{uniformization results}, which was proved in \cite{AB09} for the \Loj property and adjusted in \cite{BST14} for the KL property. Its main implication for this paper---like in \cite{BST14}---is that it allows for a direct proof of the main convergence theorem without the need of an induction argument.
\begin{LEM}[Uniformization result \cite{BST14}] \label{lem:uniformization}
  Let $\omega$ be a compact set and let $\map{f}{\R^d}{\eR}$ be a proper and lower semi-continuous function. Assume that $f$ is constant on $\omega$ and satisfies the KL property at each point of $\omega$. Then, there exist $\eps>0$, $\eta>0$, and a continuous concave function $\map{\phi}{[0,\eta)}{\R_+}$ such that 
\[
  \phi(0)=0,\quad  \phi\in\cd 1( (0,\eta) ),\quad\text{and}\quad \phi^\prime(s)>0\text{ for all }s\in (0,\eta),
\]
  such that for all $\bar x\in\omega$ and all $x$ in the following intersection 
  \begin{equation}\label{eq:uniformized-KL-set}
    [\dist(x,\omega) < \eps] \cap [f(\bar x) < f(x) < f(\bar x) + \eta]
  \end{equation}
  one has,
  \[
    \phi^\prime(f(x) - f(\bar x)) \norm[-]{\partial f(x)} \geq 1\,.
  \]
\end{LEM}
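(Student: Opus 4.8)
The plan is to localize using the pointwise \KL property, pass to a finite subcover by compactness, and then glue the finitely many local desingularization functions into a single admissible one by summation. The decisive structural fact I would use at the outset is that $f$ is constant on $\omega$: writing $f\equiv c$ on $\omega$, every target interval $[f(\bar x)<f(x)<f(\bar x)+\eta]$ appearing in the conclusion is the \emph{same} interval $[c<f(x)<c+\eta]$, independent of the base point $\bar x\in\omega$. This level-alignment is precisely what lets a genuinely uniform statement hold, and it is the reason the hypothesis of constancy cannot be dropped.

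First I would invoke \cref{def:KL-property} at each $u\in\omega$: it produces $\eta_u>0$, a neighborhood $U_u$ of $u$, and a desingularization function $\map{\phi_u}{[0,\eta_u)}{\R_+}$ (continuous, concave, $\cd 1$ on $(0,\eta_u)$, vanishing at $0$, with positive derivative) such that \eqref{eq:KL-ineq} holds on $U_u\cap[c<f(x)<c+\eta_u]$. Shrinking each $U_u$, I fix $\rho_u>0$ with the open ball $B(u,\rho_u)\subseteq U_u$. The balls $\{B(u,\rho_u/2)\}_{u\in\omega}$ cover the compact set $\omega$, so I extract a finite subcover $B(u_1,\rho_1/2),\dots,B(u_p,\rho_p/2)$, abbreviating $\rho_i:=\rho_{u_i}$, $\eta_i:=\eta_{u_i}$, $\phi_i:=\phi_{u_i}$. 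I then set
\[
  \eta:=\min_{1\le i\le p}\eta_i,\qquad \eps:=\tfrac{1}{2}\min_{1\le i\le p}\rho_i,\qquad \phi:=\sum_{i=1}^{p}\phi_i\Big|_{[0,\eta)}\,.
\]

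It remains to verify the three ingredients. For the covering step: if $\dist(x,\omega)<\eps$, choose $\bar y\in\omega$ with $\norm{x-\bar y}<\eps$ and an index $i$ with $\bar y\in B(u_i,\rho_i/2)$; the triangle inequality gives $\norm{x-u_i}<\eps+\rho_i/2\le\rho_i$, so $x\in B(u_i,\rho_i)\subseteq U_{u_i}$. For admissibility of $\phi$: a finite sum of continuous, concave, $\cd 1$ functions that vanish at $0$ and have strictly positive derivative is again of this type, and crucially $\phi'=\sum_j\phi_j'\ge\phi_i'$ pointwise for every $i$, because each summand is nonnegative. For the \KL inequality itself: given $\bar x\in\omega$ and $x$ in the intersection \eqref{eq:uniformized-KL-set}, the covering step yields $i$ with $x\in U_{u_i}$; since $f(\bar x)=c$ and $\eta\le\eta_i$, the point $x$ lies in $U_{u_i}\cap[c<f(x)<c+\eta_i]$, whence the local inequality gives $\phi_i'(f(x)-c)\,\norm[-]{\partial f(x)}\ge 1$. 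Using $\phi'(f(x)-c)\ge\phi_i'(f(x)-c)$ and multiplying by $\norm[-]{\partial f(x)}\ge 0$ (the case $\norm[-]{\partial f(x)}=+\infty$ being trivial since $\phi'>0$) yields $\phi'(f(x)-f(\bar x))\,\norm[-]{\partial f(x)}\ge 1$, as required.

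The compactness reduction is entirely routine, so the only genuinely delicate point is the gluing: I expect the main obstacle to be choosing an aggregation of the local $\phi_i$ that simultaneously preserves concavity and the $\cd 1$/positive-derivative structure \emph{and} dominates each $\phi_i'$ on the common domain. Summation accomplishes exactly this (a pointwise maximum or infimum would threaten $\cd 1$-regularity or the needed domination), and it works only after the constancy of $f$ on $\omega$ has collapsed all the local thresholds to the single level $c$.
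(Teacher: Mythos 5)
Your proof is correct and is essentially the argument of \cite{BST14} (Lemma~6 there), which this paper cites instead of reproducing: cover $\omega$ by the finitely many KL neighborhoods via compactness, take $\eta=\min_i\eta_i$ and $\eps$ from the halved radii, and glue with $\phi=\sum_i\phi_i$, using $\phi'\ge\phi_i'$ and the constancy of $f$ on $\omega$ to align all level intervals. All the delicate points (the triangle-inequality covering step, admissibility of the summed desingularization function, and the trivial case $\norm[-]{\partial f(x)}=+\infty$) are handled correctly, so there is nothing to fix.
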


\section{An abstract inexact convergence theorem} \label{sec:abstr-conv}

In this section, let $\map{\F}{\R^\dimN\times \R^\dimP}{\eR}$ be a proper,
lower semi-continuous function that is bounded from below. We analyze
convergence of an abstract algorithm that generates a sequence
$\seq[n\in\N]{x^n}$ in $\R^\dimN$ under the following realistic assumptions.
Many algorithms, such as the gradient descent method, forward--backward
splitting, alternating projection, proximal minimization, Heavy-ball method,
iPiano, and many more methods satisfy these assumption. An application to
block coordinate and variable metric iPiano is presented in 
Sections~\ref{sec:vm-iPiano} and~\ref{sec:bcvm-iPiano}.
\setcounter{ASS}{7}
\begin{ASS}\label{ass:Hs}
  Let $\seq[\n\in\N]{u^\n}$ be a sequence of parameters in $\R^\dimP$, and let
  $\seq[n\in\N]{\eps_n}$ be an $\ell_1$-summable sequence of non-negative real
  numbers. Moreover, we assume there are sequences $\seq[n\in\N]{a_n}$,
  $\seq[n\in\N]{b_n}$, and $\seq[\n\in\N]{d_\n}$ of non-negative real numbers,
  a non-empty finite index set $I\subset\Z$ and $\theta_i\geq 0$, $i\in I$, with 
  $\sum_{i\in I}\theta_i = 1$ such that the following holds:
\begin{enumerate}[label=(H\arabic*),ref=(H\arabic*)]
\item\label{ass:Hs:descent} (Sufficient decrease condition) 
      For each $n\in \N$, it holds that
\[
     \F(x\iter\np,u\iter\np) + a\pit\n d\pit\n^2 \leq  \F(x\iter\n,u\iter\n)\,.
\]
\item\label{ass:Hs:error} (Relative error condition) 
      For each $n\in\N$, the following holds: (set $d\pit{j}=0$ for $j\leq0$)
\[
    b\pit{\np} \norm[-]{\partial \F(x\iter\np,u\iter\np) } 
    \leq b \sum_{i\in I} \theta\pit{i}d\pit{\np-i} + \eps\pit{n+1} \,.
\]
\item\label{ass:Hs:cont} (Continuity condition) 
      There exists a subsequence $\seq[j\in\N]{(x\iter{n_j},u\iter{n_j})}$ and 
      $(\tilde x,\tilde u)\in\R^\dimN\times\R^\dimP$ such that 
\[
  (x\iter{n_j},u\iter{n_j}) \Fto[\F] (\tilde x,\tilde u)\quad \text{as}
  \quad j\to\infty \,.
\]
\item\label{ass:Hs:distance} (Distance condition) 
      It holds that
\[
  d\pit\n \to 0 \Longrightarrow \norm[2]{x\iter\np-x\iter\n} \to 0 
  \qquad\text{and}\qquad
  \exists \n^\prime\in\N\colon \forall \n\geq \n^\prime \colon d\pit\n = 0 
  \Longrightarrow 
  \exists \n^{\prime\prime}\in\N\colon \forall \n\geq \n^{\prime\prime} \colon 
      x\iter\np=x\iter\n 
\]
\item\label{ass:Hs:params} (Parameter condition) 
      It hold that
 \[
 \seq[\n\in\N]{b\pit\n}\not\in\ell_1\,, \quad  
 \sup_{n\in\N} \frac 1{b\pit\n a\pit\n} < \infty\,, \quad 
 \inf_\n a\pit\n =: \underline a > 0\,.
\]
\end{enumerate}
\end{ASS}
Let us first discuss how these assumptions generalize previous results and 
what are the perspectives of the newly gained flexibility. The convergence 
of the sequence $\seq[\n\in\N]{x\iter\n}$ is proved in 
Theorem~\ref{thm:KL-theorem-descent}. 

\subsection{Relation to other abstract convergence conditions} \label{sec:rel-abstr-conv}

The following works explicitly formulate abstract conditions that are used 
in specific algorithms. Examples of algorithms for which the generalizations are
necessary are provided in Appendix~\ref{sec:appdx}. 

\paragraph{Relation to \cite{ABS13}.} For a proper lower semi-continuous function $\map f{\R^N}{\eR}$
and a sequence $\seq[\n\in\N]{x^\n}$, the conditions in \cite{ABS13} are the 
following:
{\it
\begin{enumerate}[label=(ABS13-H\arabic*),ref=(ABS13-H\arabic*),%
                  leftmargin=*,align=left]
\item\label{ass:ABS13-Hs:descent} For each $\n\in \N$, 
  $f(x\iter\np) + a\norm[2]{x\iter\np - x\iter\n}^2 \leq f(x\iter\n)$\,.
\item\label{ass:ABS13-Hs:error} For each $\n\in\N$, the exists 
  $w\iter\np\in\partial f(x\iter\np)$ such that 
  $\norm{w\iter\np} \leq b\norm[2]{x\iter\np - x\iter\n}$.
\item\label{ass:ABS13-Hs:cont} There exists a subsequence 
  $\seq[j\in\N]{x\iter{\n_j}}$ and $\tilde x$ such that 
  $x\iter{\n_j} \to \tilde x$ and $f(x\iter{\n_j}) \to f(\tilde x)$ 
  as $j\to \infty$.
\end{enumerate}
}
If the conditions \ref{ass:ABS13-Hs:descent}--\ref{ass:ABS13-Hs:cont} hold, 
then also Assumption~\ref{ass:Hs} is satisfied, which shows that our result is 
more general. The relation is explicitly shown by setting
$\F(x\iter\n,u\iter\n) = f(x\iter\n)$, $u\iter\n=0$, $a\pit\n = a\in\R$, 
$b\pit\n=1$, $I=\set{1}$, $\theta_1=1$, $\eps\pit\n=0$ for all $\n\in\N$, and 
$d\pit\n = \norm[2]{x\iter\np-x\iter\n}$. 

\paragraph{Relation to \cite{FGP14}.} In \cite{FGP14}, the conditions in
\cite{ABS13} are generalized to a flexible parameter setting and 
Hilbert spaces. In $\R^N$, the conditions read as follows:
{\it
\begin{enumerate}[label=(FGP14-H\arabic*),ref=(FGP14-H\arabic*),%
                  leftmargin=*,align=left]
\item\label{ass:FGP14-Hs:descent} For each $\n\in \N$, for some $a\pit\n>0$,
  $f(x\iter\np) + a_n\norm[2]{x\iter\np - x\iter\n}^2 \leq f(x\iter\n)$\,.
\item\label{ass:FGP14-Hs:error} For each $\n\in\N$, for some $b\pit\np>0$ and 
  $\eps\pit\np\geq0$, 
  $b\pit\np\norm[-]{\partial f(x\iter\np)} 
    \leq \norm[2]{x\iter\np - x\iter\n} + \eps\pit\np$.
\item\label{ass:FGP14-Hs:cont} The sequences 
  $\seq[\n\in\N]{a\pit\n}$, $\seq[\n\in\N]{b\pit\n}$, $\seq[\n\in\N]{\eps\pit\n}$
  satisfy
  \[
    a\pit\n \geq \underline a > 0\ \;\text{for all}\;\  \n\in\N\,,\quad
    \seq[\n\in\N]{b\pit\n}\not\in\ell_1\,, \quad  
    \sup_{n\in\N} \frac 1{b\pit\n a\pit\n} < \infty\,, \quad \text{and}\quad
    \seq[\n\in\N]{\eps\pit\n}\in\ell_1\,.
  \]
\end{enumerate}
}
The continuity condition \ref{ass:ABS13-Hs:cont} is replaced by a 
$f$-precompactness assumption. The fact that Assumption~\ref{ass:Hs} is a 
generalization of these conditions follows immediately from
the relation to \cite{ABS13} and the design of our parameters 
$\seq[\n\in\N]{a\pit\n}$, $\seq[\n\in\N]{b\pit\n}$, $\seq[\n\in\N]{\eps\pit\n}$,
in analogy to those in \cite{FGP14}. Our relative error condition 
\ref{ass:Hs:error} and distance condition~\ref{ass:Hs:distance} are more general
and we allow for a second argument in the objective function $u^\n$ whose
convergence is not sought in the end, i.e., we allow for a controlled change of
the objective function along the iterations.

\paragraph{Relation to \cite{BP16}.} The abstract convergence statement
\cite[Proposition 4]{BP16}, poses conditions 
on a triplet of points $\set{x\iter\nm,x\iter\n,x\iter\np}$ and a function 
$\map{f}{\R^N}{\eR}$. The conditions are the following\footnote{We neglect the 
dependence of $b$ in \ref{ass:BP16-Hs:error} on the compact set that 
contains $x\iter\n$, as this set will be chosen to be the KL-neighborhood
of the set of limit points, which fixes the parameter for sufficiently large
$\n$.}:
{\it
\begin{enumerate}[label=(BP16-H\arabic*),ref=(BP16-H\arabic*),%
                  leftmargin=*,align=left]
\item\label{ass:BP16-Hs:descent} For each $\n\in \N$, 
  $f(x\iter\n) + a\norm[2]{x\iter\np - x\iter\n}^2 \leq f(x\iter\nm)$\,.
\item\label{ass:BP16-Hs:error} For each $\n\in\N$, 
  $\norm[-]{\partial f(x\iter\n)} \leq b \norm[2]{x\iter\np - x\iter\n}$.
\item\label{ass:BP16-Hs:cont} There exists a subsequence 
  $\seq[j\in\N]{x\iter{\n_j}}$ and $\tilde x$ such that 
  $x\iter{\n_j} \to \tilde x$ and $f(x\iter{\n_j}) \to f(\tilde x)$ 
  as $j\to \infty$.
\end{enumerate}
}
In contrast to \ref{ass:ABS13-Hs:error} and \ref{ass:FGP14-Hs:error}, the 
relative error condition \ref{ass:BP16-Hs:error} is explicit (like in 
\cite{AMA05} or more explicitly discussed in \cite[Section 2.4]{Noll13}), i.e., 
$x\iter\np$ does not appear inside the subdifferential estimate. Setting 
$d\pit\n = \norm[2]{x\iter{\n+2} - x\iter\np}$, $I=\set{1}$, 
$\theta_1=1$, $a\pit\n=a\in\R$, $b\pit\n=1$, $\eps\pit\n=0$, $u\iter\n=0$ and
$\F(x\iter\n,u\iter\n) = f(x\iter\n)$ in Assumption~\ref{ass:Hs}
recovers the conditions \ref{ass:BP16-Hs:descent}--\ref{ass:BP16-Hs:cont}.
Note that the definition of $d\pit\n$ does not conflict with \ref{ass:Hs:distance}.

\paragraph{Relation to \cite{OCBP14}.} The abstract convergence theorem of 
\cite{OCBP14} applies to a sequence $\seq[\n\in\N]{z\iter\n}$ given by 
$z\iter\n =(x\iter\n,x\iter\nm)$ with a sequence $\seq[\n\in\N]{x\iter\n}$ 
in $\R^\dimX$ for a function $\map f{\R^{2\dimX}}{\eR}$. The conditions
are the following:
{\it
\begin{enumerate}[label=(OCBP14-H\arabic*),ref=(OCBP14-H\arabic*),%
                  leftmargin=*,align=left]
\item\label{ass:OCBP14-Hs:descent} For each $\n\in \N$, 
  $f(z\iter\np) + a\norm[2]{x\iter\n - x\iter\nm}^2 \leq f(z\iter\n)$\,.
\item\label{ass:OCBP14-Hs:error} For each $\n\in\N$, the exists 
  $w\iter\np\in\partial f(z\iter\np)$\\ such that 
  $\norm{w\iter\np} \leq \frac b2(\norm[2]{x\iter\n - x\iter\nm} 
                            + \norm[2]{x\iter\np - x\iter\n})$.
\item\label{ass:OCBP14-Hs:cont} There exists a subsequence 
  $\seq[j\in\N]{z\iter{\n_j}}$ and $\tilde z$ such that 
  $z\iter{\n_j} \to \tilde z$ and $f(z\iter{\n_j}) \to f(\tilde z)$ 
  as $j\to \infty$.
\end{enumerate}
}
These conditions are recovered from our framework by setting 
$\F(z^n,u^\n) = f(z^\n)$, $d_n = \norm[2]{x^\n - x^\nm}$, $a_n = a\in\R$, $
b_n=1$, $I=\set{1,2}$, $\theta_{1} = \theta_2=\frac 12$  and $\eps_n=0$ for 
all $\n\in\N$. 
\begin{REM}
  Note that, using the equivalence between norms, the right hand side of the 
  inequality in \ref{ass:OCBP14-Hs:error} can be bounded from above:
  $\norm[2]{x\iter\n - x\iter\nm}+\norm[2]{x\iter\np - x\iter\n}
  \leq \sqrt{2} \norm[2]{z\iter\np-z\iter\n}$.
\end{REM}

\subsection{Discussion and perspectives} \label{subsec:discuss-cond-perspectives}

In Section~\ref{sec:rel-abstr-conv}, we have seen that the conditions in 
Assumption~\ref{ass:Hs} are more general than previous abstract convergence
results. In the following, we provide some discussion, intuition, and perspectives
of the conditions in Assumption~\ref{ass:Hs}.
\begin{itemize}
  \item Since $\F$ is bounded from below, \ref{ass:Hs:descent} requires that 
        $a\pit\n d\pit\n$ tends to $0$ as $\n\to\infty$. Moreover, as 
        $\inf_\n a\pit\n>0$, this implies that $d\pit\n\to 0$. 
  \item However, $\seq[\n\in\N]{a\pit\n}$ is not a priori assumed to be bounded. 
        The faster $a\pit\n$ tends to $\infty$, the faster the property 
        $a\pit\n d\pit\n\to0$ requires $d\pit\n$ to tend to $0$.
  \item If $d\pit\n\to 0$ and, assuming for a moment 
        that $\inf_\n b\pit\n>0$, \ref{ass:Hs:error} implies that 
        $\norm[-]{\partial \F(x\iter\n,u\iter\n)}\to 0$. However, 
        $\seq[\n\in\N]{b\pit\n}$ may tend to $0$, though not to fast because of
        \ref{ass:Hs:params}. The required slow behavior of $b\pit\n\to 0$
        will still allows us to conclude that 
        $\lim\inf_{\n\to\infty}\norm[-]{\partial \F(x\iter\n,u\iter\n)}=0$.
  \item The usage of the sequence $\seq{\eps\pit\n}$ accepts a larger relative
        error in \ref{ass:Hs:error} compared to \ref{ass:ABS13-Hs:error}.
  \item The sequence $\seq[\n\in\N]{d\pit\n}$ is introduced as a more general
        distance measure, which by \ref{ass:Hs:distance} is \enquote{consistent}
        with the Euclidean distance. The purpose of this generalization is to open
        the door for Bregman distances \cite{Bregman67} without the common 
        assumption of strong convexity or Lipschitz continuity of the gradient.
        Alternatively, the sequence $\seq[\n\in\N]{d\pit\n}$ can measure the
        distance between $\seq[\n\in\N]{x\iter\n}$ and a sequence of surrogate
        points, which only asymptotically, require
        $\norm[2]{x\iter\np-x\iter\n}\to 0$. Of course, when distances are only
        measured with such an abstract distance measure, convergence in the
        Euclidean sense cannot be expected without further assumptions. A third
        option, which we explore in this paper, is a sequence
        $\seq[\n\in\N]{d\pit\n}$ that measures the Euclidean distance only of a
        block of coordinates of $\seq[\n\in\N]{x\iter\n}$, which leads to block
        coordinate descent algorithms. A sufficient condition to achieve
        \ref{ass:Hs:distance} is to repeat each block after a finite
        number of steps (possibly unordered).
  \item The extension of \ref{ass:Hs:error} to the sum 
        $\sum_{i\in I} \theta_i d\pit{\n+1-i}$ seems to be important for 
        multi-step methods such as the Heavy-ball method \cite{Polyak64}, 
        iPiano \cite{OCBP14,Ochs15}, and other inertial forward--backward
        splitting methods \cite{BCL15,LFP16}. For the setting of \cite{LFP16},
        we provide some details in the appendix.
  \item The introduction of a sequence $\seq[\n\in\N]{u\iter\n}$ adds some 
        flexibility in the asymptotic behavior of the objective function. For 
        example, in \cite{OCBP14}, most of the analysis allows for step sizes
        and other parameters to change in each iteration. However, there is a 
        crucial parameter ($\delta$-parameter inside the Lyapunov function), 
        which is required to be constant for the convergence result. Using the
        gained flexibility from the sequence $\seq[\n\in\N]{u\iter\n}$, the 
        problem can be resolved. The variable metric iPiano considered in 
        Section~\ref{sec:vm-iPiano} requires a Lyapunov function that depends
        on a whole matrix, which thanks to the sequence $\seq[\n\in\N]{u\iter\n}$
        in Assumption~\ref{ass:Hs} can change in each iteration (see 
        \eqref{eq:hd-descent}). Note that this problem occurs due to the 
        definition of the Lyapunov function and does not appear, for example,
        in \cite{FGP14} where the variable metric is handled in a different 
        way.
\end{itemize}

\subsection{Convergence analysis} \label{subsec:conv-ana-abstr}

\subsubsection{Direct consequences of the descent property} 
Sufficient decrease \ref{ass:Hs:descent} of a certain quantity that can be 
related to the objective function value is key for the convergence analysis.
The following lemma lists a few simple but favorable properties for such
sequences.  
\begin{LEM} \label{lem:simple-conv}
  Let Assumption~\ref{ass:Hs} hold. 
  Then
  \begin{enumerate}
    \item\label{lem:F-monotone} $\seq[\n\in\N]{\F(x\iter\n,u\iter\n)}$ is 
          non-increasing,
    \item\label{lem:F-converges} $\seq[\n\in\N]{\F(x\iter\n,u\iter\n)}$ converges,
    \item\label{lem:simple-conv:iterates-converge} 
        $\sum_{\k=1}^\n d\pit\k^2 < +\infty$ and, therefore, $d\pit\n\to 0$ and 
        $\norm[2]{x\iter\np-x\iter\n} \to 0$, as $\n\to\infty$.
  \end{enumerate}
\end{LEM}
\begin{proof}
  \ref{lem:F-monotone} and \ref{lem:F-converges} follow from 
  \ref{ass:Hs:descent} and the boundedness from below of $\F$. 
  \ref{lem:simple-conv:iterates-converge} follows from summing 
  \ref{ass:Hs:descent} from $\k=1,\ldots,\n$ and \ref{ass:Hs:distance}, 
  \ref{ass:Hs:params}:
    \[
     \underline a \sum_{\k=1}^\n d\pit\k^2 
     \leq \sum_{\k=1}^\n a\pit\k d\pit\k^2 
     \leq \sum_{\k=1}^\n \F(x\iter\k,u\iter\k) - \F(x\iter\kp,u\iter\kp) 
     = ( \F(x\iter1,u\iter1) - \inf_{(x,u)\in\R^{\dimN}\times\R^\dimP}\F(x,u))  
     < +\infty \,. \qedhere
    \]
\end{proof}

\subsubsection{Direct consequences for the set of limit points}
Like in \cite{BST14}, we can verify some results about the set of limit points
(that depends on a certain initialization) of a bounded sequence
$\seq[\n\in\N]{(x\iter\n,u\iter\n)}$
\[
  \omega(x\iter0,u\iter0) := \limsup_{\n \to \infty}\, \set{(x\iter\n,u\iter\n)}  \,.
\]
This definition uses the outer set-limit of a sequence of singletons, which 
is the same as the set of cluster points in a different notation. Moreover, we 
denote by $\omega_{\F}(x\iter0,u\iter0)$ the subset of limit points that is
generated along $\F$-attentive subsequences, i.e.,
\[
  \omega_{\F}(x\iter0,u\iter0) := \set{(\bar x,\bar u)\in \omega(x\iter0,u\iter0)
  \setsep (x\iter{\n_j},u\iter{\n_j}) \Fto[\F] (\bar x,\bar u) \text{ for } j\to \infty} \,.
\]
We collect a few results that are of independent interest.
\begin{LEM} \label{lem:critical-point-set}
  Let Assumption~\ref{ass:Hs} hold and let $\seq[\n\in\N]{(x\iter\n,u\iter\n)}$ 
  be a bounded sequence.
  \begin{enumerate}
    \item\label{lem:critical-point-set:compcat}
      The set $\omega_{\F}(x\iter0,u\iter0)$ is non-empty and the set 
      $\omega(x^0,u^0)$ is non-empty and compact.
    \item\label{lem:critical-point-set:Fconstant}
      $\F$ is constant and finite on $\omega_{\F}(x\iter0,u\iter0)$.
  \end{enumerate}
\end{LEM}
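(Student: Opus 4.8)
The plan is to handle the two assertions separately, obtaining the non-emptiness of $\overline\omega(x^0,u^0)$ straight from the continuity condition \ref{ass:Hs:cont} and the constancy of $\F$ from the convergence of the function values in \cref{lem:simple-conv}.

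For part \ref{lem:critical-point-set:compcat} I would first dispatch the outer set $\omega(x^0,u^0)$. Because $\seq[n\in\N]{(x^n,u^n)}$ is assumed bounded, Bolzano--Weierstrass yields a convergent subsequence, so the set of cluster points $\omega(x^0,u^0)$ is non-empty. As an outer limit of singletons it is closed (a general property of the $\limsup$ of a sequence of sets), and it sits inside the closure of the bounded range of the sequence, hence is bounded; in finite dimensions closed and bounded gives compact. For $\overline\omega(x^0,u^0)$ I would simply invoke \ref{ass:Hs:cont}, which hands me a subsequence that $\F$-converges to some $(\tilde x,\tilde u)$; this point is a cluster point, so it lies in $\omega(x^0,u^0)$, and it admits an $\F$-attentive convergent subsequence by construction, so $(\tilde x,\tilde u)\in\overline\omega(x^0,u^0)$ and the latter set is non-empty.

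For part \ref{lem:critical-point-set:Fconstant} I would set $\F^\ast := \lim_{n\to\infty}\F(x^n,u^n)$, which exists by \cref{lem:simple-conv} and is finite since $\F$ is bounded from below and $\seq[n\in\N]{\F(x^n,u^n)}$ is non-increasing from the finite value $\F(x^1,u^1)$. Then for an arbitrary $(\bar x,\bar u)\in\overline\omega(x^0,u^0)$ the definition of $\overline\omega(x^0,u^0)$ supplies a subsequence with $(x^{n_j},u^{n_j})\Fto[\F](\bar x,\bar u)$, so in particular $\F(x^{n_j},u^{n_j})\to\F(\bar x,\bar u)$. Since every subsequence of the convergent sequence $\seq[n\in\N]{\F(x^n,u^n)}$ tends to $\F^\ast$, I conclude $\F(\bar x,\bar u)=\F^\ast$, and as the point was arbitrary, $\F$ is the finite constant $\F^\ast$ on all of $\overline\omega(x^0,u^0)$.

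The step that I expect to demand the most care is understanding why the constancy is claimed only on the refined set $\overline\omega(x^0,u^0)$ rather than on the full cluster set $\omega(x^0,u^0)$. For a generic cluster point the lower semi-continuity of $\F$ delivers only the one-sided estimate $\F(\bar x,\bar u)\le\liminf_j\F(x^{n_j},u^{n_j})=\F^\ast$, which may be strict; it is exactly the $\F$-attentive convergence demanded in the definition of $\overline\omega(x^0,u^0)$ that promotes this inequality to the equality used above. Recognizing that this is the sole purpose of introducing $\overline\omega(x^0,u^0)$---and keeping the two sets carefully apart---is the only genuinely non-routine point in the argument.
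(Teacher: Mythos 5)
Your proposal is correct and follows essentially the same route as the paper: non-emptiness of $\overline\omega(x^0,u^0)$ from the continuity condition \ref{ass:Hs:cont}, compactness of $\omega(x^0,u^0)$ from boundedness and the outer set-limit structure, and constancy of $\F$ by combining the convergence of $\seq[n\in\N]{\F(x^n,u^n)}$ from Lemma~\ref{lem:simple-conv} with the $\F$-attentive convergence built into the definition of $\overline\omega(x^0,u^0)$. Your closing observation---that plain lower semi-continuity would only yield $\F(\bar x,\bar u)\le \F^\ast$ and that this is precisely why the refined set $\overline\omega(x^0,u^0)$ is introduced---is an accurate reading of the role this set plays in the paper.
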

\begin{proof}
\begin{enumerate}
  \item By \ref{ass:Hs:cont}, there exist a subsequence 
  $\seq[j\in\N]{(x\iter{\n_j},u\iter{\n_j})}$ of $\seq[\n\in\N]{(x\iter\n,u\iter\n)}$ 
  that converges to $(\tilde x,\tilde u)$, where at the same time the function 
  values along this subsequence converge to $\F(\tilde x,\tilde u)$, therefore 
  $\lim_{j\to\infty} (x\iter{\n_j},u\iter{\n_j}) \in \omega_{\F}(x\iter0,u\iter0)$ 
  and $\omega_{\F}(x\iter0,u\iter0)$ is non-empty. The non-emptiness of 
  $\omega(x\iter0,u\iter0)$ is clear and the compactness of $\omega(x\iter0,u\iter0)$ 
  is direct consequence of its definition as an outer set-limit and the 
  boundedness of $\seq[\n\in\N]{(x\iter\n,u\iter\n)}$.
  \item By Lemma~\ref{lem:simple-conv}\ref{lem:F-converges} 
  $\seq[\n\in\N]{\F(x\iter\n,u\iter\n)}$ converges to some $\tilde \F\in\R$. 
  For any $(\bar x,\bar u)\in \omega_{\F}(x\iter0,u\iter0)$ there exists a 
  subsequence $\seq[j\in\N]{(x\iter{\n_j},u\iter{\n_j})}$ that $\F$-converges to 
  $(\bar x,\bar u)$, therefore,
  \[
    \tilde \F = \lim_{j\to\infty} \F(x\iter{\n_j}, u\iter{\n_j}) 
              = \F(\bar x,\bar u)\,,
  \]
  which shows that $\F$ is constant on $\omega_{\F}(x\iter0,u\iter0)$.
\end{enumerate}
\end{proof}

\begin{LEM} \label{lem:critical-point-set-additional-unused}
  Let Assumption~\ref{ass:Hs} hold and $\seq[\n\in\N]{(x\iter\n,u\iter\n)}$ 
  be a bounded sequence. Denote by 
  $\Pi_x(\omega)=\set{x\in \R^\dimN\vert\, (x,u) \in \omega}$ 
  the projection of $\omega\in\R^{\dimN}\times\R^{\dimP}$ onto the first $\dimN$ 
  coordinates. Then, we have the following results:
  \begin{enumerate}
    \item\label{lem:critical-point-set:connected} 
          The set $\Pi_x\left(\omega(x\iter0,u\iter0)\right)$ is connected. 
    \item\label{lem:critical-point-set:connected-both} 
          If $\seq[\n\in\N]{u\iter\n}$ converges, then the set 
          $\omega(x\iter0,u\iter0)$ is connected. 
    \item\label{lem:critical-point-set:attraction} 
      It holds that 
      \[
        \lim_{\n\to\infty}\, \dist((x\iter\n,u\iter\n), \omega(x\iter0,u\iter0)) 
        = 0\,.
      \]
  \end{enumerate}
\end{LEM}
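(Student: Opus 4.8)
The three parts all rest on two facts already in hand: the step decay $\norm[2]{x^{n+1}-x^n}\to 0$ furnished by \cref{lem:simple-conv}, and the non-emptiness and compactness of $\omega(x^0,u^0)$ from \cref{lem:critical-point-set}. The plan is to first record one preliminary observation, namely that $\Pi_x(\omega(x^0,u^0))$ is exactly the set of cluster points of the single sequence $\seq[\n\in\N]{x^\n}$. Indeed, boundedness of $\seq[\n\in\N]{u^\n}$ lets me lift any convergent $x$-subsequence to a convergent pair subsequence, so every cluster point of $\seq[\n\in\N]{x^\n}$ lies in $\Pi_x(\omega(x^0,u^0))$; conversely the $x$-projection of a convergent pair subsequence is a convergent $x$-subsequence. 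This reduces the first claim to a statement about the $x$-iterates alone, where the useful increment bound lives.

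I would prove \ref{lem:critical-point-set:attraction} first, as it is the simplest and uses only boundedness. Arguing by contradiction, if $\dist((x^\n,u^\n),\omega(x^0,u^0))$ did not tend to $0$ there would be $\delta>0$ and a subsequence staying at distance $\geq\delta$ from the non-empty set $\omega(x^0,u^0)$. By boundedness I extract a further convergent sub-subsequence; its limit is by definition a cluster point, hence belongs to $\omega(x^0,u^0)$ and has distance $0$ from it. Since $y\mapsto\dist(y,\omega(x^0,u^0))$ is continuous (indeed $1$-Lipschitz), the distances along the sub-subsequence would converge to $0$, contradicting that they remain $\geq\delta$.

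For \ref{lem:critical-point-set:connected} I would invoke the classical fact that the cluster set of a bounded sequence whose consecutive increments vanish is connected, applied to $\seq[\n\in\N]{x^\n}$ via $\norm[2]{x^{n+1}-x^n}\to 0$. Concretely, $\Pi_x(\omega(x^0,u^0))$ is compact, being the continuous (projection) image of the compact set $\omega(x^0,u^0)$. Were it disconnected I would split it into non-empty disjoint compact pieces $A$ and $B$ with $d:=\dist(A,B)>0$. Both $A$ and $B$ contain cluster points, so $x^\n$ enters the $d/3$-neighbourhood of each infinitely often; since the increments are eventually smaller than $d/3$, the iterates cannot jump between the two neighbourhoods in one step, and therefore infinitely many $x^\n$ must fall in the compact gap $\set{x\vert\, \dist(x,A)\geq d/3,\ \dist(x,B)\geq d/3}$. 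A cluster point of these gap iterates would lie in $\Pi_x(\omega(x^0,u^0))=A\cup B$ and yet have distance $\geq d/3$ from both $A$ and $B$, a contradiction. I expect this gap-crossing argument to be the main technical obstacle of the lemma; everything else is bookkeeping around it.

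Finally, for \ref{lem:critical-point-set:connected-both} I would use the extra hypothesis $u^\n\to\bar u$ to pin down the second coordinate. Every pair cluster point $(\bar x,\hat u)$ must satisfy $\hat u=\bar u$, because $u^{n_j}\to\bar u$ along any convergent pair subsequence, and conversely each $\bar x\in\Pi_x(\omega(x^0,u^0))$ pairs with $\bar u$; hence $\omega(x^0,u^0)=\Pi_x(\omega(x^0,u^0))\times\set{\bar u}$. This product of a connected set with a singleton is connected, so the claim follows from \ref{lem:critical-point-set:connected}. Equivalently, the convergence of $\seq[\n\in\N]{u^\n}$ forces $\norm[2]{u^{n+1}-u^n}\to 0$, so the gap-crossing argument above could instead be applied verbatim to the full pair sequence $\seq[\n\in\N]{(x^\n,u^\n)}$.
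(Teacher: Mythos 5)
Your proof is correct, and its skeleton matches the paper's; the difference is one of self-containedness. The paper disposes of the whole lemma in three lines: part (i) is obtained by citing the connectedness result of Bolte--Sabach--Teboulle \cite[Lemma 3.5]{BST14} (cluster sets of bounded sequences with vanishing consecutive increments are connected) together with $\norm[2]{x^{\n+1}-x^\n}\to 0$, part (ii) by the same citation applied to the pair sequence since $u^\n\to\bar u$ forces $\norm[2]{u^{\n+1}-u^\n}\to 0$, and part (iii) is declared a direct consequence of the definition of $\omega(x^0,u^0)$. You instead reprove the cited lemma from scratch: your gap-crossing argument (splitting a putative disconnection of the compact cluster set into disjoint compact pieces $A,B$ at positive distance, and trapping infinitely many iterates in the middle band via the small-increment property) is exactly the content of \cite[Lemma 3.5]{BST14}, and your discrete intermediate-value reasoning is sound. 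Two points are worth noting. First, your preliminary identification of $\Pi_x(\omega(x^0,u^0))$ with the cluster set of $\seq[\n\in\N]{x^\n}$ is a step the paper leaves implicit but genuinely needs in order to apply the connectedness lemma to the $x$-iterates alone, so making it explicit (via the boundedness of $\seq[\n\in\N]{u^\n}$ to lift subsequences) is a real improvement in rigor. Second, your reduction of (ii) via the product structure $\omega(x^0,u^0)=\Pi_x\left(\omega(x^0,u^0)\right)\times\set{\bar u}$ is cleaner than rerunning the argument on pairs, as the paper does, since a product of a connected set with a singleton is trivially connected. One cosmetic slip: the gap set $\set{x\vert\, \dist(x,A)\geq d/3,\ \dist(x,B)\geq d/3}$ is closed but in general unbounded, so it is not compact; this is harmless because the iterates landing in it form a bounded sequence, which already yields the cluster point you need.
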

\begin{proof}
\ref{lem:critical-point-set:connected} is a simple application of the 
connectedness results \cite[Lemma 5]{BST14} and the fact that
$\norm[2]{x\iter\np-x\iter\n} \to 0$ for $\n\to\infty$ by 
Lemma~\ref{lem:simple-conv}\ref{lem:simple-conv:iterates-converge}.
\ref{lem:critical-point-set:connected-both} follows in almost the same manner,
as convergence of $u^\n$ implies $\norm[2]{u\iter\np - u\iter\n}\to 0$ as $\n
\to\infty$. \ref{lem:critical-point-set:attraction} is a direct consequence of
the definition of the set of limit points.
\end{proof}

\begin{LEM} \label{lem:critical-point-set-additional}
  Let Assumption~\ref{ass:Hs} hold, let $\seq[\n\in\N]{(x\iter\n,u\iter\n)}$ be 
  a bounded sequence and let $\sum_{\n=0}^\infty d\pit\n < \infty$. Then, the 
  set $\omega_{\F}(x\iter0,u\iter0)\subset \crit \F$.
\end{LEM}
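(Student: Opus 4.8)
The plan is to fuse the sufficient decrease \ref{ass:Hs:descent} and the relative error \ref{ass:Hs:error} into a single \emph{summable} bound on the weighted lazy slopes, then to use the non-summability of $\seq[n\in\N]{b_n}$ to force the lazy slope to have null lower limit, and finally to upgrade this to criticality of the limit point via the closedness of the limiting subdifferential packaged in \cref{lem:lazy-slope-liminf}. Concretely, I would first sum \ref{ass:Hs:error} over $n$. The telescoped right-hand side is controlled by the two summability hypotheses at hand, namely the additional assumption $\sum_n d_n<\infty$ and the $\ell_1$-summability of $\seq[n\in\N]{\eps_n}$, since $\frac b2\sum_n(d_{n+1}+d_n)\leq b\sum_k d_k<\infty$ and $\sum_n\eps_{n+1}<\infty$. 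This yields
\[
  \sum_{n=1}^\infty b_{n+1}\,\norm[-]{\partial\F(x^\np,u^\np)} < \infty \,.
\]
Because \ref{ass:Hs:contract} requires $\seq[n\in\N]{b_n}\not\in\ell_1$, i.e.\ $\sum_n b_n=\infty$, a non-negative series with non-summable coefficients can converge only if the remaining factor dips arbitrarily close to zero, so that $\liminf_{n\to\infty}\norm[-]{\partial\F(x^\n,u^\n)}=0$.

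Next I would pass to a prescribed limit point $(\bar x,\bar u)\in\overline\omega(x^0,u^0)$. Two preparatory remarks help. The extra hypothesis $\sum_n d_n<\infty$ combines with $\norm[2]{x^\np-x^\n}\in\o(d_n)$ from \ref{ass:Hs:contract} to give $\sum_n\norm[2]{x^\np-x^\n}<\infty$, hence $x^\n\to x^\ast$ for a single point $x^\ast$, so every limit point has first coordinate $\bar x=x^\ast$. Moreover, by \cref{lem:simple-conv}\ref{lem:F-converges} the values $\F(x^\n,u^\n)$ converge to a common $\tilde\F\in\R$, which by \cref{lem:critical-point-set}\ref{lem:critical-point-set:Fconstant} equals $\F(\bar x,\bar u)$. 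To conclude $0\in\partial\F(\bar x,\bar u)$, i.e.\ $(\bar x,\bar u)\in\crit\F$, I would exhibit an $\F$-attentive subsequence converging to $(\bar x,\bar u)$ along which the lazy slope has liminf zero, and then invoke \cref{lem:lazy-slope-liminf}; equivalently, I would build $v^{m_k}\in\partial\F(x^{m_k},u^{m_k})$ with $v^{m_k}\to0$ on a subsequence that converges $\F$-attentively to $(\bar x,\bar u)$ and apply the closedness property of $\partial\F$ directly.

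The step I expect to be the main obstacle is reconciling the \emph{global} statement $\liminf_n\norm[-]{\partial\F(x^\n,u^\n)}=0$ with the \emph{particular} subsequence that defines $(\bar x,\bar u)$. Since \ref{ass:Hs:contract} permits $b_n\to0$ (the Remark even warns that the lazy slope itself may grow towards infinity), the indices realizing the liminf need not be the defining indices $n_j$, and a defining subsequence along which $\seq[j]{b_{n_j}}$ is summable carries no information about the liminf. My strategy to close this gap is to start instead from the liminf, extracting a subsequence $\seq[k\in\N]{(x^{m_k},u^{m_k})}$ with $\norm[-]{\partial\F(x^{m_k},u^{m_k})}\to0$; by boundedness I may assume it converges, to a point whose first coordinate is automatically $x^\ast$, and I would certify $\F$-attentiveness from $\F(x^{m_k},u^{m_k})\to\tilde\F$ together with lower semicontinuity of $\F$. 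The remaining delicate point is to guarantee that the resulting critical limit is the \emph{prescribed} $(\bar x,\bar u)$ rather than merely some point of $\overline\omega$ sharing the coordinate $x^\ast$; I expect this to need either convergence of $\seq[n\in\N]{u^\n}$ (which pins the second coordinate, as in \cref{lem:critical-point-set-additional-unused}) or a uniformity coming from how the defining subsequence of a point of $\overline\omega$ is selected, and this is the part of the argument I would write out most carefully.
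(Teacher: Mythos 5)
Your proposal follows the paper's proof essentially step for step: sum \ref{ass:Hs:error} using $\sum_n d_n<\infty$ and $\seq[n\in\N]{\eps_n}\in\ell_1$ to conclude $\sum_n b_n \norm[-]{\partial \F(x^n,u^n)}<\infty$, use $\seq[n\in\N]{b_n}\notin\ell_1$ to force $\liminf_{n\to\infty}\norm[-]{\partial \F(x^n,u^n)}=0$, and finish via Lemma~\ref{lem:lazy-slope-liminf} along an $\F$-attentive subsequence. Up to notation, this is exactly the argument in the paper.

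The one point worth recording is that the ``main obstacle'' you isolate --- the indices realizing the liminf need not be the indices $n_j$ along which $(x^{n_j},u^{n_j})\Fto[\F](\bar x,\bar u)$ --- is not resolved in the paper either: its two-line proof applies Lemma~\ref{lem:lazy-slope-liminf} with the liminf taken along the \emph{full} sequence while $\F$-attentive convergence is available only along the \emph{defining} subsequence, which is precisely the mismatch you describe. Your anticipated fix is also the one implicitly at work: in the only place the lemma is invoked, Theorem~\ref{thm:KL-theorem-descent}(ii), the sequence $\seq[\n\in\N]{u^\n}$ is assumed convergent; combined with $x^n\to\tilde x$ (which, as you note, already follows from $\sum_n d_n<\infty$ and \ref{ass:Hs:contract}) the whole sequence converges, and monotonicity of the values together with \ref{ass:Hs:cont} identifies the limit value as $\F(\tilde x,\tilde u)$, so the entire sequence is $\F$-attentively convergent, $\overline\omega(x^0,u^0)$ collapses to a single point, and Lemma~\ref{lem:lazy-slope-liminf} applies verbatim. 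One caveat on your sketched repair: lower semicontinuity of $\F$ alone cannot certify $\F$-attentiveness along the liminf subsequence, since it yields only $\F(\text{limit})\leq\tilde\F$; the missing upper estimate is exactly what distinguishes $\overline\omega(x^0,u^0)$ from $\omega(x^0,u^0)$. So without convergence of $\seq[\n\in\N]{u^\n}$ (or a comparable device) your argument --- like the paper's --- rigorously yields criticality only of those limit points attained $\F$-attentively along a subsequence of vanishing lazy slopes, rather than of every point of $\overline\omega(x^0,u^0)$; your honest flagging of this step is, if anything, more careful than the published proof.
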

\begin{proof}
  Let $(\bar x,\bar u) \in \omega(x\iter0,u\iter0)$. Then, since 
  $\seq[\n\in\N]{b\pit\n}\not\in\ell_1$ holds, from \ref{ass:Hs:error}, 
  $\seq[\n\in\N]{\eps\pit\n}\in\ell_1$ and
  \[
    \sum_{\n=0}^\infty b\pit\n \norm[-]{\partial \F(x\iter\n, u\iter\n)} 
    \leq b \sum_{\n=0}^\infty \sum_{i\in I}\theta\pit{i} d\pit{\n-i} 
         + \sum_{\n=0}^\infty \eps\pit\n 
    < \infty 
  \]
  follows $\liminf_{n\to\infty} \norm[-]{\partial \F(x\iter\n, u\iter\n)}=0$.
  For $(\bar x,\bar u) \in \omega_{\F}(x\iter0,u\iter0)$ the subsequence 
  $\seq[j\in\N]{(x\iter{\n_j},u\iter{\n_j})}$ $\F$-converges to $(\bar x,\bar u)$ 
  as $j\to\infty$ and Lemma~\ref{lem:lazy-slope-liminf} implies that 
  $0 \in \partial \F(\bar x, \bar u)$, which was to be proved.
\end{proof}
\begin{COR}
  Let Assumption~\ref{ass:Hs} hold and let $\seq[\n\in\N]{(x\iter\n,u\iter\n)}$ 
  be a bounded sequence. Suppose $\F$ is continuous on the set $W\cap \dom\F$ 
  with an open set $W\supset \omega(x\iter0,u\iter0)$ (e.g. $\F$ is continuous 
  on $\dom\F$), then
  \[
  \omega(x\iter0,u\iter0)=\omega_{\F}(x\iter0,u\iter0) \,.
  \]
\end{COR}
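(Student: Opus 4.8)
The plan is to prove the two inclusions separately. The inclusion $\overline\omega(x^0,u^0)\subset\omega(x^0,u^0)$ is immediate from the very definition of $\overline\omega$, so the entire content lies in the reverse inclusion $\omega(x^0,u^0)\subset\overline\omega(x^0,u^0)$, i.e.\ that \emph{every} cluster point admits an $\F$-attentive subsequence. To this end I would fix an arbitrary $(\bar x,\bar u)\in\omega(x^0,u^0)$ and pick, by the definition of the outer set-limit, a subsequence with $(x^{n_j},u^{n_j})\to(\bar x,\bar u)$ as $j\to\infty$; the goal is then to upgrade this ordinary convergence to $f$-attentive convergence, i.e.\ to additionally verify $\F(x^{n_j},u^{n_j})\to\F(\bar x,\bar u)$.

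First I would record that by Lemma~\ref{lem:simple-conv}\ref{lem:F-converges} the whole sequence of values $\seq[n\in\N]{\F(x^n,u^n)}$ converges to a finite limit $\tilde\F\in\R$ (finite because $\F$ is bounded from below and $\F(x^1,u^1)<+\infty$, exactly as used in the proof of Lemma~\ref{lem:simple-conv}\ref{lem:simple-conv:iterates-converge}), and hence so does every subsequence: $\F(x^{n_j},u^{n_j})\to\tilde\F$. The task therefore reduces to showing $\F(\bar x,\bar u)=\tilde\F$. Next I would locate the relevant points inside the set $C\cap\dom\F$ on which continuity is assumed: lower semi-continuity of $\F$ gives $\F(\bar x,\bar u)\leq\liminf_j\F(x^{n_j},u^{n_j})=\tilde\F<+\infty$, so $(\bar x,\bar u)\in\dom\F$; since $(\bar x,\bar u)\in\omega(x^0,u^0)\subset C$, we obtain $(\bar x,\bar u)\in C\cap\dom\F$. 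For the tail of the subsequence, the convergence $\F(x^{n_j},u^{n_j})\to\tilde\F<+\infty$ forces $(x^{n_j},u^{n_j})\in\dom\F$ for $j$ large, while openness of $C$ together with $(x^{n_j},u^{n_j})\to(\bar x,\bar u)\in C$ forces $(x^{n_j},u^{n_j})\in C$ for $j$ large; hence eventually $(x^{n_j},u^{n_j})\in C\cap\dom\F$.

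With the limit and a tail of the subsequence both inside $C\cap\dom\F$, I would invoke the assumed continuity of $\F$ relative to $C\cap\dom\F$ to conclude $\F(x^{n_j},u^{n_j})\to\F(\bar x,\bar u)$. Comparing with $\F(x^{n_j},u^{n_j})\to\tilde\F$ yields $\F(\bar x,\bar u)=\tilde\F$, so that $(x^{n_j},u^{n_j})\Fto[\F](\bar x,\bar u)$ and therefore $(\bar x,\bar u)\in\overline\omega(x^0,u^0)$, completing the reverse inclusion.

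The individual steps are all short, so there is no serious computational obstacle; the point that requires care — and which I would flag as the crux — is the correct handling of the domain, since continuity is only assumed \emph{relative} to $C\cap\dom\F$. One must independently certify that both the limit $(\bar x,\bar u)$ and a tail of the approximating subsequence actually lie in $C\cap\dom\F$ before continuity may be applied, and this is precisely where lower semi-continuity (to place the limit in $\dom\F$) and the finiteness of $\tilde\F$ (to place the tail in $\dom\F$), combined with the openness of $C$, do the real work. As a by-product the argument shows that $\omega(x^0,u^0)$ cannot contain a point outside $\dom\F$ under these hypotheses, every cluster point being forced into $\dom\F$ by the finite limit of the values.
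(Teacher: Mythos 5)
Your proposal is correct and takes essentially the same route as the paper's proof: fix a cluster point, pass to a convergent subsequence, place its tail together with the limit inside $C\cap\dom\F$ (the paper does this via an open neighborhood $V\subset C$ of the limit), and then apply the assumed relative continuity of $\F$ to upgrade plain convergence to $\F$-attentive convergence, the converse inclusion being definitional. The only difference is one of detail: you explicitly certify membership in $\dom\F$ of the limit (via lower semi-continuity of $\F$) and of the subsequence tail (via finiteness of the limit of the values from Lemma~\ref{lem:simple-conv}), steps the paper's proof asserts without comment.
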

\begin{proof}
Let $(x\iter{\n_j},u\iter{\n_j}) \to (\bar x, \bar u) \in \omega(x\iter0,u\iter0)$ 
as $j\to \infty$. There is a neighborhood $V\subset W$ with 
$(\bar x, \bar u)\in V$ such that $(x\iter{\n_j},u\iter{\n_j})\in V \cap\dom \F$ 
for sufficiently large $j\in\N$ and continuity of $\F$ implies 
$(x\iter{\n_j},u\iter{\n_j}) \Fto[\F] (\bar x, \bar u)$, thus 
$\omega(x\iter0,u\iter0)\subset  \omega_{\F}(x\iter0,u\iter0)$. The converse 
inclusion holds by definition.
\end{proof}

\subsubsection{The convergence theorem}

\begin{THM}\label{thm:KL-theorem-descent}
  Suppose $\F$ is a proper lower semi-continuous \KL function that is bounded 
  from below. Let $\seq[\n\in\N]{x\iter\n}$ be a bounded sequence generated by 
  an abstract algorithm parametrized by a bounded sequence 
  $\seq[\n\in\N]{u\iter\n}$ that satisfies Assumption~\ref{ass:Hs}. Assume that
  $\F$-attentive convergence holds along converging subsequences of 
  $\seq[\n\in\N]{(x\iter\n,u\iter\n)}$, i.e.
  $\omega(x\iter0,u\iter0)=\omega_{\F}(x\iter0,u\iter0)$. 
  Then, the following holds:
  \begin{enumerate}
    \item\label{thm:KL-theorem-finite-length-abstr-dist} 
          The sequence $\seq[\n\in\N]{d\pit\n}$ satisfies 
          \begin{equation} \label{eq:KL-theorem-finite-length-abstr-dist}
            \sum_{\k=0}^{\infty} d\pit\k < +\infty\,,
          \end{equation}
          i.e., the trajectory of the sequence $\seq[\n\in\N]{x\iter\n}$ has 
          finite length with respect to the abstract distance measures 
          $\seq[\n\in\N]{d\pit\n}$.
    \item\label{thm:KL-theorem-finite-length-dist} 
          Suppose $d\pit\k$ satisfies 
          $\norm[2]{x\iter\kp-x\iter\k}\leq \bar c d\pit{\k+\k^\prime}$ for some 
          $\k^\prime\in\Z$ and $\bar c\in\R$, then
          \begin{equation} \label{eq:KL-theorem-finite-length-dist}
            \sum_{\k=0}^{\infty} \norm[2]{x\iter\kp-x\iter\k} < +\infty\,,
          \end{equation}
          and the trajectory of the sequence $\seq[\n\in\N]{x\iter\n}$ has 
          a finite Euclidean length, and thus $\seq[\n\in\N]{x\iter\n}$ 
          converges to $\tilde x$ from \ref{ass:Hs:cont}.
     \item\label{thm:KL-theorem-critical-point} 
     Moreover, if $\seq[\n\in\N]{u^\n}$ is a converging sequence, then 
     each limit point of $\seq[n\in\N]{(x^\n,u^\n)}$ is a critical point,
     which in the situation of \ref{thm:KL-theorem-finite-length-dist} is
     the unique point $(\tilde x, \tilde u)$ from \ref{ass:Hs:cont}.
  \end{enumerate}
\end{THM}
\begin{proof}
  By \ref{ass:Hs:cont} there exists a subsequence 
  $\seq[j\in\N]{(x\iter{\n_j},u\iter{\n_j})}$ such that 
  $(x\iter{\n_j},u\iter{\n_j})\Fto[\F] (\tilde x,\tilde u)$ as $j\to\infty$. 
  If there is $\n^\prime$ such that $\F(x\iter{\n^\prime},u\iter{\n^\prime}) 
  = \F(\tilde x,\tilde u)$, then \ref{ass:Hs:descent} implies that 
  $\F(x\iter\n,u\iter\n)=\F(\tilde x,\tilde u)$ for all $\n\geq \n^\prime$, 
  thus also $a\pit\n d\pit\n^2 = 0$ and by $\underline a>0$ (see 
  \ref{ass:Hs:distance}) $d\pit\n = 0$ for all $\n\geq \n^\prime$. 
  Therefore, \ref{ass:Hs:distance} shows that 
  $x\iter\np = x\iter\n$ for all $\n\geq \n^{\prime\prime}$ for some 
  $\n^{\prime\prime}\in\N$, and by induction 
  $\seq[\n\in\N]{x\iter\n}$ gets stationary (i.e. $x\iter\n=x\iter{\n^{\prime\prime}}$ 
  for all $\n\geq \n^{\prime\prime}$) and the statement is obvious.
  
  Now, we can assume that $\F(x\iter\n,u\iter\n) > \F(\tilde x,\tilde u)$ for 
  all $\n\in \N$. Moreover, non-increasingness of 
  $\seq[\n\in\N]{\F(x\iter\n,u\iter\n)}$ by \ref{ass:Hs:descent} implies that 
  for all $\eta>0$ there exists $\n_1\in\N$ such that 
  $\F(\tilde x,\tilde u) < \F(x\iter\n,u\iter\n) < \F(\tilde x,\tilde n) + \eta$ 
  for all $\n\geq \n_1$. By definition there is also a region of attraction for 
  the sequence $\seq[\n\in\N]{x\iter\n,u\iter\n}$, i.e., for all $\eps > 0$ 
  there exists $\n_2\in\N$ such that 
  $\dist((x\iter\n,u\iter\n), \omega(x\iter0,u\iter0)) < \eps$ 
  holds for all $\n\geq \n_2$. In total, we know that for all 
  $\n\geq \n_0 := \max\set{\n_1,\n_2}$ the sequence 
  $\seq[\n\in\N]{(x\iter\n,u\iter\n)}$ lies in the set
  \[
    [\F(\tilde x,\tilde u) < \F(x,u) < \F(\tilde x,\tilde u) + \eta] 
    \cap [\dist((x,u), \omega(x\iter0,u\iter0))< \eps] \,.
  \]
  
  Combining the facts that $\omega(x\iter0,u\iter0)=\omega_{\F}(x\iter0,u\iter0)$ 
  is nonempty and compact from 
  Lemma~\ref{lem:critical-point-set}\ref{lem:critical-point-set:compcat} with 
  $\F$ being finite and constant on $\omega(x\iter0,u\iter0)$ from 
  Lemma~\ref{lem:critical-point-set}\ref{lem:critical-point-set:Fconstant}, 
  allows us to apply Lemma~\ref{lem:uniformization} with 
  $\omega=\omega(x\iter0,u\iter0)$. Therefore, there are $\phi$, $\eta$, $\eps$ 
  as in Lemma~\ref{lem:uniformization} such that for $\n>\n_0$ 
  \begin{equation}\label{eq:uniform-KL-for-proof}
    \phi^\prime(\F(x\iter\n,u\iter\n)-\F(\tilde x,\tilde u))
        \norm[-]{\partial \F(x\iter\n,u\iter\n)} \geq 1
  \end{equation}
  holds on $\omega$. Plugging \ref{ass:Hs:error} into 
  \eqref{eq:uniform-KL-for-proof} yields
  \begin{equation}\label{eq:lower-bound-error}
    \phi^\prime(\F(x\iter\n,u\iter\n)-\F(\tilde x,\tilde u)) 
      \geq  b\pit\n \left( b\sum_{i\in I} \theta_id\pit{\n-i} + \eps\pit{n} \right)^{-1}\,.
  \end{equation}
  By concavity of $\phi$: (let $m>\n$)
  \[
     D_{\n,m}^\phi := \phi(\F(x\iter\n,u\iter\n)-\F(\tilde x,\tilde u)) 
                   - \phi(\F(x\iter{m},u\iter{m})-\F(\tilde x,\tilde u))  
     \geq \phi^\prime(\F(x\iter\n,u\iter\n)-\F(\tilde x,\tilde u)) 
              (\F(x\iter\n,u\iter\n) - \F(x\iter{m},u\iter{m}))\,,
  \]
  using \eqref{eq:lower-bound-error} and \ref{ass:Hs:descent}, we infer
  \[
    D_{n,n+1}^\phi \geq \frac{b\pit\n a\pit\n d\pit\n^2}
                             {b\sum_{i\in I}\theta_id\pit{\n-i} + \eps_{n}}  
    \quad \Leftrightarrow \quad 
    d\pit\n^2 
    \leq \left( \sum_{i\in I}\theta_i^\prime d\pit{\n-i} 
      + \eps\pit\n^\prime \right) 
          \left(\frac{b^\prime}{a\pit\n b\pit\n}D_{\n,\np}^\phi\right)
  \]
  where we use the substitutions $\overline{\theta}:= \sum_{j\in I}\theta_j$, 
  $b^\prime:=b\overline{\theta}$, 
  $\theta_i^\prime :=\theta_i / \overline{\theta}$, and
  $\eps\pit\n^\prime:= \eps\pit\n/b^\prime$.
  Applying $2 \sqrt{\alpha\beta} \leq \alpha + \beta$ for all 
  $\alpha, \beta \geq 0$, we obtain 
  (set $c:=\sup_\n \frac {b^\prime}{a\pit\n b\pit\n}<\infty$ 
  (by \ref{ass:Hs:distance}))
  \[
    2 d\pit\n 
    \leq \frac{b^\prime}{a\pit\n b\pit\n} D_{\n,\np}^\phi 
       + \sum_{i\in I}\theta_i^\prime d\pit{\n-i} + \eps\pit\n^\prime  
    \leq c D_{\n,\np}^\phi 
       + \sum_{i\in I}\theta_i^\prime d\pit{\n-i} + \eps\pit\n^\prime  \,.
  \]
  Now summing this inequality from $\k=\n_0,\ldots, \n$ yields:
  \begin{equation} \label{eq:thm:KL-theorem-descent-A}
      2 \sum_{k=n_0}^{n} d\pit\k 
      \leq 
      \sum_{\k=\n_0}^{\n} \sum_{i\in I}\theta_i^\prime d\pit{\k-i} 
          + c \sum_{\k=\n_0}^{\n} D_{\k,\kp}^\phi 
          + \sum_{\k=\n_0}^{\n} \eps\pit\k^\prime \,.
  \end{equation}
  The first sum on the right hand side can be rewritten as follows\footnote{%
    We use the convention that the summation is zero when the start index 
    is larger than the termination index.}: (use the substitution $j=\k-i$) 
  \[
    \sum_{\k=\n_0}^{\n} \sum_{i\in I}\theta_i^\prime d\pit{\k-i}
    = \sum_{i\in I} \sum_{j=\n_0-i}^{\n-i} \theta_i^\prime d\pit{j} 
    = \Big(\sum_{i\in I} \theta_i^\prime\Big)\sum_{j=n_0}^{\n} d\pit{j}
    + \sum_{i\in I} \sum_{j=\n_0-i}^{\n_0-1} \theta_i^\prime d\pit{j}
    + \sum_{i\in I} \sum_{j=\n+1}^{\n-i} \theta_i^\prime d\pit{j}\,.
  \]
  Using $\sum_{i\in I}\theta_i^\prime=1$ and rearranging terms in 
  \eqref{eq:thm:KL-theorem-descent-A} yields
  \[
    \sum_{k=n_0}^{n} d\pit\k 
    \leq
    \sum_{i\in I} \sum_{j=\n_0-i}^{\n_0-1} \theta_i^\prime d\pit{j}
      + \sum_{i\in I} \sum_{j=\n+1}^{\n-i} \theta_i^\prime d\pit{j}
      + c \sum_{\k=\n_0}^{\n} D_{\k,\kp}^\phi
      + \sum_{\k=\n_0}^{\n} \eps\pit\k^\prime \,.
  \]
  From this inequality, we conclude that 
  $\lim_{\n\to\infty} \sum_{\k=0}^\n d\pit\k<+\infty$. The first and second 
  term of the right hand side are finite summations and $d\pit\n\to 0$ as 
  $\n\to\infty$. The third term equals $cD_{n_0,n+1}^\phi$, which is bounded 
  from above by $\phi(\F^{n_0}(x^{n_0})-\F(\tilde x))<+\infty$. The last term
  is finite by assumption $\seq[n\in\N]{\eps_n}\in \ell_1$, which, in total,
  verifies \ref{thm:KL-theorem-finite-length-abstr-dist}.
  
  \ref{thm:KL-theorem-finite-length-dist} is a consequence of 
  \ref{thm:KL-theorem-finite-length-abstr-dist} and the fact that
  for arbitrary $m>n>0$
  \[
    \norm[2]{x\iter{m}-x\iter{\n}} 
    \leq \sum_{\k=\n}^{m-1} \norm[2]{x\iter\kp-x\iter\k}
    \leq c \sum_{\k=\n}^{m-1} d\pit{\k+\k^\prime} < + \infty 
  \]
  holds, which shows that $\seq[\n\in\N]{x\iter\n}$ is a Cauchy sequence 
  (The right hand side vanishes for $\n,m\to\infty$). Therefore, 
  $x\iter\n\to\tilde x$ as $\n\to\infty$, which verifies 
  \ref{thm:KL-theorem-finite-length-dist}.  
  Using \ref{thm:KL-theorem-finite-length-abstr-dist} and 
  \ref{thm:KL-theorem-finite-length-dist}, 
  \ref{thm:KL-theorem-critical-point} is a direct consequence of 
  Lemma~\ref{lem:critical-point-set-additional}.
\end{proof}

\section{Variable metric iPiano} \label{sec:vm-iPiano}

We consider a structured non-smooth, non-convex optimization problem with a 
proper lower semi-continuous extended valued function $\map{h}{\R^N}{\eR}$, 
$N\geq 1$, that is bounded from below by some value $\underline h > -\infty$:
\begin{equation}\label{eq:problem-class}
\min_{x \in \R^N}\, h(x)\,, \quad h(x) = f(x) + g(x)\,.
\end{equation}
The function $\map f{\R^N}{\R}$ is assumed to be $\cd 1$-smooth (possibly
non-convex) with $L$-Lipschitz continuous gradient on $\dom g$, $L>0$. Further,
let the function $\map g {\R^N}{\eR}$ be simple (possibly non-smooth and
non-convex) and prox-bounded, i.e., there exists $\lambda>0$ such that 
\[
  e_\lambda g(x) := \inf_{y\in\R^N} g(y) + \frac{1}{2\lambda}\norm{y-x}^2>-\infty
\]
for some $x\in\R^N$. Saying \enquote{$g$ is simple} refers to the fact that 
the associated proximal map can be solved efficiently for the global optimum. \\

We propose Algorithm~\ref{alg:ipiano-vm-gen} to find a critical point 
$x^*\in \dom h$ of $h$, which in this case is characterized by
\[
-\nabla f(x^*) \in \partial g(x^*)\,,
\]
where $\partial g$ denotes the limiting subdifferential. The parameter 
restrictions are discussed in Lemma~\ref{lem:vmipiano:step-size-necessary} and 
Remark~\ref{rem::ipiano-vm-param-discuss}.\\

Depending on the properties of $g$, the step size parameter $\alpha_\n$ and 
the inertial parameter $\beta_\n$ must satisfy different conditions. We analyse
the properties when $g$ is convex, semi-convex, or non-convex in a concise 
manner. If $g$ is semi-convex with respect to the metric induced by
$A\in\spd(\dimN)$, let $m$ be the semi-convexity parameter, i.e., $m\in \R$ is
the largest value such that $g(x) - \frac m2 \norm[A]{x}^2$ is convex. For
convex functions $m=0$ and for strongly convex functions $m>0$. Instead of 
considering the situation where $g$ is non-convex as a semi-convex function 
with \enquote{$m=-\infty$}, we introduce a \enquote{flag variable} 
$\sigma\in\set{0,1}$, which is $1$ if $g$ is semi-convex and $0$ if $g$ is 
non-convex. Note that if $\sigma =1$ the property of semi-convexity is satisfied 
for any $A\in\spd(\dimN)$, but with possibly changing modulus. Therefore, 
sometimes the metric is not explicitly specified. 

\begin{figure}[h!]
\centering
\setlength{\fboxrule}{1pt}
\fbox{
\begin{minipage}{0.95\textwidth}
\begin{ALG}\label{alg:ipiano-vm-gen}
\
\textbf{V}ariable \textbf{m}etric \textbf{i}nertial \textbf{p}rox\textbf{i}mal \textbf{a}lgorithm
for \textbf{n}onconvex \textbf{o}ptimization (vmiPiano)
\begin{itemize}
\item \textbf{Parameter}: Let
\begin{itemize}
  \item $\seq[n\in\N]{\alpha_n}$ be a sequence of positive step size parameters,
  \item $\seq[n\in\N]{\beta_n}$ be a sequence of non-negative parameters, and
  \item $\seq[\n\in\N]{A_\n}$ be a sequence of matrices $A_\n\in\spd(\dimN)$ 
        such that $A_n \preceq \opid$ and $\inf_n \varsigma (A_n) > 0$.
  \item Let $\sigma = 1$ if $g$ is semi-convex and $\sigma= 0$ otherwise.
\end{itemize}
\item \textbf{Initialization}: Choose a starting point $x^0 \in \dom h$ and 
                               set $x^{-1}= x^0$.
\item \textbf{Iterations $(n\ge 0)$}:  Update:
  \begin{equation}\label{eq:ipiano-vm-gen-up}
    \begin{split}
    y^{\n} =&\ x^{\n} + \beta_{\n}(x^{\n} - x^{\nm})  \\
    x^{\np} \in&\ \arg\min_{x\in\R^N}\ Q^n (x;x^n)\,, 
    \quad 
    Q^n (x;x^n) := g(x) + \scal{\nabla f(x^\n)}{x-x^\n} 
                 + \frac{1}{2\alpha_n} \norm[A_n]{x - y^\n}^2  \,,
    \end{split}
  \end{equation}
  where $L_n>\sigma m_n$ is determined such that 
  \begin{equation} \label{eq:ipiano:quad-upper-bnd}
    f(x^{n+1}) \leq f(x^n) + \scal{\nabla f(x^n)}{x^\np - x^{n}} + 
    \frac {L_n}{2} \norm[A_n]{x^\np-x^n}^2
  \end{equation}
  holds and $\alpha_n,\beta_n$ with $\inf_n \alpha_n>0$ are chosen such that 
  (see e.g. Lemma~\ref{lem:vmipiano:step-size-necessary})
    \begin{equation} \label{eq:ipiano-delta-gamma}
      \delta_n^\sigma:= \frac 12\left(\frac{1+\sigma - \beta_n}{\alpha_n} 
                      - (L_n-\sigma m_\n) \right)\quad\text{and}\quad
      \gamma_n := \delta_n^\sigma - \frac{\beta_n}{2\alpha_n} 
    \end{equation}
  satisfy 
  \begin{equation} \label{eq:ipiano-delta-gamma-cond}
    \inf_\n \gamma_\n>0\quad\text{and}\quad 
    \delta_{\np}^\sigma \norm[A_\np]{x^\np - x^n}^2 
        \leq \delta_n^\sigma \norm[A_n]{x^\np - x^n}^2  \,,
  \end{equation}
  where $m_n\in\R$ denotes the semi-convexity modulus of $g$ w.r.t. 
  $A_\n\in\spd(\dimN)$ (if $\sigma=1$).
  \end{itemize}
\end{ALG}
\end{minipage}
}
\end{figure}
\begin{LEM} \label{lem:vmipiano:step-size-necessary}
  A necessary condition for the sequences $\seq[\n\in\N]{\alpha_\n}$ and 
  $\seq[\n\in\N]{\beta_\n}$ to satisfy $\gamma_n \geq c >0$ for all $n\in\N$ is
  \[
      \alpha_n \leq \frac{1+\sigma - 2\beta_n}{L_n - \sigma m_n + 2c}
      \quad\text{and}\quad 
      \beta_n \leq \frac {1+\sigma}{2} \,.
  \]
\end{LEM}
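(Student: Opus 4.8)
The plan is to eliminate $\delta_n^\sigma$ from the definition of $\gamma_n$ and convert the hypothesis $\gamma_n \geq c$ into an explicit inequality involving only $\alpha_n,\beta_n,L_n,m_n$. Substituting the formula for $\delta_n^\sigma$ from \eqref{eq:ipiano-delta-gamma} and collecting the two $\beta_n$-terms over the common denominator $2\alpha_n$ gives
\[
  \gamma_n = \frac{1+\sigma-\beta_n}{2\alpha_n} - \frac{L_n-\sigma m_n}{2} - \frac{\beta_n}{2\alpha_n}
          = \frac{1+\sigma-2\beta_n}{2\alpha_n} - \frac{L_n-\sigma m_n}{2}\,,
\]
so that $\gamma_n \geq c$ is equivalent to the single inequality $\frac{1+\sigma-2\beta_n}{2\alpha_n} \geq c + \frac{L_n-\sigma m_n}{2}$.

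Next I would read off the bound on $\beta_n$ \emph{before} doing any division. Since $\alpha_n>0$ and, by the standing requirement $L_n>\sigma m_n$ in \cref{alg:ipiano-vm-gen}, the right-hand side $c+\tfrac12(L_n-\sigma m_n)$ is strictly positive, the left-hand side must be positive too; as $2\alpha_n>0$ this forces $1+\sigma-2\beta_n>0$, i.e. $\beta_n<\tfrac{1+\sigma}{2}$, which in particular gives the asserted $\beta_n\leq\tfrac{1+\sigma}{2}$. Having secured the positivity of the numerator, I may clear denominators safely: multiplying by $2\alpha_n>0$ yields $1+\sigma-2\beta_n \geq \alpha_n\,(L_n-\sigma m_n+2c)$, and dividing by the positive quantity $L_n-\sigma m_n+2c$ produces the sharp threshold $\alpha_n \leq \frac{1+\sigma-2\beta_n}{L_n-\sigma m_n+2c}$.

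Finally, to arrive at the form stated in the lemma I would simply relax the denominator: because $c>0$ one has $0<L_n-\sigma m_n+c<L_n-\sigma m_n+2c$, and since the numerator $1+\sigma-2\beta_n$ is positive, enlarging the denominator only shrinks the fraction, whence
\[
  \alpha_n \leq \frac{1+\sigma-2\beta_n}{L_n-\sigma m_n+2c} \leq \frac{1+\sigma-2\beta_n}{L_n-\sigma m_n+c}\,,
\]
which is exactly the claimed necessary condition. There is no genuine analytic difficulty here; the argument is elementary algebra, and the only two points that require care are the \emph{order} of the steps — one must deduce $1+\sigma-2\beta_n>0$ from $L_n>\sigma m_n$ and $\alpha_n>0$ before dividing, lest the inequality direction flip — and the recognition that the stated bound is a deliberately non-sharp relaxation of the exact threshold, and is therefore a legitimate (merely necessary, not sufficient) condition.
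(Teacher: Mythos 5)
Your proposal is correct and takes essentially the same route as the paper, whose entire proof is the one-liner that ``the bounds directly follow from $\inf_n \gamma_\n > 0$''; your computation $\gamma_n = \frac{1+\sigma-2\beta_n}{2\alpha_n} - \frac{L_n-\sigma m_n}{2}$ is exactly the intended expansion, with the order of steps (deducing $1+\sigma-2\beta_n>0$ from $L_n>\sigma m_n$, $c>0$, and $\alpha_n>0$ before clearing denominators) handled properly. Your additional observation that the sharp threshold carries $L_n-\sigma m_n+2c$ in the denominator, so the lemma's stated bound with $c$ is a strictly weaker, hence still necessary, condition, is a correct and worthwhile clarification that the paper leaves implicit.
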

\begin{proof}
  The bounds directly follow from $\inf_n \gamma_\n > 0$.
\end{proof}

\begin{REM}
The minimization problem in \eqref{eq:ipiano-vm-gen-up} is equivalent to 
(constant terms are dropped)
\begin{equation} \label{eq:ipiano-gen-up-variant-a}
  \arg\min_{x\in\R^N}\ g(x) 
    + \scal{\nabla f(x^\n)}{x-x^\n}
    -\frac{\beta_n}{\alpha_n}\scal{x^\n-x^\nm}{x - x^\n}_{A_n} 
    + \frac{1}{2\alpha_n} \norm[A_n]{x - x^\n}^2  \,.
\end{equation}
\end{REM}

The optimality condition of the minimization problem in 
\eqref{eq:ipiano-vm-gen-up} yields
\[
  0 \in \partial Q^n(x;x^n) = \partial g(x) + \nabla f(x^\n) + \frac {1}{\alpha_n} A_n(x-y^\n)
\]
and using the expression for $y^\n$ and a simple rearrangement, we obtain the
necessary condition for $x^\np$:
\begin{equation} \label{eq:ipiano-vm-gen-up-prox}
  x\in (\opid + \alpha_n A_n^{-1} \partial g)^{-1}
    \left( x^\n - \alpha_n A_n^{-1} \nabla f(x^\n) + \beta_n(x^\n - x^\nm)\right) \,.
\end{equation}
For a convex function $g$, inverting the expression 
$\opid + \alpha_n A_n^{-1} \partial g$ 
yields a unique solution and the inclusion can be replaced by an equality. 
Here, the operator is set-valued. 
\begin{REM} \label{rem::ipiano-vm-param-discuss}
\begin{itemize}
  \item The assumption in \eqref{eq:ipiano:quad-upper-bnd} is satisfied for 
  example, if $f$ has an $L$-Lipschitz continuous gradient with $A_n = \opid$, 
  or when a local estimate of the Lipschitz constant $L_n$ is known (also 
  $A_n = \opid$).
  \item Since $\nabla f$ is assumed to be Lipschitz continuous, given 
  $A\in\spd(\dimN)$, we can always find $L$ such that $A_n$ can be 
  \enquote{normalized} to $0\preceq A \preceq \opid$. In practice the algorithm 
  can be extended by a backtracking procedure for estimating $L_n$.
  \item The additional hyperparameters $\delta^\sigma_\n$ and $\gamma_n$ can be 
  seen as an disadvantage, however, actually, they allow for a constructive 
  selection of the step size parameters (cf. \cite{OCBP14}). For example in 
  \cite{BCL15}, such hyperparameters do not appear and only existence of 
  parameters that satisfy certain conditions can be guaranteed.
  \item The first condition in \eqref{eq:ipiano-delta-gamma-cond} is satisfied by 
  the parameter choice suggested in Lemma\ref{lem:vmipiano:step-size-necessary}. 
  The second condition can be achieved by specifying a monotonically 
  non-increasing sequence $\seq[\n\in\N]{\delta^\sigma_\n}$; then the condition
  on the descent w.r.t. the metric is slightly more restrictive than the
  standard assumption in this context \cite{CV13,CV14}, but could potentially
  be included into the backtracking procedure for
  \eqref{eq:ipiano:quad-upper-bnd}.
  \item Unlike in \cite{OCBP14,Ochs15}, where the sequence $\delta_\n$ is
  assumed to be stationary after a finite number of iterations to obtain the
  final convergence result, here, the restrictions for $\delta_n$ and $A_n$ are
  very loose: essentially boundedness is required. 
\end{itemize}
\end{REM}

As mentioned before, we want to take advantages out of $g$ being semi-convex.
The next lemmas are essential for that.
\begin{LEM} \label{lem:semi-convexity-g}
  Let $g$ be proper semi-convex with modulus $m\in\R$ with respect to the
  metric induced by $A\in\spd(\dimN)$. Then, for any $\bar x \in \dom \partial g$ 
  it holds that 
  \[
    g(x)  \geq g(\bar x) + \scal{\bar v}{x-\bar x} 
          + \frac{m}{2} \norm[A]{x-\bar x}^2 \,,
          \quad \forall x\in\R^\dimN\ \text{and}\ \bar v\in \partial g(\bar x) \,. 
  \]
\end{LEM}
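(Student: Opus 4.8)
The plan is to prove the subgradient inequality for semi-convex functions by reducing it to the standard subgradient inequality for convex functions. The key observation is the \emph{definition} of semi-convexity itself: by hypothesis, the function $x \mapsto g(x) - \frac{m}{2}\norm[A]{x}^2$ is convex. Call this auxiliary function $\tilde g$. Since convex functions admit supporting hyperplanes at every point where a subgradient exists, I would first establish the standard convex subgradient inequality for $\tilde g$, then translate it back to $g$ by expanding the quadratic terms.

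First I would record that $\tilde g(x) := g(x) - \frac{m}{2}\norm[A]{x}^2$ is convex. Fix $\bar x \in \dom\partial g$ and $\bar v \in \partial g(\bar x)$. The next step is to identify the corresponding subgradient of $\tilde g$ at $\bar x$: using the calculus rule recalled in the preliminaries (the subdifferential sum rule for an extended-valued function plus a smooth function, \cite[Ex. 8.8]{Rock98}), and the fact that $\grad\!\left(\frac{m}{2}\norm[A]{\cdot}^2\right)(\bar x) = m A \bar x$, we have $\partial \tilde g(\bar x) = \partial g(\bar x) - m A\bar x$, so $\tilde v := \bar v - m A \bar x \in \partial\tilde g(\bar x)$. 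Since $\tilde g$ is convex, its limiting subdifferential coincides with the convex subdifferential, and therefore the global supporting-hyperplane inequality holds:
\[
  \tilde g(x) \geq \tilde g(\bar x) + \scal{\tilde v}{x - \bar x} \qquad \text{for all } x\in\R^\dimN\,.
\]

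The final step is to substitute back the definitions of $\tilde g$ and $\tilde v$ and simplify. Writing out both sides gives
\[
  g(x) - \tfrac{m}{2}\norm[A]{x}^2 \geq g(\bar x) - \tfrac{m}{2}\norm[A]{\bar x}^2 + \scal{\bar v - m A\bar x}{x - \bar x}\,,
\]
and rearranging the quadratic and inner-product terms, the cross terms combine to reproduce $\frac{m}{2}\norm[A]{x - \bar x}^2$ via the identity $\frac{1}{2}\norm[A]{x}^2 - \frac{1}{2}\norm[A]{\bar x}^2 - \scal{A\bar x}{x - \bar x} = \frac{1}{2}\norm[A]{x - \bar x}^2$, yielding exactly the claimed inequality. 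I do not expect any genuine obstacle here: the proof is essentially the definition of semi-convexity combined with the convex subgradient inequality and one algebraic rearrangement. The only point requiring mild care is the justification that the limiting subdifferential of the convex function $\tilde g$ agrees with its convex subdifferential (so that the supporting-hyperplane inequality is available), which is standard but worth citing. The hypothesis $\tilde x \in \dom g$ appears to play no role in the inequality itself and is presumably present only for context or a companion statement.
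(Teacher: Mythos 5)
Your proposal is correct and takes essentially the same route as the paper's proof: subtract the quadratic from $g$, identify the subgradient of the resulting convex function via the sum rule \cite[Ex.~8.8]{Rock98}, apply the convex subgradient inequality, and recover the claim through the polarization identity $\frac 12\norm[A]{x}^2 - \frac 12\norm[A]{\bar x}^2 - \scal{A\bar x}{x-\bar x} = \frac 12\norm[A]{x-\bar x}^2$. The only cosmetic difference is that you center the quadratic at the origin while the paper works with $\mathfrak g(x) = g(x) - \frac m2\norm[A]{x-\tilde x}^2$ centered at $\tilde x$; since the two quadratics differ by an affine function, convexity is unaffected, and your observation that the hypothesis $\tilde x\in\dom g$ plays no role in the argument is accurate.
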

\begin{proof}
Fix $\tilde x \in \dom g$ and apply the subgradient inequality to $g_m(x):= g(x) - \frac m2\norm[A]{x-\tilde x}^2$ around the point $\bar x$, i.e., it holds that
\[
  g_m(x) \geq g_m(\bar x) + \scal{\bar w}{x-\bar x}\,,\quad \forall x\in\R^\dimN \text{and}\ \bar w\in \partial g_m (\bar x) \,.
\]
Note that $\bar w$ is an element from the (convex) subdifferential. Due to the smoothness of $\frac{m}2 \norm[A]{x - \tilde x}^2$, we can use the summation rule for the limiting subdifferential to obtain 
\[
  \partial g_m(\bar x) =  \partial \left(g - \frac{m}{2}\norm[2]{\cdot - \tilde x}^2\right)(\bar x) = \partial g(\bar x) - mA(\bar x-\tilde x)\,,
\]
and, therefore, replacing $\bar w$ by $\bar v- mA(\bar x - \tilde x)$ with $\bar v \in \partial g(\bar x)$ in the subgradient inequality above, we obtain after using 
\[
  2\scal{\bar x-\tilde x}{x - \bar x}_A = \norm[A]{x - \tilde x}^2 - \norm[A]{\bar x-\tilde x}^2 - \norm[A]{x-\bar x}^2
\] 
that the following inequality holds
\[
  g_m(x) + \frac m2\norm[A]{x-\tilde x}^2 \geq g_m(\bar x) + \frac m2\norm[A]{\bar x-\tilde x}^2 + \frac{m}{2} \norm[A]{x-\bar x}^2 + \scal{\bar v}{x-\bar x}\,,\quad \forall x\in\R^\dimN\ \text{and}\ \bar v \in \partial g(\bar x)\,, 
\]
which implies the statement.
\end{proof}
\begin{LEM}\label{lem:eq:ipiano:descent-G}
  Let $\sigma = 1$ if $g$ is proper semi-convex with modulus $m\in\R$ with
  respect to the metric induced by $A\in\spd(\dimN)$ and $\sigma =0$ otherwise.
  Then it hold that
  \begin{equation} \label{eq:ipiano:descent-G}
    Q^\n(x^{n+1};x^\n) + \frac{\sigma}{2}\left(m + \frac 1{\alpha_n}\right) 
                            \norm[A]{x^{n+1}-x^n}^2 \leq Q^\n(x^n;x^\n) \,.
  \end{equation}
\end{LEM}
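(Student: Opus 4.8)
The plan is to decompose the surrogate as $G^\n(\cdot\,;x^\n) = g + Q$, where
\[
  Q(x) := \scal{\nabla f(x^\n)}{x-x^\n} + \frac{1}{2\alpha_n}\norm[A_n]{x-y^\n}^2
\]
is a smooth quadratic with constant Hessian $\frac{1}{\alpha_n}A_n$ (here the metric $A$ in the statement is the one used in $G^\n$, i.e.\ $A=A_n$, and $m=m_n$). The case $\sigma=0$ is immediate: since $x^\np$ minimizes $G^\n(\cdot\,;x^\n)$ by \eqref{eq:ipiano-vm-gen-up}, one has $G^\n(x^\np;x^\n)\leq G^\n(x^\n;x^\n)$, which is exactly \eqref{eq:ipiano:descent-G} with the $\sigma$-term absent. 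So the work is in the case $\sigma=1$.

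For $\sigma=1$, I would first use that $Q$ is \emph{exactly} quadratic, so its second-order Taylor expansion around $x^\np$ is an equality,
\[
  Q(x^\n) = Q(x^\np) + \scal{\nabla Q(x^\np)}{x^\n-x^\np} + \frac{1}{2\alpha_n}\norm[A_n]{x^\n-x^\np}^2,
  \qquad \nabla Q(x^\np) = \nabla f(x^\n) + \tfrac{1}{\alpha_n}A_n(x^\np-y^\n).
\]
The optimality condition for the minimizer $x^\np$ (the limiting-subdifferential sum rule applies since $Q$ is smooth) gives $\bar v := -\nabla Q(x^\np)\in\partial g(x^\np)$. Applying \cref{lem:semi-convexity-g} at $\bar x=x^\np$ and evaluating at $x=x^\n$ then yields
\[
  g(x^\n) \geq g(x^\np) + \scal{\bar v}{x^\n-x^\np} + \frac{m}{2}\norm[A]{x^\n-x^\np}^2.
\]

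Adding this inequality to the $Q$-identity is the decisive step: the two first-order contributions combine into $\scal{\bar v + \nabla Q(x^\np)}{x^\n-x^\np}$, which vanishes precisely because $\bar v = -\nabla Q(x^\np)$. Using $G^\n = g+Q$ and $A=A_n$, the two quadratic remainders add to $\frac{1}{2}\big(m+\frac{1}{\alpha_n}\big)\norm[A]{x^\n-x^\np}^2$, so that $G^\n(x^\n;x^\n) \geq G^\n(x^\np;x^\n) + \frac{1}{2}\big(m+\frac{1}{\alpha_n}\big)\norm[A]{x^\n-x^\np}^2$, which is \eqref{eq:ipiano:descent-G} after rearranging. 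I do not expect a genuine obstacle; the only points needing care are that $Q$ is treated as an exact quadratic (so the expansion is an equality rather than a strong-convexity inequality) and that \cref{lem:semi-convexity-g} is invoked with the same metric and modulus appearing in $G^\n$, so that the remainders match up as claimed.
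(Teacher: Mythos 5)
Your proposal is correct and is in substance the paper's own argument: the paper applies \cref{lem:semi-convexity-g} once to the whole surrogate $x\mapsto G^\n(x;x^\n)$, observing it is semi-convex with modulus $\sigma\,(m+\frac{1}{\alpha_n})$ with respect to $A=A_n$, and takes $\bar v=0\in\partial G^\n(x^\np;x^\n)$ at the minimizer, whereas you keep the quadratic $Q$ outside the modulus via its exact Taylor expansion and take $\bar v=-\nabla Q(x^\np)\in\partial g(x^\np)$ --- an algebraically equivalent repackaging in which your cancellation of the first-order terms corresponds precisely to the paper's choice $\bar v=0$. Your explicit treatment of the $\sigma=0$ case by plain minimality of $x^\np$ is a small point of added care, since the paper's one-line proof implicitly relies on the same observation there.
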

\begin{proof}
Apply Lemma~\ref{lem:semi-convexity-g} with $x=x^\n$ and $\bar x= x^\np$ to the
function $x\mapsto Q^\n(x;x^\n)$ from \eqref{eq:ipiano-vm-gen-up}, which is
semi-convex with modulus $\sigma\,(m+\frac{1}{\alpha_n})$ with respect to the
metric induced by $A$.
\end{proof}
\bigskip

\paragraph{Verification of Assumption~\ref{ass:Hs}.} 

We define the proper lower semi-continuous function 
\begin{equation} \label{eq:vm-ipiano-lyapunov-fun}
  \map{\F}{\R^\dimN\times\R^\dimN\times \R^{\dimN\times\dimN} \times \R}{\eR}
\quad\text{given by}\quad
  \F(x,y,A,\delta) := H_{(\delta,A)}(x,y)  := h(x) + \delta \norm[A]{x - y}^2
\end{equation}
for some $A\in\spd(\dimN)$ and $\delta \in \R$. Regarding the variables in
Assumption~\ref{ass:Hs}, the $u$-component of $\F$ is treated as
$u=(A,\delta)$, which allows the function $\F$ to change depending on the
metric $A$ and another parameter $\delta$. Convergence will be derived for the
$x$ and $y$ variables only.\\

The following proposition verifies \ref{ass:Hs:descent}, with $d_n =
\norm[2]{x^n-x^{n-1}}$ and $a_n = \gamma_n$.
\begin{PROP}[Descent property] \label{prop:vm-ipiano-Lyapunov-descent}
    Let the variables and parameters be given as in
    Algorithm~\ref{alg:ipiano-vm-gen}. Then, it holds that
    \begin{equation}\label{eq:hd-descent}
      H_{(\delta_n^\sigma,A_{n})}(x^{n+1},x^n) 
        \leq H_{(\delta_n^\sigma,A_n)} (x^{n},x^{n-1})
          -\gamma_n \varsigma (A_n) \norm[2]{x^n-x^{n-1}}^2\,,
    \end{equation}
    and the sequence $\seq[n\in\N]{H_{(\delta_n^\sigma,A_n)} (x^{n},x^{n-1})}$ 
    is monotonically decreasing, which verifies Condition~\ref{ass:Hs:descent} 
    with $\F$ as in \eqref{eq:vm-ipiano-lyapunov-fun}, 
    $d\pit\n=\norm[2]{x\iter\n-x\iter\nm}$, and
    $a_n = \gamma_n \varsigma (A_n)$.
\end{PROP}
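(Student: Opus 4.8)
The plan is to combine the surrogate descent inequality for $G^\n$ from \cref{lem:eq:ipiano:descent-G} with the quadratic upper bound \eqref{eq:ipiano:quad-upper-bnd} for $f$, and then to reorganize the resulting estimate into the two Lyapunov values $H_{(\delta_n^\sigma,A_n)}(\cdot,\cdot)$. First I would start from \eqref{eq:ipiano:descent-G} applied with $A=A_n$ and semi-convexity modulus $m=m_n$, write out the two surrogate values $G^\n(x^{n+1};x^n)$ and $G^\n(x^n;x^n)$ explicitly, and expand the quadratic terms $\norm[A_n]{x^{n+1}-y^\n}^2$ and $\norm[A_n]{x^n-y^\n}^2$ using $y^\n = x^n + \beta_n(x^n-x^{n-1})$. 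Since $x^n - y^\n = -\beta_n(x^n-x^{n-1})$, the second term is simply $\beta_n^2\norm[A_n]{x^n-x^{n-1}}^2$, while the first expands into a squared term $\norm[A_n]{x^{n+1}-x^n}^2$, a cross term $\scal{x^{n+1}-x^n}{x^n-x^{n-1}}_{A_n}$, and the same $\beta_n^2$ contribution, which will cancel.

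Next I would invoke \eqref{eq:ipiano:quad-upper-bnd} to lower-bound the linear term by $\scal{\nabla f(x^n)}{x^{n+1}-x^n} \geq f(x^{n+1}) - f(x^n) - \tfrac{L_n}{2}\norm[A_n]{x^{n+1}-x^n}^2$, which lets me replace $g(x^{n+1}) + f(x^{n+1})$ and $g(x^n) + f(x^n)$ by $h(x^{n+1})$ and $h(x^n)$. At this point the coefficient multiplying $\norm[A_n]{x^{n+1}-x^n}^2$ collapses, by the definition \eqref{eq:ipiano-delta-gamma} of $\delta_n^\sigma$, exactly to $\delta_n^\sigma + \tfrac{\beta_n}{2\alpha_n}$, and the only remaining obstruction is the cross term $-\tfrac{\beta_n}{\alpha_n}\scal{x^{n+1}-x^n}{x^n-x^{n-1}}_{A_n}$. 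The key step, and the one requiring the most care, is to control this cross term with Young's inequality in the $A_n$-metric, $\scal{v}{w}_{A_n} \leq \tfrac12(\norm[A_n]{v}^2 + \norm[A_n]{w}^2)$, which absorbs exactly the surplus $\tfrac{\beta_n}{2\alpha_n}\norm[A_n]{x^{n+1}-x^n}^2$ and leaves a deficit $-\tfrac{\beta_n}{2\alpha_n}\norm[A_n]{x^n-x^{n-1}}^2$ on the right-hand side.

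Reorganizing, and recognizing $\delta_n^\sigma - \tfrac{\beta_n}{2\alpha_n} = \gamma_n$ from \eqref{eq:ipiano-delta-gamma}, yields $H_{(\delta_n^\sigma,A_n)}(x^{n+1},x^n) \leq H_{(\delta_n^\sigma,A_n)}(x^n,x^{n-1}) - \gamma_n\norm[A_n]{x^n-x^{n-1}}^2$. Finally, the defining property $\norm[A_n]{v}^2 \geq \varsigma(A_n)\norm[2]{v}^2$ converts the last term into $\gamma_n\varsigma(A_n)\norm[2]{x^n-x^{n-1}}^2$, giving exactly \eqref{eq:hd-descent}. I expect the bookkeeping of coefficients against the definitions of $\delta_n^\sigma$ and $\gamma_n$, together with the sign tracking in the Young step, to be the main source of error to guard against; there is no conceptual difficulty beyond this algebra.

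For the monotonicity of $\seq[n\in\N]{H_{(\delta_n^\sigma,A_n)}(x^n,x^{n-1})}$, I note that \eqref{eq:hd-descent} already shows $H_{(\delta_n^\sigma,A_n)}(x^{n+1},x^n) \leq H_{(\delta_n^\sigma,A_n)}(x^n,x^{n-1})$, since $\gamma_n\varsigma(A_n) \geq 0$ (indeed positive by $\inf_n\gamma_n>0$ and $\inf_n\varsigma(A_n)>0$). The only remaining issue is that monotonicity compares values at consecutive indices whose metric parameter also changes from $(\delta_n^\sigma,A_n)$ to $(\delta_{n+1}^\sigma,A_{n+1})$; this is precisely bridged by the second condition in \eqref{eq:ipiano-delta-gamma-cond}, namely $\delta_\np^\sigma \norm[A_\np]{x^\np - x^n}^2 \leq \delta_n^\sigma \norm[A_n]{x^\np - x^n}^2$, which yields $H_{(\delta_\np^\sigma,A_\np)}(x^{n+1},x^n) \leq H_{(\delta_n^\sigma,A_n)}(x^{n+1},x^n)$. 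Chaining this with the previous inequality completes the argument.
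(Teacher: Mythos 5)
Your proposal is correct and follows essentially the same route as the paper's proof: expand the surrogate inequality of \cref{lem:eq:ipiano:descent-G} at the minimizer $x^{n+1}$ of $G^n(\cdot;x^n)$, insert the quadratic upper bound \eqref{eq:ipiano:quad-upper-bnd}, bound the cross term $\frac{\beta_n}{\alpha_n}\scal{x^{n+1}-x^n}{x^n-x^{n-1}}_{A_n}$ by Young's inequality in the $A_n$-metric, and rearrange via the definitions \eqref{eq:ipiano-delta-gamma} of $\delta_n^\sigma$ and $\gamma_n$, with $\varsigma(A_n)$ converting to the Euclidean norm; your coefficient bookkeeping (the collapse to $\delta_n^\sigma + \tfrac{\beta_n}{2\alpha_n}$ before the Young step) checks out. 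Your explicit use of the second condition in \eqref{eq:ipiano-delta-gamma-cond} to bridge the change of parameters $(\delta_n^\sigma,A_n)\to(\delta_{n+1}^\sigma,A_{n+1})$ for the monotonicity claim is a welcome detail that the paper's proof leaves implicit.
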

\begin{proof} 
  Combining \eqref{eq:ipiano-vm-gen-up} (in the equivalent form \eqref{eq:ipiano-gen-up-variant-a}) with \eqref{eq:ipiano:quad-upper-bnd} and \eqref{eq:ipiano:descent-G} yields
  \begin{multline*}
    f(x^{n+1}) + g(x^{n+1}) + \frac{\sigma}{2}\left(m + \frac 1{\alpha_n}\right) \norm[A_n]{x^{n+1}-x^n}^2 \\
    \begin{split}
    \leq&\ f(x^n) + \scal{\nabla f(x^n)}{x^{n+1} - x^{n}} + \frac {L_n}{2} \norm[A_n]{x^{n+1}-x^n}^2 \\
    &\ + g(x^n) -\scal{\nabla f(x^n)}{x^{n+1} - x^{n}}+\frac{\beta_n}{\alpha_n}\scal{x^\np - x^\n}{x^\n-x^\nm}_{A_n}  - \frac{1}{2\alpha_n} \norm[A_n]{x^{n+1}-x^n}^2 \\
    = &\  f(x^n) + g(x^n) +\frac{\beta_n}{\alpha_n}\scal{x^\np - x^\n}{x^\n-x^\nm}_{A_n} + \Big(\frac {L_n}{2} - \frac 1{2\alpha_n}\Big) \norm[A_n]{x^{n+1}-x^n}^2 \\
    \end{split}
  \end{multline*}
  and using $\scal{a}{b}_M \leq \frac 12 (\norm[M]{a}^2 + \norm[M]{b}^2)$ for any $a,b\in\R^\dimN$ and $M\in\spd(\dimN)$ implies the following inequality
  \[
    h(x^{n+1})  \leq h(x^n) + \frac{\beta_n}{2\alpha_n} \norm[A_n]{x^\n - x^\nm}^2
    - \frac 12\Bigg(\frac{1+\sigma - \beta_n}{\alpha_n} - (L_n-\sigma m) \Bigg) \norm[A_n]{x^\np - x^n}^2 
  \]
  and rearranging terms yields
  \[
    h(x^{n+1}) + \delta_n^\sigma \norm[A_n]{x^\np - x^n}^2   \leq h(x^n) + \delta_n^\sigma \norm[A_n]{x^\n - x^\nm}^2 - (\delta_n^\sigma - \frac{\beta_n}{2\alpha_n}) \norm[A_n]{x^\n - x^\nm}^2 \,.
  \]
\end{proof}
The parametrization of the step sizes is chosen as in \cite{Ochs15} (see
\cite[Lemma 6.3]{Ochs15} for well-definedness of the parameters.) Therefore, we
obtain the same step size restrictions here, but with the flexibility to change
the metric in each iteration.
\begin{REM}
  The proof shows that instead of \eqref{eq:ipiano-gen-up-variant-a} we could
  also consider
  \begin{equation} \label{eq:ipiano-gen-up-variant-b}
    \arg\min_{x\in\R^N}\ g(x) 
    + \scal{\nabla f(x^\n)}{x-x^\n}
    -\frac{\beta_n}{\alpha_n}\scal{x^\n-x^\nm}{x - x^\n} 
    + \frac{1}{2\alpha_n} \norm[A_n]{x - x^\n}^2  \,,
  \end{equation}
  which yields a slightly different algorithm, but step size restrictions are 
  the same. This expression differs from \eqref{eq:ipiano-gen-up-variant-a} in
  the metric of the inner product with coefficient ${\beta_n}/{\alpha_n}$.
\end{REM}

Next, we prove the relative error condition (Assumption~\ref{ass:Hs:error}) 
with $b_n \equiv 1$ and $\eps_n\equiv 0$, $I=\set{1,2}$, and 
$\theta_{1}=\theta_{2}=\frac 12$. First, we derive a bound on the (limiting)
subgradient of the function $h$ and then for the function $\F$.
\begin{LEM} \label{lem:vm-ipiano-error-bound}
  Let the variables and parameters be given as in Algorithm~\ref{alg:ipiano-vm-gen}. Then, there exists $b>0$ such that 
   \[
      \norm[-]{\partial h(x^\np)} \leq \frac b2 \left(\norm[2]{x^\np - x^\n} + \norm[2]{x^\n - x^\nm} \right) \,.
   \]
\end{LEM}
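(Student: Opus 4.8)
The plan is to produce one explicit element of $\partial h(x^\np)$ from the first-order optimality condition of the subproblem and then bound its Euclidean norm, exploiting that $\norm[-]{\partial h(x^\np)} = \dist(0,\partial h(x^\np)) \le \norm[2]{\xi}$ for \emph{any} $\xi \in \partial h(x^\np)$. First I would read off the optimality condition for $x^{\np} \in \arg\min_x G^\n(x;x^\n)$: differentiating the smooth part of $G^\n$ gives $0 \in \partial g(x^\np) + \nabla f(x^\n) + \frac 1{\alpha_n} A_n(x^\np - y^\n)$, exactly as derived before \eqref{eq:ipiano-vm-gen-up-prox}. Hence $w^\np := -\nabla f(x^\n) - \frac 1{\alpha_n} A_n(x^\np - y^\n) \in \partial g(x^\np)$. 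Because $f$ is $\cd 1$-smooth, the sum rule \cite[Ex. 8.8]{Rock98} gives $\partial h(x^\np) = \partial g(x^\np) + \nabla f(x^\np)$, so $\xi^\np := \nabla f(x^\np) + w^\np = \nabla f(x^\np) - \nabla f(x^\n) - \frac 1{\alpha_n} A_n(x^\np - y^\n)$ is a genuine element of $\partial h(x^\np)$.

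Next I would substitute $x^\np - y^\n = (x^\np - x^\n) - \beta_n(x^\n - x^\nm)$ and apply the triangle inequality. The gradient difference $\norm[2]{\nabla f(x^\np) - \nabla f(x^\n)}$ is bounded by $L \norm[2]{x^\np - x^\n}$ using the $L$-Lipschitz continuity of $\nabla f$ on $\dom g$. For the remaining contribution I use that $A_n \preceq \opid$ together with symmetry and positive definiteness forces all eigenvalues of $A_n$ into $(0,1]$, hence $\norm[2]{A_n v} \le \norm[2]{v}$ for every $v$; this turns $\frac 1{\alpha_n}\norm[2]{A_n(x^\np - y^\n)}$ into $\frac 1{\alpha_n}\bigl(\norm[2]{x^\np-x^\n} + \beta_n\norm[2]{x^\n - x^\nm}\bigr)$.

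Finally I would make the constant uniform in $\n$. Since $\inf_n\alpha_n =: \bar\alpha > 0$ and $\beta_n \le \tfrac{1+\sigma}{2} \le 1$ (the bound established in the lemma immediately preceding this one, itself a consequence of $\inf_n\gamma_n>0$), the coefficient of $\norm[2]{x^\np - x^\n}$ is at most $L + 1/\bar\alpha$ and that of $\norm[2]{x^\n - x^\nm}$ is at most $1/\bar\alpha$. Choosing $b := 2\bigl(L + 1/\bar\alpha\bigr)$ majorizes both coefficients by $b/2$ and yields the claimed inequality. There is no deep obstacle here; the only points demanding genuine care are (i) verifying the operator-norm bound $\norm[2]{A_n v}\le\norm[2]{v}$ from $A_n \preceq \opid$, rather than carelessly transferring it to the $A_n$-metric, and (ii) ensuring $b$ is independent of $\n$, which hinges precisely on the uniform bounds $\bar\alpha>0$ and $\sup_n\beta_n<\infty$ guaranteed by the standing assumptions $\inf_n\alpha_n>0$ and $\inf_n\gamma_n>0$.
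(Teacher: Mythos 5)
Your proof is correct and takes essentially the same route as the paper's: both read off the explicit subgradient $\frac{1}{\alpha_n}A_n(x^\n - x^\np) + \frac{\beta_n}{\alpha_n}A_n(x^\n - x^\nm) - \nabla f(x^\n) \in \partial g(x^\np)$ from the optimality condition \eqref{eq:ipiano-vm-gen-up-prox}, add $\nabla f(x^\np)$ via the sum rule to land in $\partial h(x^\np)$, and then bound the norm using the Lipschitz continuity of $\nabla f$ together with $A_n \preceq \opid$. If anything, your write-up is more explicit than the paper's terse final step, since you spell out how the uniformity of $b$ in $\n$ follows from $\inf_n \alpha_n > 0$ and $\beta_n \le \tfrac{1+\sigma}{2}$.
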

\begin{proof}
  \eqref{eq:ipiano-vm-gen-up-prox} can be used to specify an element from $\partial g(x^\np)$, namely
  \[
    A_n\frac{x^\n-x^\np}{\alpha_n} - \nabla f(x^\n) + \frac{\beta_n}{\alpha_n}A_n(x^\n - x^\nm) \in \partial g(x^\np)\,,
  \]
  which implies
  \[
    \norm[-]{\partial h(x^\np)} = \norm[-]{\nabla f(x^\np) + \partial g(x^\np)} \leq \left( \frac{\norm{A_n}}{\alpha_n} + L\right)\norm[2]{x^\np - x^\n} + \frac{\beta_n}{\alpha_n} \norm{A_n} \norm[2]{x^\n - x^\nm} \,.
  \]
  Using the Lipschitz continuity of $\nabla f$ and $A\preceq \opid$, the statement is verified.
\end{proof}
\begin{PROP} \label{prop:vm-ipiano-error-bound} 
  Let the variables and parameters be given as in 
  Algorithm~\ref{alg:ipiano-vm-gen}. Then, there exists $b>0$ such that 
  \[
    \norm[-]{\partial \F(x^\np, x^\n, A_\np, \delta^\sigma_\np)} 
    \leq \frac b2 \left(\norm[2]{x^\np - x^\n} + \norm[2]{x^\n - x^\nm} \right) \,,
  \]
  which verifies Condition~\ref{ass:Hs:error} with 
  $\F$ as in \eqref{eq:vm-ipiano-lyapunov-fun}, 
  $d\pit\n=\norm[2]{x\iter\n-x\iter\nm}$, 
  $b\pit\n\equiv1$, $I=\set{1,2}$, 
  $\theta_1=\theta_2=\frac 12$, and $\eps\pit\n\equiv0$.
\end{PROP}
\begin{proof}
  Thanks to summation rule of the limiting subdifferential for the sum of 
  $(x,y,A,\delta)\mapsto h(x)$ and the smooth function 
  $(x,y,A,\delta)\mapsto \delta\norm[A]{x^\np - x^\n}^2$, 
  we can compute the limiting subdifferential by estimating the partial 
  derivatives. We obtain
  \begin{gather} \label{eq:vm-ipiano-partial-subdiff}
    \partial_x \F (x,y,A,\delta) = \partial h(x) + 2\delta A(x-y)\,,\qquad 
    \partial_y \F (x,y,A,\delta) = \nabla_y \F (x,y,A,\delta) = -2A\delta A(x-y) \\
    \partial_A \F (x,y,A,\delta) = \nabla_A \F (x,y,A,\delta) = \delta(x-y)\tensor (x-y)\,, \qquad 
    \partial_\delta \F (x,y,A,\delta) = \nabla_\delta \F (x,y,A,\delta) = \norm[A]{x - y}^2 \,.
  \end{gather}
  In order to verify \ref{ass:Hs:error}, let 
  $\F^\np := \F(x^\np, x^\n, A_\np, \delta^\sigma_\np)$ 
  and we use $\norm[2]{w^\np} \leq \norm[2]{w_x^\np} + \norm[2]{w_y^\np} 
                                 + \norm[2]{w_A^\np} + \norm[2]{w_\delta^\np}$ 
  where $w\iter\np\in \partial \F^\np$ with block coordinates 
  $w_x^\np\in \partial_x \F^\np$, 
  $w_y^\np=\nabla_y \F^\np$, 
  $w_A^\np=\nabla_A \F^\np$, and 
  $w_\delta^\np = \nabla_\delta \F^\np$. We obtain the relative error bound 
  \ref{ass:Hs:error} using Lemma~\ref{lem:vm-ipiano-error-bound}, 
  $A_\np\preceq \opid$, boundedness of $\delta_\np^\sigma$, and the fact that 
  for a sequence $r_n\to 0$ for some $n_0\in\N$ it holds that 
  $r_n^2 \leq r_n$ for all $n\geq n_0$. In detail, we use 
  \[
    \norm[2]{w_A^\np} 
    \leq \delta^\sigma_\np \sum_{i,j} \abs{x_i^\np-x_i^\n}\cdot\abs{x_j^\np - x_j^\n} 
    \leq c \sum_{i,j} \abs{x_j^\np - x_j^\n} 
    \leq c c^\prime \sum_i \norm[2]{x^\np - x^\n} 
    \leq c c^\prime c^{\prime\prime} \norm[2]{x^\np - x^\n} \,,
  \]
  where $c$ is the maximal (over the coordinates $i$) bound for the converging
  sequences $\abs{x_i^\np - x_i^\n}\to 0$ as $\n \to\infty$, the dimensionally
  dependent constant $c^{\prime}=\sqrt{\dimX}$ provides the norm equivalence of
  $\norm[1]{\cdot}$ and $\norm[2]{\cdot}$, and $c^{\prime\prime}=\dimX$
  simplifies the summation. 
\end{proof}

The next proposition shows that converging subsequences of the sequence generated
by Algorithm~\ref{alg:ipiano-vm-gen} always $\F$-converge to the limit point, 
i.e. $\omega(x\iter0,u\iter0)=\omega_{\F}(x\iter0,u\iter0)$ is automatically
satisfied, which implies \ref{ass:Hs:cont} when the algorithm generates a 
bounded sequence.
\begin{PROP}\label{prop:vm-ipiano-Hs-cont}
  Let the variables and parameters be given as in 
  Algorithm~\ref{alg:ipiano-vm-gen}. Then, any convergent subsequence 
  $\seq[j\in\N]{(x^{n_j+1},x^{n_j},A_{n_j},\delta^\sigma_{n_j})}$ 
  actually $\F$-converges to a point $(x^*, x^*, A_*, \delta_*^\sigma)$, 
  which verifies Condition~\ref{ass:Hs:cont} for a bounded sequence 
  $\seq[\n\in\N]{(x\iter\n,u\iter\n)}$ with 
  $\F$ as in \eqref{eq:vm-ipiano-lyapunov-fun}.
\end{PROP}
\begin{proof}
  Let $(x^{n_j+1},x^{n_j},A_{n_j},\delta^\sigma_{n_j})$ be a subsequence 
  converging to some $(x^*, x^*, A_*, \delta_*^\sigma)$. 
  
  The continuity statement follows ($Q^\n(x^\np;x^n) \leq Q^\n(x;x^\n)$ for all 
  $x\in\R^\dimX$ from \eqref{eq:ipiano-vm-gen-up}) from
  \begin{multline*}
     g(x^{n_j+1}) + \scal{\nabla f(x^{n_j})}{x^{n_j+1}-x^{n_j}} 
      + \frac{1}{2\alpha_{n_j}} \norm[A_{n_j}]{x^{n_j+1}-y^{n_j}}^2  \\
      \leq 
        g(x^*) + \scal{\nabla f(x^{n_j})}{x^*-x^{n_j}} 
          + \frac{1}{2\alpha_{n_j}} \norm[A_{n_j}]{x^*-y^{n_j}}^2 \,.
  \end{multline*}
  Due to Lemma~\ref{lem:simple-conv}\ref{lem:simple-conv:iterates-converge} 
  $\norm{x^{n_j+1} -x^{n_j}} \to 0$, hence $\norm{y^{n_j} - x^{n_j}} \to 0$, 
  which shows that $y^{n_j} \to x^*$, as $j\to \infty$. Moreover, since 
  $f$ is continuously differentiable, $\nabla f(x^{n_j})$ converges as 
  $j\to\infty$, hence it is bounded. Therefore considering the limit superior 
  of $j\to\infty$ of both sides of the inequality shows that 
  $\limsup_{j\to\infty} g(x^{n_j+1}) \leq g(x^*)$, which combined with 
  the lower semi-continuity of $g$ implies $\lim_{j\to\infty} g(x^{n_j+1}) 
  = g(x^*)$, and thus the statement follows, since $f$ is continuously 
  differentiabe.
\end{proof}

Using the results that we just derived, we can prove convergence of the
variable metric iPiano method (Algorithm~\ref{alg:ipiano-vm-gen}) to a critical
point. Unlike the abstract convergence theorems in \cite{ABS13,FGP14,OCBP14},
the finite length property is derived for the coordinates from a subspace only,
which allows for a lot of flexibility. Critical points are characterized in the
proof of Proposition~\ref{prop:vm-ipiano-error-bound} (see
\eqref{eq:vm-ipiano-partial-subdiff}), where zero in the partial
subdifferential (actually the partial derivative) with respect to $y$, $A$, or
$\delta$ implies $x=y$ without imposing conditions on the $\delta$- or
$A$-coordinate. Thus, we have
\[
      0\in \partial \F(x,y,A,\delta) \Leftrightarrow \Big( 0 \in \partial h(x) \times  0_y \times 0_A \times 0_\delta \ \text{and}\ x=y \Big) \Leftrightarrow \Big( 0 \in \partial h(x)\ \text{and}\ x=y\Big) \,,
\]
where we indicate the size of the zero variables by the respective coordinate variable. As a consequence, $0\in \F(x^*,x^*,\delta,A) \Leftrightarrow 0 \in \partial h(x^*)$. These considerations lead to the following convergence theorem.
\begin{THM}\label{thm:vm-ipiano-KL-theorem}
  Suppose $\F$ in \eqref{eq:vm-ipiano-lyapunov-fun}, \eqref{eq:problem-class} 
  is a proper lower semi-continuous \KL function that is bounded from below. 
  Let $\seq[n\in\N]{x^n}$ be generated by Algorithm~\ref{alg:ipiano-vm-gen} 
  and bounded with valid variables and parameters as in the description of this
  algorithm. Then, the sequence $\seq[n\in\N]{x^n}$ satisfies 
  \begin{equation} \label{eq:KL-theorem-finite-length-dist}
    \sum_{k=0}^{\infty} \norm[2]{x^{k+1}-x^{k}} < +\infty\,,
  \end{equation}
  and $\seq[\n\in\N]{x^\n}$ converges to a critical point of \eqref{eq:problem-class}.
\end{THM}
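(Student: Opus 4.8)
The plan is to verify Assumption~\ref{ass:Hs} for the Lyapunov function $\F$ and then invoke Theorem~\ref{thm:KL-theorem-descent}. I would identify the abstract iterate with the pair $(x^\n,x^\nm)$, the parameter with $u^\n=(A_\n,\delta_\n^\sigma)$, and set $d_\n=\norm[2]{x^\n-x^\nm}$, $a_\n=\gamma_\n\varsigma(A_\n)$, $b_\n=1$ and $\eps_\n=0$. Boundedness of $\seq[\n\in\N]{x^\n}$---needed throughout---follows as in Proposition~\ref{prop:vm-ipiano-Hs-cont} from coercivity of $h$, monotonicity of $\seq[\n\in\N]{H_{(\delta_\n^\sigma,A_\n)}(x^\n,x^\nm)}$, and boundedness of $\seq[\n\in\N]{A_\n}$, $\seq[\n\in\N]{\delta_\n^\sigma}$ (here $\delta_\n^\sigma=\gamma_\n+\tfrac{\beta_\n}{2\alpha_\n}>0$, hence $H\ge h$).

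The three structural hypotheses are essentially the propositions already proved. Proposition~\ref{prop:vm-ipiano-Lyapunov-descent} gives \ref{ass:Hs:descent}, but phrased through $H_{(\delta_\n^\sigma,A_\n)}(x^\np,x^\n)$; to rewrite the left-hand side as $\F(x^\np,x^\n,A_\np,\delta_\np^\sigma)$ I would use the second inequality in \eqref{eq:ipiano-delta-gamma-cond}, namely $\delta_\np^\sigma\norm[A_\np]{x^\np-x^\n}^2\le\delta_\n^\sigma\norm[A_\n]{x^\np-x^\n}^2$, which is exactly the bookkeeping needed to switch metrics between consecutive indices. Proposition~\ref{prop:vm-ipiano-error-bound} is \ref{ass:Hs:error} verbatim with $b_\n=1$, $\eps_\n=0$, and Proposition~\ref{prop:vm-ipiano-Hs-cont} is \ref{ass:Hs:cont}; its second claim (every convergent subsequence is $\F$-convergent), together with the corollary following Lemma~\ref{lem:critical-point-set-additional}, secures the hypothesis $\omega(x^0,u^0)=\overline\omega(x^0,u^0)$.

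For \ref{ass:Hs:contract} the scalar requirements are routine: $b_\n=1\not\in\ell_1$; $\sup_\n b_\n a_\n=\sup_\n\gamma_\n\varsigma(A_\n)<\infty$ since $\gamma_\n$ is bounded and $A_\n\preceq\opid$ forces $\varsigma(A_\n)\le1$; and $\inf_\n a_\n\ge(\inf_\n\gamma_\n)(\inf_\n\varsigma(A_\n))>0$ by the standing assumptions. The increment bound $\norm[2]{x^\np-x^\n}\in\o(d_\n)$ enters the proof of Theorem~\ref{thm:KL-theorem-descent} only to pass from $\sum_\n d_\n<\infty$ to summability of the increments; since here $d_\n=\norm[2]{x^\n-x^\nm}$ is itself the norm of an $x$-increment, this passage is immediate up to an index shift, so the \KL estimate of the theorem delivers $\sum_\n\norm[2]{x^\np-x^\n}<\infty$ directly for the $x$-block. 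Hence $\seq[\n\in\N]{x^\n}$ is Cauchy and converges to some $x^*$.

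The delicate step, which I would treat most carefully, is identifying $x^*$ as a critical point. Item~\ref{thm:KL-theorem-critical-point} of the abstract theorem requires a convergent parameter sequence, which we lack: $A_\n$ and $\delta_\n^\sigma$ are merely bounded. I would argue directly instead. From $\sum_\n d_\n<\infty$ we get $d_\n\to0$, so \ref{ass:Hs:error} yields $\norm[-]{\partial\F(x^\np,x^\n,A_\np,\delta_\np^\sigma)}\to0$; choosing a subsequence along which $(x^\n,x^\nm,A_\n,\delta_\n^\sigma)$ converges (boundedness) and is $\F$-convergent (Proposition~\ref{prop:vm-ipiano-Hs-cont}), Lemma~\ref{lem:lazy-slope-liminf} gives $0\in\partial\F$ at the limit $(x^*,x^*,A_*,\delta_*^\sigma)$. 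The product structure of $\F$ then absorbs the non-convergence of the metric: as recorded before the theorem, $0\in\partial\F(x^*,x^*,A,\delta)\Leftrightarrow0\in\partial h(x^*)$ for any $A,\delta$, so $x^*$ is critical for \eqref{eq:problem-class}. The main obstacle is thus not checking \ref{ass:Hs:descent}--\ref{ass:Hs:cont}, but reconciling the abstract contraction bookkeeping with the fact that only the $x$-coordinates converge---precisely what the subdifferential characterization of $\F$ is designed to handle.
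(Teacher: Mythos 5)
Your proposal is correct and takes essentially the same route as the paper: the paper's proof is exactly to verify Assumption~\ref{ass:Hs} with $d_n=\norm[2]{x^n-x^{n-1}}$, $a_n=\gamma_n\varsigma(A_n)$, $b_n=1$, $\eps_n=0$ via Propositions~\ref{prop:vm-ipiano-Lyapunov-descent}, \ref{prop:vm-ipiano-error-bound} and~\ref{prop:vm-ipiano-Hs-cont} and then invoke Theorem~\ref{thm:KL-theorem-descent}. You merely make explicit what the paper leaves implicit --- the use of the second inequality in \eqref{eq:ipiano-delta-gamma-cond} to switch metrics between consecutive indices in \ref{ass:Hs:descent}, the observation that the increment bound in \ref{ass:Hs:contract} is only needed to pass from $\sum_n d_n<\infty$ to summable increments (automatic here by an index shift), and the direct critical-point identification via Lemma~\ref{lem:lazy-slope-liminf} together with the characterization $0\in\partial\F(x^*,x^*,A,\delta)\Leftrightarrow 0\in\partial h(x^*)$, which substitutes for item~\ref{thm:KL-theorem-critical-point} of the abstract theorem since $(A_n,\delta_n^\sigma)$ is only bounded, precisely as the paper's discussion preceding the theorem intends.
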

\begin{proof}
  Verify the condition in Assumption~\ref{ass:Hs} and apply 
  Theorem~\ref{thm:KL-theorem-descent}. Set $d_n = \norm[2]{x^\n-x^\nm}$, 
  $a_n = \gamma_n\varsigma (A_n)$, $b_n\equiv1$, $\eps_n\equiv0$, $I=\set{1,2}$,
  $\theta_1=\theta_2=\frac 12$ then \ref{ass:Hs:descent}, \ref{ass:Hs:error}, 
  and \ref{ass:Hs:cont} are proved in Propositions~\ref{prop:vm-ipiano-Lyapunov-descent}, 
  \ref{prop:vm-ipiano-error-bound}, and~\ref{prop:vm-ipiano-Hs-cont}, and 
  \ref{ass:Hs:distance}, \ref{ass:Hs:params} are immediate from the bounds on 
  the parameters.
\end{proof}
\begin{REM}\label{rem:ipiano-semi-algebraic-instead-KL}
  Thanks to \cite{BDL06,BDLS07} the KL property holds for proper lower
  semi-continuous functions that are definable in an $\o$-minimal structure,
  e.g., semi-algrabraic functions. Since $\o$-minimal structures are stable
  under various operations, $\F$ is a KL function if $h$ is definable in an
  $\o$-minimal structure. Therefore, Theorem~\ref{thm:vm-ipiano-KL-theorem} can
  be applied to, for instance, a proper lower semi-continuous semi-algebraic
  function $h$ in \eqref{eq:problem-class}.
\end{REM}

\section{Block coordinate variable metric iPiano} \label{sec:bcvm-iPiano}

We consider a structured nonsmooth, nonconvex optimization problem with a
proper lower semi-continuous extended valued function $\map{h}{\R^{\dimN}}{\eR}$, 
$N\geq 1$, that is bounded from below by some value $\underline h >-\infty$:
\begin{equation}\label{eq:problem-class-block}
\min_{\bx \in \R^N} \; h(\bx)\,,
  \quad h(\bx) := f(\bx_1, \bx_2,\ldots,\bx_\dimJ) + \sum_{i=1}^\dimJ g_i(\bx_i)\,,
\end{equation}
where the $\dimN$ dimensions are partitioned into $\dimJ$ blocks of (possibly
different dimensions) $(\dimN_1,\ldots,\dimN_\dimJ)$, i.e., $\bx\in\R^\dimN$
can be decomposed as $\bx=(\bx_1,\ldots,\bx_\dimJ)$. The function $\map
f{\R^N}{\R}$ is assumed to be block $\cd 1$-smooth (possibly nonconvex) with
block Lipschitz continuous gradient on $\dom g_1\times \dom g_2\times \ldots
\times \dom g_\dimJ$, i.e., $\bx_i \mapsto \nabla_{\bx_i} f(\bx_1,\ldots,
\bx_i,\ldots, \bx_\dimJ)$ is Lipschitz continuous. Further, let the function
$\map {g_i} {\R^{\dimN_i}}{\eR}$ be simple (possibly nonsmooth and nonconvex)
and prox-bounded. 

Working with block algorithms can be simplified by an appropriate notation,
which we introduce now. We denote by $\bx_{\ov i}:=
(\bx_1,\ldots,\bx_{i-1},\bx_{i+1},\ldots,\bx_\dimJ)$ the vector containing all
blocks but the $i$th one. 

Algorithm~\ref{alg:ipiano-bvm-gen} is a straightforward extension of 
Algorithm~\ref{alg:ipiano-vm-gen} to problems of class 
\eqref{eq:problem-class-block} with a block coordinate structure. In each 
iteration, the algorithm applies one iteration of iPiano to the problem 
restricted to a certain block. The formulation of the algorithm allows blocks 
to be updated in an almost arbitrary order. In the end, the only restriction 
is that each block must be updated infinitely often, which is a more flexible
rule than in \cite{PS16}.

We seek for a critical point $\bx^*\in \dom h$ of $h$, which in this case is characterized by
\[
  -\nabla f(\bx) \in \partial g_1(\bx_1) \times \partial g_2(\bx_2) \times \ldots \times \partial g_\dimJ(\bx_\dimJ) \,.
\]
\begin{figure}[h!]
\centering
\setlength{\fboxrule}{1pt}
\fbox{
\begin{minipage}{0.95\textwidth}
\begin{ALG}\label{alg:ipiano-bvm-gen}
\
\textbf{Block coordinate variable metric iPiano}
\begin{itemize}
\item \textbf{Parameter}: Let for all $i\in\set{1,\ldots,\dimJ}$ 
\begin{itemize}
  \item $\seq[n\in\N]{\alpha_{n,i}}$ be a sequence of positive step size parameters,
  \item $\seq[n\in\N]{\beta_{n,i}}$ be a sequence of non-negative parameters, and
  \item $\seq[\n\in\N]{A_{\n,i}}$ be a sequence of matrices $A_{\n,i}\!\in\spd(\dimN_i)$ such that $A_{n,i}\! \preceq \opid$ and {$\inf_{n,i} \varsigma (A_{n,i}) > 0$}.
  \item Let $\sigma_i = 1$ if $g_i$ is semi-convex and $\sigma_i= 0$ otherwise.
\end{itemize}
\item \textbf{Initialization}: Choose a starting point $x^0 \in \dom h$ and set $x^{-1}= x^0$.
\item \textbf{Iterations $(n\ge 0)$}:  Update: Select $j_n \in \set{1,\ldots, \dimJ}$ and compute
  \begin{equation}\label{eq:ipiano-bvm-gen-up}
    \begin{split}
    \by^\n_{j_n} =&\ \bx^\n_{j_n} + \beta_{n,j_n}(\bx^\n_{j_\n} - \bx^\nm_{j_\n})  \\
    \bx^\np_{j_n} \in&\ {\arg\min_{x\in\R^{N_{j_n}}}}\ Q^{j_\n} (x;\bx^\n_{j_n}) \\[-2mm]
        &\ {\color{white}\arg\min_{x\in\R^{N_{j_n}}}}\  Q^{j_\n} (x;\bx^\n_{j_n}) := g_{j_n}(x) + \scal{\nabla_{\bx_{j_n}} f(\bx^\n)}{x-\bx^\n_{j_n}} + \frac{1}{2\alpha_{n,j_n}} \norm[A_{n,j_n}]{x - \by^\n_{j_n}}^2   \\[-1mm]
    \bx^\np_{\ov j_n} =&\ \bx^\n_{\ov j_n} \\
    \bx^\n_{\ov j_n} =&\ \bx^\nm_{\ov j_n} \,,
    \end{split}
  \end{equation}
  where $L_n>\sigma m_n$ is determined such that 
  \begin{equation} \label{eq:bipiano:quad-upper-bnd}
    f(\bx^{n+1}) \leq f(\bx^n) + \scal{\nabla_{\bx_{j_n}} f(\bx^n)}{\bx_{j_n}^\np - \bx_{j_n}^{n}} + \frac {L_n}{2} \norm[A_{n,j_n}]{\bx_{j_n}^\np-\bx_{j_n}^n}^2
  \end{equation}
  holds and $\alpha_{n,j_n},\beta_{n,j_n}$ with $\inf_{n,j} \alpha_{n,j}>0$ are chosen such that 
    \begin{equation} \label{eq:bipiano-delta-gamma}
      \delta_{n,j_n}^{\sigma_{j_n}}:= \frac 12\left(\frac{1+\sigma_{j_n} - \beta_{n,j_n}}{\alpha_{n,j_n}} - (L_n-\sigma_{j_n} m_\n) \right)\quad\text{and}\quad
      \gamma_{n,j_n} := \delta_{n,j_n}^{\sigma_{j_n}} - \frac{\beta_{n,j_n}}{2\alpha_{n,j_n}} 
    \end{equation}
  satisfy 
  \begin{equation} \label{eq:bipiano-delta-gamma-cond}
    \inf_{\n,j} \gamma_{\n,j}>0\quad\text{and}\quad \delta_{\np,j_n}^{\sigma_{j_n}} \norm[A_{\np,j_n}]{\bx_{j_n}^\np - \bx_{j_n}^n}^2 \leq \delta_{n,j_n}^{\sigma_{j_n}} \norm[A_{n,j_n}]{\bx_{j_n}^\np - \bx_{j_n}^n}^2  \,,
  \end{equation}
  where $m_n\in\R$ denotes the semi-convexity modulus of $g_{j_n}$ w.r.t. $A_{j_\n}\in\spd(\dimN_{j_n})$ (if $\sigma_{j_n}=1$).\\
  Set $A_{\np,\ov j_n} = A_{\n,\ov j_n}$, $\delta^{\sigma_{j_n}}_{\np,\ov j_n} = \delta^{\sigma_{j_n}}_{\n,\ov j_n}$.
  \end{itemize}
\end{ALG}
\end{minipage}
}
\end{figure}
In fact if we apply Algorithm~\ref{alg:ipiano-bvm-gen} to
\eqref{eq:problem-class} from the preceding section (i.e. $\dimJ=1$), we
recover the variable metric iPiano algorithm
(Algorithm~\ref{alg:ipiano-vm-gen}). For $\beta_{n,i} = 0$ for all $n\in\N$ and
$i\in\set{1,\ldots,\dimJ}$, the algorithm is known as Block Coordinate Variable
Metric Forward-Backward (BC-VMFB) algorithm \cite{CPR16}. If, additionally
$A_{n,i} = \opid$ for all $n$ and $i$, the algorithm is referred to as Proximal
Alternating Linearized Minimization (PALM) \cite{BST14}. An inertial block 
coordinate version (without variable metric) is proposed in \cite{PS16} as iPALM.

\paragraph{Verification of Assumption~\ref{ass:Hs}.} 
In order to prove convergence of this algorithm, we can make use of the results
of the preceding section for the variable metric iPiano algorithm. We consider
a function
\begin{equation} \label{eq:bvm-ipiano-lyapunov}
  \map{\F}{\R^{\dimN}\times\R^\dimN \times \R^{\dimN_1\times\dimN_1}\times \ldots\times \R^{\dimN_\dimJ\times\dimN_\dimJ} \times \R^\dimJ}{\eR}
\end{equation}
given by (set $\bA := (A_1,\ldots, A_\dimJ)$, $A_i\in\R^{\dimN_i\times\dimN_i}$, $\bdelta := (\delta_1,\ldots, \delta_\dimJ)$)
\[
  \F(\bx, \by, \bA,\bdelta) = \bH_{\bdelta,\bA}(\bx,\by) := h(\bx) + \sum_{i=1}^\dimJ \delta_i \norm[A_i]{\bx_i - \by_i}^2 \,.
\]

\begin{THM}\label{thm:bvm-ipiano-KL-theorem}
  Suppose $\F$ in \eqref{eq:bvm-ipiano-lyapunov}, \eqref{eq:problem-class-block} 
  is a proper lower semi-continuous \KL function (e.g. $h$ is
  semi-algebraic; cf. Remark~\ref{rem:ipiano-semi-algebraic-instead-KL}) that
  is bounded from below. Let $\seq[n\in\N]{\bx^n}$ be generated by
  Algorithm~\ref{alg:ipiano-bvm-gen} and bounded with valid variables and 
  parameters as in the description of this algorithm. Assume that each block 
  coordinate is updated after a finite number of $n^\prime\in\N$ steps. 
  Then, the sequence $\seq[n\in\N]{\bx^n}$ satisfies 
  \begin{equation} \label{eq:KL-theorem-finite-length-dist}
    \sum_{k=0}^{\infty} \norm[2]{\bx^{k+1}-\bx^{k}} < +\infty\,,
  \end{equation}
  and $\seq[\n\in\N]{\bx^\n}$ converges to a critical point of \eqref{eq:problem-class-block}.
\end{THM}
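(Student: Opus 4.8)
The plan is to reduce everything to the abstract machinery exactly as in the proof of Theorem~\ref{thm:vm-ipiano-KL-theorem}: I would verify that the Lyapunov function $\F(\bx,\by,\bA,\bdelta)=\bH_{\bdelta,\bA}(\bx,\by)$ satisfies Assumption~\ref{ass:Hs} and then apply Theorem~\ref{thm:KL-theorem-descent}, with the identifications $d_n=\norm[2]{\bx^n-\bx^{n-1}}$, $b_n=1$, $\eps_n=0$, and $a_n=\gamma_{n,j_n}\varsigma(A_{n,j_n})$. The point is that a single iteration of Algorithm~\ref{alg:ipiano-bvm-gen} is nothing but one iteration of variable metric iPiano applied to the active block $j_n$ while all remaining blocks are frozen, so the single-block Propositions~\ref{prop:vm-ipiano-Lyapunov-descent}, \ref{prop:vm-ipiano-error-bound} and~\ref{prop:vm-ipiano-Hs-cont} should transfer block-wise. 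The bookkeeping $\bx^{n+1}_{\ov{j_n}}=\bx^n_{\ov{j_n}}$ and $\bx^n_{\ov{j_n}}=\bx^{n-1}_{\ov{j_n}}$ in \eqref{eq:ipiano-bvm-gen-up} is precisely what aligns the inertial memory so that $\bx^n-\bx^{n-1}$ is supported on the active block and $d_n$ measures exactly that block's step.

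I would begin with the three conditions that I expect to be routine. For the sufficient decrease \ref{ass:Hs:descent} I would run the computation of Proposition~\ref{prop:vm-ipiano-Lyapunov-descent} on the block subproblem $G^{j_n}(\cdot;\bx^n_{j_n})$ (invoking Lemma~\ref{lem:eq:ipiano:descent-G} for the semi-convex case), observing that every summand of $\bH_{\bdelta,\bA}$ with index $i\neq j_n$ is unchanged across the step because those blocks, metrics $A_i$ and parameters $\delta_i$ are held fixed; condition \eqref{eq:bipiano-delta-gamma-cond} then absorbs the change of $A_{j_n}$ and $\delta_{j_n}$ on the active block exactly as in the single-block case, giving $\F(\bx^{n+1},\bx^n,\bA_{n+1},\bdelta_{n+1})+\gamma_{n,j_n}\varsigma(A_{n,j_n})\,d_n^2\leq\F(\bx^n,\bx^{n-1},\bA_n,\bdelta_n)$. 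Continuity \ref{ass:Hs:cont} follows the argument of Proposition~\ref{prop:vm-ipiano-Hs-cont}: coercivity of $h$ and the monotone decrease of $\F$ give a bounded sequence with a convergent subsequence, and the optimality of the block update upgrades convergence to $\F$-convergence. Finally \ref{ass:Hs:contract} reduces to the uniform parameter bounds $\inf_{n,j}\gamma_{n,j}>0$ and $\inf_{n,j}\varsigma(A_{n,j})>0$ guaranteed in the algorithm.

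The hard part will be the relative error condition \ref{ass:Hs:error}, and this is where the block structure really bites. After step $n$ only the active block carries a freshly certified subgradient, for which Lemma~\ref{lem:vm-ipiano-error-bound} applies verbatim; but $\partial\F(\bx^{n+1},\bx^n,\cdot,\cdot)$ demands a small element in \emph{every} block. For an inactive block $i\neq j_n$ I would use the element of $\partial g_i(\bx^{n+1}_i)=\partial g_i(\bx^{m_i+1}_i)$ furnished by the optimality condition \eqref{eq:ipiano-vm-gen-up-prox} at the last iteration $m_i\le n$ at which block $i$ was touched, and pair it with the \emph{current} partial gradient $\nabla_{\bx_i}f(\bx^{n+1})$; block Lipschitz continuity then bounds the mismatch $\norm{\nabla_{\bx_i}f(\bx^{n+1})-\nabla_{\bx_i}f(\bx^{m_i})}$ by $L\sum_{k=m_i}^{n}\norm[2]{\bx^{k+1}-\bx^k}$. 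The delicate issue is that this telescoping runs back to the previous visit of block $i$ and so does not obviously collapse into the tight form $\tfrac b2(d_{n+1}+d_n)$ required by \ref{ass:Hs:error}; reconciling it is the crux of the proof, and it is exactly here that the essentially-cyclic assumption (each block updated infinitely often) together with the already-summable increments coming from the finite-length part of Theorem~\ref{thm:KL-theorem-descent} must be brought to bear to keep the aggregated inactive-block error under control. Once all four items are in place, Theorem~\ref{thm:KL-theorem-descent} delivers the finite-length estimate and convergence of $\seq[\n\in\N]{\bx^\n}$ to a critical point of \eqref{eq:problem-class-block}, the criticality following from the characterization $0\in\partial\F\Leftrightarrow 0\in\partial h(\bx)$ recorded before Theorem~\ref{thm:vm-ipiano-KL-theorem} and from Lemma~\ref{lem:critical-point-set-additional}.
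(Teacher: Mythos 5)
Your skeleton is exactly the paper's route: the same Lyapunov function $\F(\bx,\by,\bA,\bdelta)=\bH_{\bdelta,\bA}(\bx,\by)$, the same identifications $a_n=\gamma_{n,j_n}\varsigma(A_{n,j_n})$, $b_n=1$, $\eps_n=0$ (the paper takes $d_n=\norm[2]{\bx^n_{j_n}-\bx^{n-1}_{j_n}}$, which agrees with your full-vector choice thanks to the bookkeeping you correctly flag), and the same blockwise transfer of Propositions~\ref{prop:vm-ipiano-Lyapunov-descent} and~\ref{prop:vm-ipiano-Hs-cont} for \ref{ass:Hs:descent}, \ref{ass:Hs:cont}, \ref{ass:Hs:contract}. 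The divergence is \ref{ass:Hs:error}. The paper does \emph{not} attempt your telescoping back to each block's last visit: it disposes of the relative error condition in one sentence, arguing that since the nonsmooth part of \eqref{eq:problem-class-block} is additively separable, the subdifferential of $\F$ factors as a Cartesian product over blocks, and then invokes Proposition~\ref{prop:vm-ipiano-error-bound} ``analogously'' to obtain $\norm[-]{\partial \F(\bx^{n+1},\by^{n+1},\bA_{n+1},\bdelta_{n+1})}\leq \frac b2\bigl(\norm[2]{\bx^{n+1}_{j_n}-\bx^{n}_{j_n}}+\norm[2]{\bx^{n}_{j_n}-\bx^{n-1}_{j_n}}\bigr)$ with \emph{active-block increments only}. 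Your scruple about inactive blocks---that $\partial g_i(\bx^{n+1}_i)+\nabla_{\bx_i}f(\bx^{n+1})$ for $i\neq j_n$ carries only the stale certificate from iteration $m_i$---pinpoints precisely what that one-line justification leaves implicit, so the concern is well placed; but as a proof your proposal stops at its self-declared crux and never establishes \ref{ass:Hs:error}.

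Moreover, the specific patch you sketch cannot work as stated, because it is circular: you propose to tame the aggregated inactive-block error using ``the already-summable increments coming from the finite-length part of Theorem~\ref{thm:KL-theorem-descent}'', but \eqref{eq:KL-theorem-finite-length-dist} is the \emph{conclusion} of that theorem, available only after all of Assumption~\ref{ass:Hs}---including \ref{ass:Hs:error}---has been verified. At the verification stage you may only use Lemma~\ref{lem:simple-conv}, i.e.\ $\sum_n d_n^2<\infty$ and $d_n\to 0$, and this does not make the telescoped quantity $\sum_{k=m_i}^{n}\norm[2]{\bx^{k+1}-\bx^k}$ comparable to $\frac b2(d_{n+1}+d_n)$: essential cyclicity permits arbitrarily long gaps between visits to a block, during which $\dist\bigl(0,\partial g_i(\bx^{n+1}_i)+\nabla_{\bx_i}f(\bx^{n+1})\bigr)$ can stay bounded away from zero while $d_{n+1}+d_n\to 0$, so no uniform $b$, and no $\ell_1$ sequence $\eps_n$, can emerge from this route. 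To close the argument you must either follow the paper and bound the lazy slope blockwise with active-block increments via the separability remark, or restructure the analysis around sweeps/epochs in which every block is updated at least once and measure $d_n$ per sweep (in the spirit of \cite{CPR16})---but that is a genuinely different argument from the per-iteration verification you set up.
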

\begin{proof}
  As the $n$th iteration of Algorithm~\ref{alg:ipiano-bvm-gen} reads exactly the same as in Algorithm~\ref{alg:ipiano-vm-gen} but applied to the block coordinate $j_n$ only, we can directly apply Propositions~\ref{prop:vm-ipiano-Lyapunov-descent}, and obtain 
  \[
      \bH_{(\bdelta_\n^\sigma, \bA_n)}(\bx^{n+1},\bx^n) \leq \bH_{(\delta_n^\sigma,A_n)} (\bx^{n},\bx^{n-1})
        -\gamma_{n,j_n}\varsigma (A_{n,j_n}) \norm[2]{\bx_{j_n}^n-\bx_{n_j}^{n-1}}^2\,,
  \]
  and the function $\bH$ is monotonically decreasing along the iterations, i.e., the parameters in the algorithm are chosen such that one step on an arbitrary block decreases the value of $\bH$ unless the block coordinate is already stationary.

  Since the non-smooth part of the optimization problem \eqref{eq:problem-class-block} is additively separated the estimation of the subdifferential is easy as it reduces to the Cartesian product of the subdifferential with respect to each block. Therefore, Proposition~\ref{prop:vm-ipiano-error-bound} can be used analogously to deduce
  \[
    \norm[-]{\partial \F(\bx^\np, \by^\np, \bA_\np,\bdelta_\np)} \leq \frac b2 \left(\norm[2]{\bx^\np_{j_n} - \bx^\n_{j_n}}^2 + \norm[2]{\bx^\n_{j_n} - \bx^\nm_{j_n}}^2  \right) \,.
  \]

  Under the assumption that each block is updated at least after $n^\prime$ 
  iterations, also the continuity results from 
  Proposition~\ref{prop:vm-ipiano-Hs-cont} can be transferred easily to the 
  setting of Algorithm~\ref{alg:ipiano-bvm-gen},
  i.e., we can conclude that any convergent subsequence of block coordinates
  actually $\F$-converges to the limit point ($\lim_{k\to \infty}
  g_i(\bx_i^{n_k}) = g_i(\bx^*_i)$ for each block $i\in\set{1,\ldots,\dimJ}$
  and $f$ is continuous anyway).

  Therefore, the conditions in Assumption~\ref{ass:Hs} are verified by 
  $d_n = \norm[2]{\bx^\n_{j_n} - \bx^\nm_{j_n}}$, 
  $a_n=\gamma_{n,j_n}\varsigma (A_{n,j_n}) $, 
  $u_n=(\bdelta_\n^\sigma, \bA_n)$,
  $b_n\equiv1$, $\eps_n\equiv0$,
  $I=\set{1,2}$, and $\theta_1=\theta_2=\frac 12$. \ref{ass:Hs:distance} is 
  also satisfied because of the finite repetition of the updates, and 
  \ref{ass:Hs:params} is clearly satisfied.
\end{proof}

\section{Numerical application} \label{sec:num}

\subsection{A Mumford--Shah-like problem}

The continuous Mumford--Shah problem is given formally by
\begin{equation} \label{eq:MS}
  \min_{\img,\Gamma}\, \frac \lambda2 \INT[\Omega]{ \abs{\img-\noisy}^2 }x + \INT[\Omega\smallsetminus \Gamma] {\abs{\nabla \img}^2}x + \gamma \abs{\Gamma} \,,
\end{equation}
where $\map \img{\Omega}{\R}$ is an image on the image domain $\Omega \subset \R^2$ and $\map{\noisy}{\Omega}{\R}$ is a given noisy image, $\abs{\Gamma}$ measures the length of the jump set $\Gamma$. Intuitively, a solution $\img$ must be smooth except on a possible jump set $\Gamma$, and approximate $\noisy$. The positive parameters $\lambda$ and $\gamma$ steer the importance of each term. In order to solve the problem, the jump set $\Gamma$ needs to represented with a mathematical object that is amenable for a numerical implementation. 

Therefore, we consider the well-known Ambrosio--Tortorelli approximation \cite{AT90} given by
\begin{equation} \label{eq:MS-AT}
  \min_{\img,\edge}\, \frac \lambda2 \INT[\Omega]{ \abs{\img-\noisy}^2 }x  + \INT[\Omega] {\edge^2\abs{\nabla \img}^2} x + \gamma \INT[\Omega] {\eps \abs{\nabla \edge}^2 + \frac{(\edge-1)^2}{4\eps}}x \,,
\end{equation}
where $\eps >0$ is a fixed parameter and $\map \edge \Omega [0,1]$ is a (soft) edge indicator function, also called a phase-field. The last integral is shown to Gamma-converge to the length of the jump set of \eqref{eq:MS} as $\eps\to 0$. 

\begin{figure}
\begin{center}
\subfigure[\label{fig:monroe-inpaint-orig}original image $\noisy$]%
          {\includegraphics[width=0.24\linewidth]{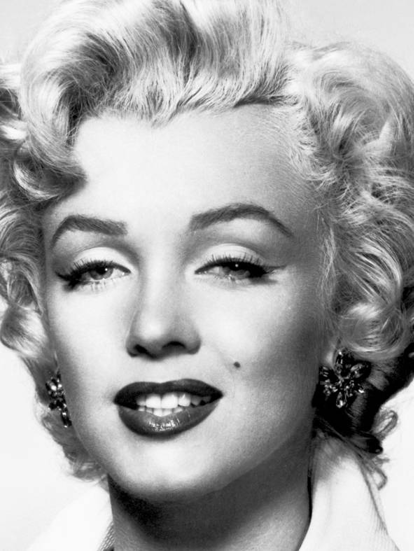}}
\subfigure[\label{fig:monroe-inpaint-mask}mask $\mask$ ($90\%$ unknown)]
          {\includegraphics[width=0.24\linewidth]{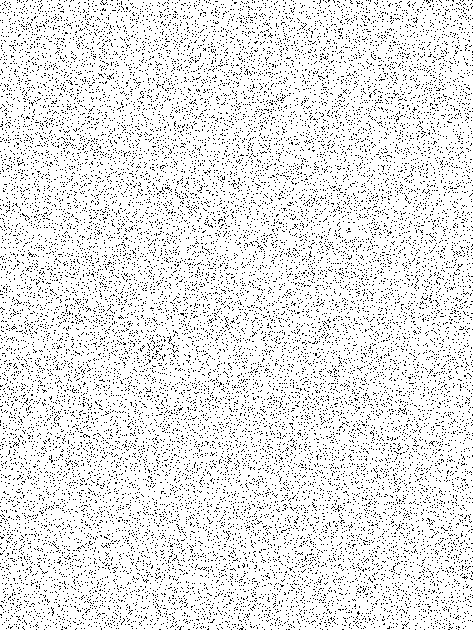}}
\subfigure[\label{fig:monroe-inpaint-AT}inpainting $\img$ using \eqref{eq:MS-AT-inpaint}]%
          {\includegraphics[width=0.24\linewidth]{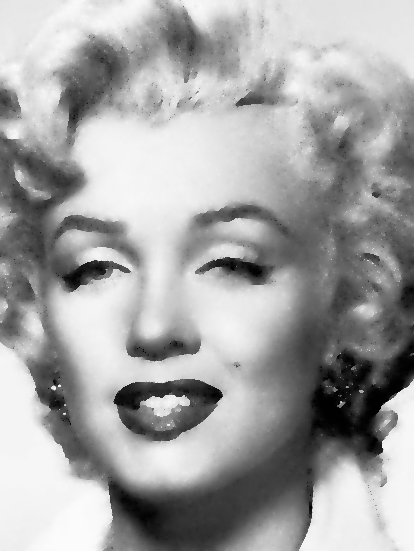}}
\subfigure[\label{fig:monroe-inpaint-LinDiff}linear diffusion inpainting]%
          {\includegraphics[width=0.24\linewidth]{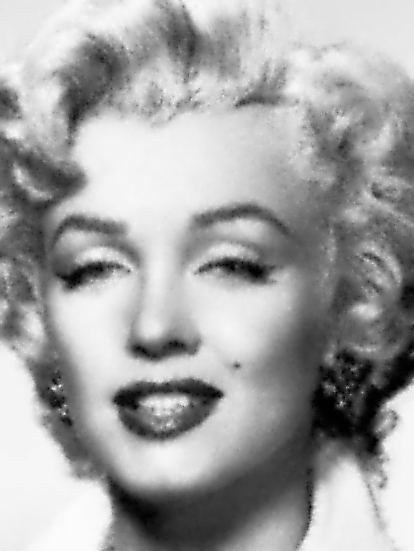}}
\end{center}
\caption{\label{fig:monroe-inpaint}Example for image inpainting/compression. The gray values of the original image (a) are stored only at the mask points (b), where known values are black $[\mask=1]$ and unknown ones are white $[\mask = 0]$. Based on $10\%$ known gray values the original image is reconstructed in (c) with the Ambrosio--Tortorelli inpainting \eqref{eq:MS-AT-inpaint} that we evaluate algorithmically in this paper, and in (d) with a simple linear diffusion model \cite{MW09} which arises as a special case of \eqref{eq:MS-AT-inpaint} when the edge set $\edge$ is fixed to $1$ everywhere on the image domain $\Omega$.}
\end{figure}
In this section, we solve a slight variation of this problem. Instead of an image denoising model we are interested in an inpainting problem (as shown in Figure~\ref{fig:monroe-inpaint}), which is usually more difficult. In image inpainting, the true information about the original image is only given on a subset $[\mask = 1]$ of the image domain (black pixels in Figure~\ref{fig:monroe-inpaint-mask}), where $\map\mask{\Omega}{\set{0,1}}$---the original image $\noisy$ is unknown on $[\mask =0]$ (white part Figure~\ref{fig:monroe-inpaint-mask}). In \cite{GWWBBS05}, the idea of image inpainting is pushed to a limit and used for PDE-based image compression, i.e., the inpainting mask $[\mask=1]$ is a small subset of $\Omega$. Usually a simple PDE is used for reconstructing the original image based on its gray values given only on mask points, for instance linear diffusion in \cite{MW09} (result given in Figure~\ref{fig:monroe-inpaint-LinDiff}). When the inpainting mask is optimized, linear diffusion based inpainting is shown to be competetive with JPEG and sometimes with JPEG2000. Therefore using a more general inpainting model combined with an optimized inpainting mask is expected to improve this performance. We consider the model
\begin{equation} \label{eq:MS-AT-inpaint}
  \begin{split}
  \min_{\img,\edge}&\ \INT[\Omega] {\edge^2\abs{\nabla \img}^2} x + \gamma \INT[\Omega] {\eps \abs{\nabla \edge}^2 + \frac{(\edge-1)^2}{4\eps}}x \\
  \st&\ \img(x) = \noisy(x)\,,\quad  \forall x\in [\mask = 1] \,,
  \end{split}
\end{equation}
which extends the linear diffusion model by optimizing for an additional edge set $\edge$. The linear diffusion model is recovered when fixing $\edge=1$ on $\Omega$. Since we want to evaluate our algorithms, we neglect the development made for finding an optimal inpainting mask and generate the mask by randomly selecting $10\%$ as known pixels. 

From now on, we discretize the problem and with a slight abuse of notation. We use the same symbols to denote the discrete counterparts of the above introduced variables: $\noisy\in\R^\dimN$ is the (vectorized\footnote{The columns of the image are stacked to a long vector.}) original image, $\mask\in\R^\dimN$ is the (inpainting) mask, $\img\in\R^\dimN$ is the optimization variable (representing a vectorized image), and $\edge\in [0,1]^\dimN$ represents the jump (or edge) set of \eqref{eq:MS}. The continuous gradient $\nabla$ is replaced by a discrete derivative operator $D\in\R^{2\dimN\times\dimN}$ that implements forward differences in horizontal $D_1\in\R^{\dimN\times\dimN}$ and vertical direction $D_2\in\R^{\dimN\times\dimN}$ with homogeneous boundary conditions, i.e., forward differences across the image boundary are set to $0$. Our discretized model of \eqref{eq:MS-AT-inpaint} reads
\begin{equation}  \label{eq:MS-AT-inpaint-discrete}
  \begin{split}
  \min_{\img,\edge}&\ \frac 12\norm[2]{\diag(\edge) (D_1 \img) }^2 +\frac 12\norm[2]{\diag(\edge) (D_2 \img) }^2 + \frac {\gamma\eps}2 \norm[2]{D \edge}^2 + \frac{\gamma}{4\eps}\norm[2]{\edge-1}^2 \\
  \st&\ \img_i = \noisy_i\,,\quad \forall i\in\set{1,\ldots,\dimN}\ \text{with}\ \mask_i = 1 \,,
  \end{split}
\end{equation}
where $\map{\diag}{\R^\dimN}{\R^{\dimN\times\dimN}}$ puts a vector on the diagonal of a matrix. Figure~\ref{fig:monroe-inpaint-sol} shows the input data, the reconstructed image, and the reconstructed edge set, for $\eps=0.1$ and $\gamma=1/400$ and the number of pixel $\dimN = 551\cdot 414 =228114$.

In the following, we evaluate several algorithms that use a variable metric. Let 
\begin{gather*}
  g_1(\img) := \ind X(\img)\text{ with } X:=\set{\img\in\R^\dimN\vert\, \img_i = \noisy_i\ \text{if}\ \mask_i = 1 }\,,\qquad
  g_2(\edge) := \frac{\gamma}{4\eps}\norm[2]{\edge-1}^2 \\
  f(\img,\edge) := \frac 12 \left( \norm[2]{\diag(\edge) (D_1 \img) }^2 +\norm[2]{\diag(\edge) (D_2 \img) }^2 + \gamma\eps \norm[2]{D \edge}^2 \right)\,.
\end{gather*}
We can apply iPiano to \eqref{eq:problem-class} with $x=(\img,\edge)$ and $g(x) = (g_1(\img),g_2(\edge))$, or block coordinate iPiano to \eqref{eq:problem-class-block} with $\bx_1=\img$ and $\bx_2=\edge$.

In order to determine a suitable metric, we first compute the derivatives of $f$
\[
  \begin{split}
  \nabla_\img f(\img,\edge) =&\  \left( D_1^\top \diag(\edge^2) D_1+ D_2^\top \diag(\edge^2) D_2 \right)\img  \\
  \nabla_\edge f(\img,\edge) =&\ \left( \diag( (D_1\img)^2 ) + \diag( (D_2\img)^2) + \gamma\eps D^\top D \right) \edge \,,
  \end{split}
\]
where the squares are to be understood coordinate-wise. A feasible metric for block coordinate variable metric iPiano (\BCVMiPiano) must satisfy \eqref{eq:bipiano:quad-upper-bnd}. Therefore, for the $\img$-update step ($\edge$ is fixed), we require $A_{n,\img}$ (the metric w.r.t. the block of $\img$ coordinates) to satisfy
\[
  \scal{\nabla_\img f(\img,\edge) - \nabla_\img f(\img^\prime,\edge) - A_{n,\img}(\img - \img^\prime)}{\img - \img^\prime} \leq 0 
\]
for all $\img, \img^\prime$, which is achieved, for example, by a diagonal matrix $A_{n,\img}$ given by
\begin{equation} \label{eq:AT-metric-img}
  (A_{n,\img})_{i,i} = \sum_{j=1}^\dimN \abs{\left(  D_1^\top \diag(\edge^2) D_1+ D_2^\top \diag(\edge^2) D_2 \right)_{i,j}}
\end{equation}
for all $i\in\set{1,\ldots, \dimN}$. In order to avoid numerical problems, we 
add a small numerical constant $10^{-9}$ to the diagonal of $A_{n,\img}$. 
For the $\edge$-update ($\img$ is fixed), analogously, we require $A_{n,\edge}$ (the metric w.r.t. the block of $\edge$ coordinates) to satisfy
\[
  \scal{\nabla_\img f(\img,\edge) - \nabla_\img f(\img,\edge^\prime) - A_{n,\edge}(\edge - \edge^\prime)}{\edge - \edge^\prime} \leq 0 
\]
for all $\edge, \edge^\prime$, which is achieved, for example, by a diagonal matrix $A_{n,\edge}$ given by
\begin{equation} \label{eq:AT-metric-edge}
  (A_{n,\edge})_{i,i} = \sum_{j=1}^\dimN \abs{\left(   \diag( (D_1\img)^2 ) + \diag( (D_2\img)^2) + \gamma\eps D^\top D  \right)_{i,j}} 
\end{equation}
for all $i\in\set{1,\ldots, \dimN}$. Note that compared to \eqref{eq:bipiano:quad-upper-bnd} the metric contains the scaling $L_{n,\img}$ and $L_{n,\edge}$, respectively. For constant step size schemes ($A_{n,\img}=A_{n,\edge}=\opid$) we use $L_\img \leq 8$ and\footnote{Note that $\noisy$ is normalized to $[0,1]$ and, thus, we observed that $\img$ stays in $[0,1]$ too. Therefore $(D_1\img)_i^2$ is in $[0,1]$.} $L_\edge \leq 2+8 \gamma\eps$. 

\begin{figure}[t]
\begin{center}
  \includegraphics[width=0.8\linewidth]{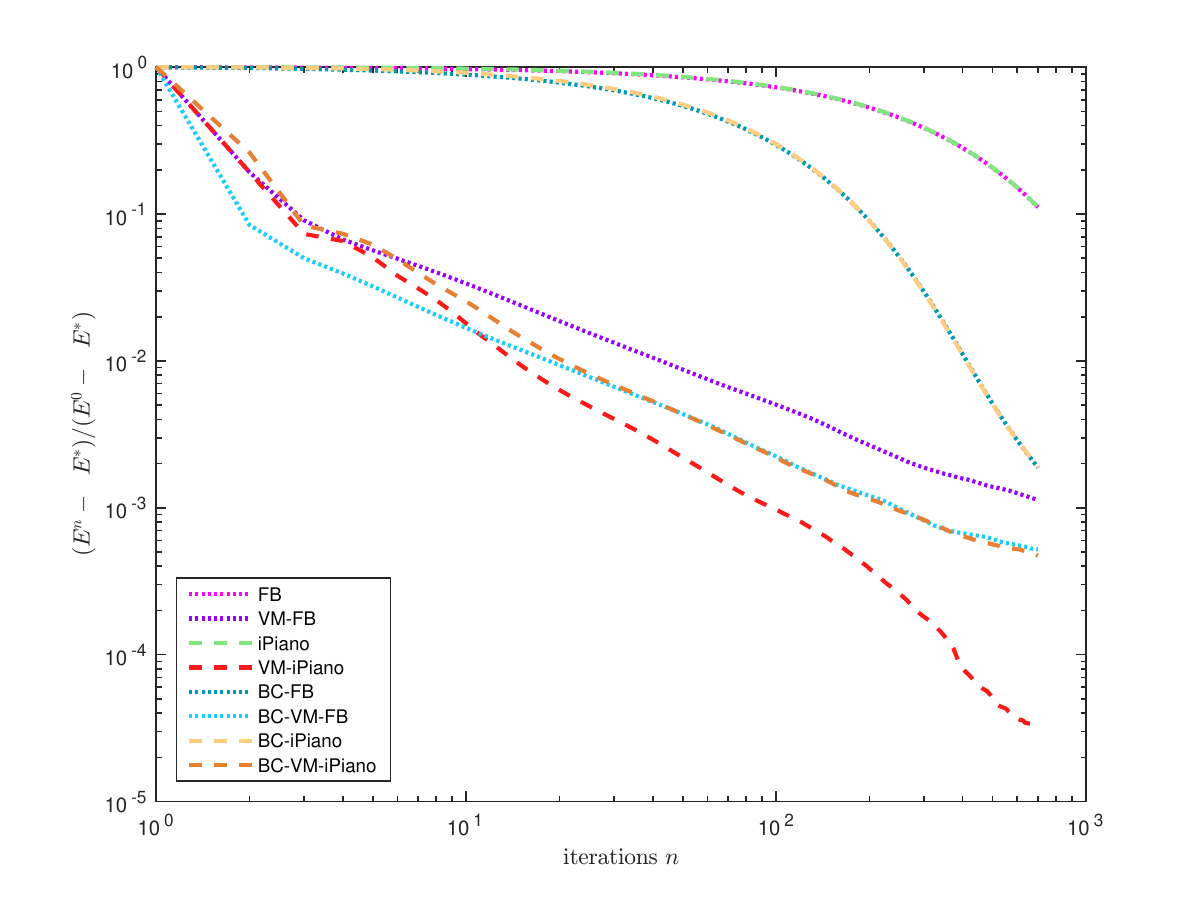}
\end{center}
\caption{\label{fig:AT-inpaint-eval}Number of iterations vs. relative objective
value for solving \eqref{eq:MS-AT-inpaint-discrete}. The performance is
significantly improved for methods that take a variable metric into account.
Intuitively, this means that the coordinates of the optimization variable are
irregularly scaled along the iterations. The variable metric version of iPiano
shows the best performance.}
\end{figure}
Besides \BCVMiPiano, we test forward--backward splitting (\FB) with constant step size scheme $\alpha=2/\max(L_\img,L_\edge)$, block coordinate forward--backward splitting (\BCFB) with step sizes $\alpha_\img = 2/L_\img$ and $\alpha_\edge = 2/L_\edge$ (this method is also known as PALM \cite{BST14}), variable metric forward--backward splitting (\VMFB) with the metric \eqref{eq:AT-metric-img} and \eqref{eq:AT-metric-edge} as a composed diagonal matrix, block coordinate variable metric forward--backward splitting (\BCVMFB) with the metric \eqref{eq:AT-metric-img} and \eqref{eq:AT-metric-edge}, iPiano (\iPiano) with constant step size scheme $\alpha=2(1-\beta)/\max(L_\img,L_\edge)$, block coordinate iPiano (\BCiPiano) with constant step size scheme $\alpha_\img = 2(1-\beta)/L_\img$ and $\alpha_\edge = 2(1-\beta)/L_\edge$, variable metric iPiano (\VMiPiano) with the metric \eqref{eq:AT-metric-img} and \eqref{eq:AT-metric-edge} as a composed diagonal matrix, and block coordinate variable metric iPiano (\BCVMiPiano) with the metric \eqref{eq:AT-metric-img} and \eqref{eq:AT-metric-edge}. For all methods that incorporate an inertial parameter, it is set to $\beta = 0.7$. 

The metric that is used for \VMFB and \VMiPiano is actually not feasible, as \eqref{eq:AT-metric-img} and \eqref{eq:AT-metric-edge} are not sufficient to guarantee that the metric induces a quadratic majorizer to the function $f$ (cf. \eqref{eq:ipiano:quad-upper-bnd}). The gradient is not linear with respect to both coordinates. The gradient is linear only if one coordinate is fixed. Nevertheless, in our practical experiments, the methods converged. In future work, we want to analyze if this inaccuracy can be compensated by making use of relative error conditions, which are not yet incorporated into the algorithms.

We solve problem \eqref{eq:MS-AT-inpaint-discrete} with all methods up to 1000 iterations and define $E^*$ as the minimal objective value  that is achieved among all methods. Let $E^0$ be the initial value. Figure~\ref{fig:AT-inpaint-eval} plots the decrease of the relative objective value $(E^\n-E^*)/(E^0-E^*)$ along the iterations $\n$ on a logarithmic scale on both axes.

The performance of $\FB$ and $\iPiano$ are nearly identical as they do not explore the different scaling of $\img$- and $\edge$-coordinates, unlike $\BCFB$ and $\BCiPiano$. As both block coordinates seem to \enquote{live} on a different scale, block coordinate methods are favorable. However, as the immense performance speed up of the variable metric methods shows the irregular scaling happens to be present also among different $\img$-coordinates, respectively, $\edge$-coordinates. Throughout the experiments, we have noticed that optimization problems where regularization (like smoothness between pixels) is important, inertial methods seem to perform slightly better in general. For this experiment variable metric iPiano shows the best performance and sets the value for $E^*$, the lowest objective value among all methods after 1000 iterations. Note that the computational cost per iterations is nearly the same for all methods.


\begin{figure}[t]
\begin{center}
\subfigure[\label{fig:monroe-inpaint-sol-input}input to model \eqref{eq:MS-AT-inpaint-discrete}]%
          {\includegraphics[width=0.24\linewidth]{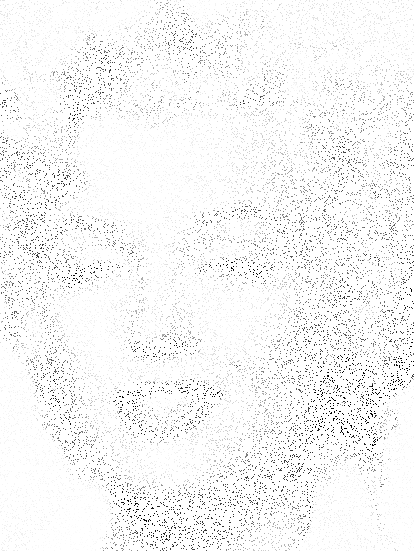}}
\subfigure[\label{fig:monroe-inpaint-sol-img}inpainting $\img$ using \eqref{eq:MS-AT-inpaint-discrete}]%
          {\includegraphics[width=0.24\linewidth]{monroe_inpaint.png}}
\subfigure[\label{fig:monroe-inpaint-sol-edges}edges $\edge$ using \eqref{eq:MS-AT-inpaint-discrete}]%
          {\includegraphics[width=0.24\linewidth]{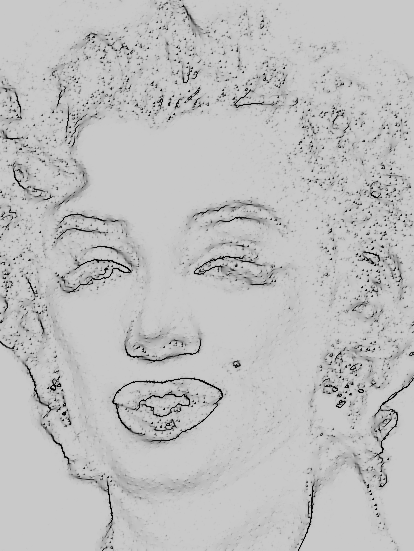}}
\end{center}
\caption{\label{fig:monroe-inpaint-sol}Solution to Problem~\ref{eq:MS-AT-inpaint-discrete}. (a) shows the inpainting mask from Figure~\ref{fig:monroe-inpaint-mask} weighted with the gray values from Figure~\ref{fig:monroe-inpaint-orig}. (b) shows the solution image $\img$ and (c) the solution edge set $\edge$ of \eqref{eq:MS-AT-inpaint-discrete}. Although the model is non-convex, visually all algorithms resulted in a similar solution. Figure~\ref{fig:AT-inpaint-eval} shows that the final objective values differ.}
\end{figure}

\section{Conclusion}

In this paper, we presented a convergence analysis for abstract inexact generalized descent
methods based on the KL-inequality that unifies and generalizes the analysis in
Attouch et al. \cite{ABS13}, Frankel et al. \cite{FGP14}, Ochs et al.
\cite{OCBP14}, Bolte and Pauwels \cite{BP16}, and several other more explicit 
algorithms. The novel convergence theorem allows for more flexibility in the
design of algorithms. More in detail, algorithms that imply a descent
on a proper lower semi-continuous parametric function and satisfy a certain
flexible relative error condition are considered. The parametric function can be seen as
an objective function that may vary along the iterations.
The gained flexibility is used to formulate a variable metric
version of iPiano (an inertial forward--backward splitting-like method).
Moreover, thanks to usage of a generic distance measure in the abstract
convergence theorem, we obtain a block coordinate variable metric version of
iPiano almost for free. Finally,
the algorithms are shown to perform well on the practical problem of image
compression using a Mumford--Shah-like regularization. 

As future work, we will investigate whether the gained flexibility can be used,
for example, to prove the convergence of (inertial) Bregman proximal descent
methods with Bregman functions that are not required to be strongly convex or
to have a Lipschitz continuous gradient.

\appendix
\section{Appendix} 

\subsection{Relation to algorithms with analogue convergence guarantees}\label{sec:appdx}

In recent works, the convergence analysis of algorithms for non-smooth 
non-convex optimization problems often follows the lines of the proof 
methodology suggested in \cite{BST14}, i.e., the convergence is explicitly
verified, although it suffices to verify the abstract conditions in \cite{ABS13}.
In the following, for several such algorithms, the relation to the abstract 
conditions in \cite{ABS13,FGP14,OCBP14} and Assumption~\ref{ass:Hs} is shown. 
For \cite{LP16,LP15b,LFP16}, the generalizations of our paper are necessary to cast 
them into the abstract framework. Note that we do not provide an exhaustive list 
of examples. Most of the algorithms mentioned in the introduction fall into 
our unifying abstract setting.

\paragraph{Relation to PALM \cite{BST14}.} In \cite{BST14}, the general proof 
methodology is introduced. Thanks to a uniformization result of the KL-inequality, 
which we also use in this paper (see Lemma~\ref{lem:uniformization}), the 
convergence proof was simplified compared to \cite{ABS13}. 
\cite[Lemma 3(i)]{BST14} verifies \ref{ass:ABS13-Hs:descent}, 
\cite[Lemma 4]{BST14} shows \ref{ass:ABS13-Hs:error}, and 
\cite[Lemma 5(i)]{BST14} contains the continuity statement \ref{ass:ABS13-Hs:cont}.

\paragraph{Relation to \cite{BCL15}.} An inertial algorithm for the sum of 
two non-convex functions was proposed in this paper. The setting is slightly
more general than \cite{OCBP14} as the non-smooth part of the objective is
allowed to be non-convex. The proximal subproblems are formulated with respect
to Bregman distances that are required to be strongly convex and with Lipschitz
continuous gradient, which provides a lower and upper bound in the Euclidean
metric for the Bregman distance terms.  The proof of convergence is, hence,
analogue to \cite{OCBP14}. However, unlike in \cite{OCBP14}, the sufficient
decrease condition uses $d\pit\n=\norm[2]{x\iter\np-x\iter\n}$ instead of 
$\norm[2]{x\iter\n-x\iter\nm}$. Both conditions obviously fall into the more
general set of conditions in Assumption~\ref{ass:Hs}. The conditions in
Assumption~\ref{ass:Hs} are verified in \cite[(H1)--(H3) on page 13]{BCL15} in
analogy to \ref{ass:OCBP14-Hs:descent}--\ref{ass:OCBP14-Hs:cont} for which we
provide the details in Section~\ref{sec:rel-abstr-conv}.

\paragraph{Relation to \cite{LP16}.} A Douglas--Rachford splitting algorithm for 
solving non-smooth non-convex problems of the form
\begin{equation} \label{eq:LP16-min-prob}
  \min_{x\in \R^N}\, f(x) + g(x) \,,
\end{equation}
where $f$ has Lipschitz continuous gradient and $g$ is proper lower 
semi-continuous, is proposed. The algorithm generates sequences 
$\seq[\n\in\N]{x\iter\n}$, $\seq[\n\in\N]{y\iter\n}$, and $\seq[\n\in\N]{z\iter\n}$
according to the following update scheme: $(\gamma>0)$
\[
  \begin{split}
    y\iter\np \in&\ \argmin{y} f(y) 
                      + \frac 1{2\gamma} \norm[2]{y-x\iter\n}^2 \\
    z\iter\np \in&\ \argmin{z} g(z) 
                      + \frac 1{2\gamma} \norm[2]{2y\iter\np-x\iter\n -z }^2 \\
    x\iter\np =&\ x\iter\n + (z\iter\np - y\iter\np)
  \end{split}
\]
The global convergence of the whole sequence 
$\seq[\n\in\N]{(y\iter\n,z\iter\n,x\iter\n)}$ is shown in \cite[Theorem 2]{LP16}
for certain values of $\gamma>0$, and is based on a descent property of the 
merit function
\[
  \mathfrak D_\gamma(y,z,x) := f(y) + g(z) - \frac{1}{2\gamma} \norm[2]{y-z}^2 
                     + \scal{x-y}{z-y}\,.
\]
During the proof, which they tailored to their method, the abstract conditions 
in Assumption~\ref{ass:Hs} are verified. \ref{ass:Hs:descent} is verified 
in \cite[Eq. (23)]{LP16} with some constant $a>0$ for the function 
$\mathfrak D_\gamma$ using $d\iter\n:=\norm[2]{y\iter{\np}-y\iter{\n}}$
\[
  \mathfrak D_\gamma(y\iter\np,z\iter\np,x\iter\np) 
  + a\norm[2]{y\iter\np-y\iter\n}^2
  \leq \mathfrak D_\gamma(y\iter\n,z\iter\n,x\iter\n)\,,
\]
\ref{ass:Hs:error} is established in \cite[Eq. (28)]{LP16} for some $b>0$, 
\[
  \dist(0, \partial \mathfrak D_\gamma(y\iter\n,z\iter\n,x\iter\n))
  \leq b \norm[2]{y\iter\np - y\iter\n} \,,
\]
using $I:=\set{0}$, $\theta_0=1$, $b\pit\n\equiv1$, $\eps\pit\n\equiv0$, and 
\ref{ass:Hs:cont} is proved by assuming the existence of a cluster point
and the $\mathfrak D_\gamma$-attentive convergence from 
\cite[Eq. (25)--(27)]{LP16}. The distance condition \ref{ass:Hs:distance} is
asserted by \cite[Eq. (22),(10)]{LP16} and the relation in the $x$-update step.
\ref{ass:Hs:params} is obviously satisfied, since we are in a setting with
constant parameters. Therefore, we can apply our
Theorem~\ref{thm:KL-theorem-descent} to prove the same convergence results as in
\cite[Theorem 2]{LP16}: $\seq[\n\in\N]{y\iter\n}$ converges and, using the same 
equations that realize the distance condition, convergence of 
$\seq[\n\in\N]{z\iter\n}$ and $\seq[\n\in\N]{x\iter\n}$ can be concluded.

\paragraph{Relation to \cite{LP15b}.} In a similar way to \cite{LP16}, the 
proximal ADMM proposed in \cite{LP15b} can be cast into our framework. The goal
is to solve the following problem:
\[
  \min_{x\in \R^N}\, h(x) + P(\mathcal M x)\,,
\]
with a linear mapping $\mathcal M$, a proper lower semi-continuous function $P$, and a twice 
continuously differentiable function $h$ with bounded Hessian. The
sufficient decrease condition is proved for the Lagrange function 
\[
  L_\beta(x,y,z) = h(x) + P(y) - \scal{z}{\mathcal Mx-y} 
                  + \frac\beta2\norm[2]{\mathcal Mx-y}^2 
\]
in \cite[Eq. (36)]{LP15b} with $d\iter\n := \norm[2]{x\iter\np-x\iter\n}$, 
and some $a>0$. Different from the analysis in \cite{LP16}, where the relative
error condition is explicit, it is implicit in \cite{LP15b}. The condition
\ref{ass:Hs:error} is verified in \cite[Eq. (35)]{LP15b} for some $b>0$, 
$b\pit\n\equiv1$, $\eps\pit\n\equiv0$, $I=\set{1}$ and $\theta_1=1$. The 
condition \ref{ass:Hs:cont} is proved in \cite[Theorem 2(i)]{LP15b}. The 
distance condition \ref{ass:Hs:distance} follows directly from 
\cite[Eq. (14),(15)]{LP15b}, and \ref{ass:Hs:params} is again obviously 
satisfied.

\paragraph{Relation to \cite{LFP16}.} A very general multi-step forward--backward
scheme is proposed to solve problems of the setting of \eqref{eq:LP16-min-prob}.
The main update step is a forward--backward step, executed at an extrapolated 
point with gradient direction evaluated at another extrapolated point. Both of 
these extrapolations allow for a linear combination (possibly different ones) of 
finitely many preceding step directions. Global convergence and a finite length
property are proved in \cite[Theorem 2.2]{LFP16} explicitly for this algorithm 
for the sequence $\seq[\n\in\N]{x\iter\n}$ and $\seq[\n\in\N]{z\iter\n}$ with 
$z\iter\n=(x\iter\n,x\iter\nm,\ldots,x\iter{n-s+1})$ for some $s\in \N$. 
The statements that establishes the conditions in Assumption~\ref{ass:Hs} are 
collected in \cite[(R.1)--(R.3)]{LFP16} in the supplementary material. The proof
idea follows the concepts of the proof of iPiano \cite{OCBP14}. The 
arising Lyapunov function and the product space is naturally generalized to the 
number of terms used in the linear combinations of the extrapolations.

{\small
\bibliographystyle{ieee}
\bibliography{ochs}

\begin{thebibliography}{10}\itemsep=-1pt

\bibitem{AMA05}
P.~Absil, R.~Mahony, and B.~Andrews.
\newblock Convergence of the iterates of descent methods for analytic cost
  functions.
\newblock {\em SIAM Journal on Optimization}, 16(2):531--547, Jan. 2005.

\bibitem{AT90}
L.~Ambrosio and V.~Tortorelli.
\newblock Approximation of functionals depending on jumps by elliptic
  functionals via $\gamma$-convergence.
\newblock {\em Communications on Pure and Applied Mathematics}, 43:999--1036,
  1990.

\bibitem{AB09}
H.~Attouch and J.~Bolte.
\newblock On the convergence of the proximal algorithm for nonsmooth functions
  involving analytic features.
\newblock {\em Mathematical Programming}, 116(1):5--16, June 2009.

\bibitem{ABRS10}
H.~Attouch, J.~Bolte, P.~Redont, and A.~Soubeyran.
\newblock Proximal alternating minimization and projection methods for
  nonconvex problems: An approach based on the {K}urdyka-{{\L}}ojasiewicz
  inequality.
\newblock {\em Mathematics of Operations Research}, 35(2):438--457, May 2010.

\bibitem{ABS13}
H.~Attouch, J.~Bolte, and B.~Svaiter.
\newblock Convergence of descent methods for semi-algebraic and tame problems:
  proximal algorithms, forward--backward splitting, and regularized
  {G}auss--{S}eidel methods.
\newblock {\em Mathematical Programming}, 137(1-2):91--129, 2013.

\bibitem{Auslender92}
A.~Auslender.
\newblock Asymptotic properties of the {F}enchel dual functional and
  applications to decomposition problems.
\newblock {\em Journal of Optimization Theory and Applications},
  73(3):427--449, June 1992.

\bibitem{BCP10}
J.~Bolte, P.~Combettes, and J.-C. Pesquet.
\newblock Alternating proximal algorithm for blind image recovery.
\newblock In {\em International Conference on Image Processing (ICIP)}, pages
  1673--1676, Honk Kong, China, Sept. 2010.

\bibitem{BDL06}
J.~Bolte, A.~Daniilidis, and A.~Lewis.
\newblock The {{\L}}ojasiewicz inequality for nonsmooth subanalytic functions
  with applications to subgradient dynamical systems.
\newblock {\em SIAM Journal on Optimization}, 17(4):1205--1223, Dec. 2006.

\bibitem{BDLS07}
J.~Bolte, A.~Daniilidis, A.~Lewis, and M.~Shiota.
\newblock Clarke subgradients of stratifiable functions.
\newblock {\em SIAM Journal on Optimization}, 18(2):556--572, 2007.

\bibitem{BDLM10}
J.~Bolte, A.~Daniilidis, A.~Ley, and L.~Mazet.
\newblock Characterizations of {{\L}}ojasiewicz inequalities: subgradient
  flows, talweg, convexity.
\newblock {\em Transactions of the American Mathematical Society},
  362:3319--3363, 2010.

\bibitem{BP16}
J.~Bolte and E.~Pauwels.
\newblock Majorization-minimization procedures and convergence of {SQP} methods
  for semi-algebraic and tame programs.
\newblock {\em Mathematics of Operations Research}, Jan. 2016.

\bibitem{BST14}
J.~Bolte, S.~Sabach, and M.~Teboulle.
\newblock Proximal alternating linearized minimization for nonconvex and
  nonsmooth problems.
\newblock {\em Mathematical Programming}, 146(1-2):459--494, 2014.

\bibitem{BLPPR16}
S.~Bonettini, I.~Loris, F.~Porta, M.~Prato, and S.~Rebegoldi.
\newblock On the convergence of variable metric line-search based
  proximal-gradient method under the {Kurdyka}-{Lojasiewicz} inequality.
\newblock {\em ArXiv e-prints}, May 2016.
\newblock arXiv:1605.03791.

\bibitem{BC15}
R.~I. Bot and E.~R. Csetnek.
\newblock An inertial {Tseng}'s type proximal algorithm for nonsmooth and
  nonconvex optimization problems.
\newblock {\em Journal of Optimization Theory and Applications}, pages 1--17,
  Mar. 2015.

\bibitem{BCL15}
R.~I. Bot, E.~R. Csetnek, and S.~L\'{a}szl\'{o}.
\newblock An inertial forward--backward algorithm for the minimization of the
  sum of two nonconvex functions.
\newblock {\em EURO Journal on Computational Optimization}, pages 1--23, Aug.
  2015.

\bibitem{Bregman67}
L.~M. Bregman.
\newblock The relaxation method of finding the common point of convex sets and
  its application to the solution of problems in convex programming.
\newblock {\em {USSR} Computational Mathematics and Mathematical Physics},
  7(3):200--217, 1967.

\bibitem{CJ03}
R.~Chill and M.~Jendoubi.
\newblock Convergence to steady states in asymptotically autonomous semilinear
  evolution equations.
\newblock {\em Nonlinear Analysis: Theory, Methods \& Applications},
  53(7):1017--1039, June 2003.

\bibitem{CPR13}
E.~Chouzenoux, J.-C. Pesquet, and A.~Repetti.
\newblock Variable metric forward--backward algorithm for minimizing the sum of
  a differentiable function and a convex function.
\newblock {\em Journal of Optimization Theory and Applications}, Nov. 2013.

\bibitem{CPR16}
E.~Chouzenoux, J.-C. Pesquet, and A.~Repetti.
\newblock A block coordinate variable metric forward--backward algorithm.
\newblock {\em Journal of Global Optimization}, pages 1--29, Feb. 2016.

\bibitem{CV13}
P.~Combettes and B.~V. {u}.
\newblock Variable metric quasi-{Fej\'{e}r} monotonicity.
\newblock {\em Nonlinear Analysis: Theory, Methods \& Applications}, 78:17--31,
  Feb. 2013.

\bibitem{CV14}
P.~Combettes and B.~V. {u}.
\newblock Variable metric forward--backward splitting with applications to
  monotone inclusions in duality.
\newblock {\em Optimization}, 63(9):1289--1318, Sept. 2014.

\bibitem{Dries98}
L.~V. den Dries.
\newblock {\em Tame topology and $\o$-minimal structures}.
\newblock 150 184. Cambridge University Press, 1998.

\bibitem{FGP14}
P.~Frankel, G.~Garrigos, and J.~Peypouquet.
\newblock Splitting methods with variable metric for
  {Kurdyka}--{{\L}}ojasiewicz functions and general convergence rates.
\newblock {\em Journal of Optimization Theory and Applications},
  165(3):874--900, Sept. 2014.

\bibitem{GWWBBS05}
I.~Galic, J.~Weickert, M.~Welk, A.~Bruhn, A.~Belyaev, and H.-P. Seidel.
\newblock Towards {PDE}-based image compression.
\newblock In N.~Paragios, O.~D. Faugeras, T.~Chan, and C.~Schnörr, editors,
  {\em VLSM}, volume 3752 of {\em Lecture Notes in Computer Science}, pages
  37--48. Springer, 2005.

\bibitem{GLCY16}
B.~Gao, X.~Liu, X.~Chen, and Y.~Yuan.
\newblock On the {{\L}}ojasiewicz exponent of the quadratic sphere constrained
  optimization problem.
\newblock {\em ArXiv e-prints}, Nov. 2016.
\newblock arXiv: 1611.08781.

\bibitem{GS99}
L.~Grippof and M.~Sciandrone.
\newblock Globally convergent block-coordinate techniques for unconstrained
  optimization.
\newblock {\em Optimization Methods and Software}, 10(4):587--637, Jan. 1999.

\bibitem{HJ98}
A.~Haraux and M.~Jendoubi.
\newblock Convergence of solutions of second-order gradient-like systems with
  analytic nonlinearities.
\newblock {\em Journal of Differential Equations}, 144(2):313--320, Apr. 1998.

\bibitem{Hosseini15}
S.~Hosseini.
\newblock Convergence of nonsmooth descent methods via
  {K}urdyka--{{\L}}ojasiewicz inequality on {R}iemannian manifolds.
\newblock Technical Report 1523, Institut f\"{u}r Numerische Simulation,
  Rheinische Friedrich--Wilhelms--Universit\"{a}t Bonn, Bonn, Germany, Nov.
  2015.

\bibitem{HT01}
S.-Z. Huang and P.~Tak{\'a}\v{c}.
\newblock Convergence in gradient-like systems which are asymptotically
  autonomous and analytic.
\newblock {\em Nonlinear Anal.}, 46(5):675--698, Nov. 2001.

\bibitem{HL04}
D.~R. Hunter and K.~Lange.
\newblock A tutorial on {MM} algorithms.
\newblock {\em Journal of the American Statistical Association}, 58(1):30--37,
  2004.

\bibitem{IPS03}
A.~Iusem, T.~Pennanen, and B.~Svaiter.
\newblock Inexact variants of the proximal point algorithm without
  monotonicity.
\newblock {\em SIAM Journal on Optimization}, 13(4):1080--1097, Jan. 2003.

\bibitem{Kurd98}
K.~Kurdyka.
\newblock On gradients of functions definable in o-minimal structures.
\newblock {\em Annales de l'institut Fourier}, 48(3):769--783, 1998.

\bibitem{KP94}
K.~Kurdyka and A.~Parusi{\'n}ski.
\newblock $w_f$-stratification of subanalytic functions and the Łojasiewicz
  inequality.
\newblock {\em C. R. Acad. Paris}, 318:129--133, 1994.

\bibitem{Lageman07}
C.~Lageman.
\newblock Pointwise convergence of gradient-like systems.
\newblock {\em Mathematische Nachrichten}, 280(13-14):1543--1558, Oct. 2007.

\bibitem{LP16}
G.~Li and T.~Pong.
\newblock Douglas--{Rachford} splitting for nonconvex optimization with
  application to nonconvex feasibility problems.
\newblock {\em Mathematical Programming}, 159(1-2):371--401, Sept. 2016.

\bibitem{LP17}
G.~Li and T.~Pong.
\newblock Calculus of the exponent of {Kurdyka}--{{\L}}ojasiewicz inequality
  and its applications to linear convergence of first-order methods.
\newblock {\em Foundations of Computational Mathematics}, pages 1--34, Aug.
  2017.

\bibitem{LP15b}
G.~Li and T.~K. Pong.
\newblock Global convergence of splitting methods for nonconvex composite
  optimization.
\newblock {\em SIAM Journal on Optimization}, 25(4):2434--2460, Jan. 2015.

\bibitem{LP15a}
G.~Li and T.~K. Pong.
\newblock Peaceman--{Rachford} splitting for a class of nonconvex optimization
  problems.
\newblock {\em arXiv:1507.00887 [cs, math]}, July 2015.
\newblock arXiv: 1507.00887.

\bibitem{LL15}
H.~Li and Z.~Lin.
\newblock Accelerated proximal gradient method for nonconvex programming.
\newblock In C.~Cortes, N.~D. Lawrence, D.~D. Lee, M.~Sugiyama, and R.~Garnett,
  editors, {\em Advances in Neural Information Processing Systems (NIPS)},
  pages 379--387. Curran Associates, Inc., 2015.

\bibitem{LFP16}
J.~Liang, J.~Fadili, and G.~Peyr{\'e}.
\newblock A multi-step inertial forward--backward splitting method for
  non-convex optimization.
\newblock In D.~Lee, M.~Sugiyama, U.~Luxburg, I.~Guyon, and R.~Garnett,
  editors, {\em Advances in Neural Information Processing Systems (NIPS)},
  pages 4035--4043. Curran Associates, Inc., 2016.

\bibitem{Loj63}
S.~{\L}ojasiewicz.
\newblock Une propri\'et\'e topologique des sous-ensembles analytiques r\'eels.
\newblock In {\em {L}es {\'E}quations aux {D}\'eriv\'ees {P}artielles}, pages
  87--89, Paris, 1963. \'Editions du centre National de la Recherche
  Scientifique.

\bibitem{Loj93}
S.~{\L}ojasiewicz.
\newblock Sur la g\'eom\'etrie semi- et sous- analytique.
\newblock {\em Annales de l'institut Fourier}, 43(5):1575--1595, 1993.

\bibitem{MW09}
M.~Mainberger and J.~Weickert.
\newblock Edge-based image compression with homogeneous diffusion.
\newblock In X.~Jiang and N.~Petkov, editors, {\em Computer Analysis of Images
  and Patterns}, volume 5702 of {\em Lecture Notes in Computer Science}, pages
  476--483. Springer Berlin Heidelberg, 2009.

\bibitem{MP10}
B.~Merlet and M.~Pierre.
\newblock Convergence to equilibrium for the backward {Euler} scheme and
  applications.
\newblock {\em Communications on Pure and Applied Analysis}, 9(3):685--702,
  Jan. 2010.

\bibitem{Nest04}
Y.~Nesterov.
\newblock {\em Introductory lectures on convex optimization: A basic course},
  volume~87 of {\em Applied Optimization}.
\newblock Kluwer Academic Publishers, Boston, MA, 2004.

\bibitem{Noll13}
D.~Noll.
\newblock Convergence of non-smooth descent methods using the
  {Kurdyka}--{{\L}ojasiewicz} inequality.
\newblock {\em Journal of Optimization Theory and Applications},
  160(2):553--572, Sept. 2013.

\bibitem{Ochs15}
P.~Ochs.
\newblock {\em Long term motion analysis for object level grouping and
  nonsmooth optimization methods}.
\newblock PhD thesis, Albert-Ludwigs-Universit{\"a}t Freiburg, Mar 2015.

\bibitem{OCBP14}
P.~Ochs, Y.~Chen, T.~Brox, and T.~Pock.
\newblock i{P}iano: Inertial proximal algorithm for non-convex optimization.
\newblock {\em SIAM Journal on Imaging Sciences}, 7(2):1388--1419, 2014.

\bibitem{ODBP15}
P.~Ochs, A.~Dosovitskiy, T.~Brox, and T.~Pock.
\newblock On iteratively reweighted algorithms for nonsmooth nonconvex
  optimization in computer vision.
\newblock {\em SIAM Journal on Imaging Sciences}, 8(1):331--372, 2015.

\bibitem{PS16}
T.~Pock and S.~Sabach.
\newblock Inertial proximal alternating linearized minimization ({iPALM}) for
  nonconvex and nonsmooth problems.
\newblock {\em SIAM Journal on Imaging Sciences}, 9(4):1756--1787, Jan. 2016.

\bibitem{Polyak64}
B.~T. Polyak.
\newblock Some methods of speeding up the convergence of iteration methods.
\newblock {\em {USSR} Computational Mathematics and Mathematical Physics},
  4(5):1--17, 1964.

\bibitem{Rock98}
R.~T. Rockafellar and R.~J.-B. Wets.
\newblock {\em Variational Analysis}, volume 317.
\newblock Springer Berlin Heidelberg, Heidelberg, 1998.

\bibitem{Simon83}
L.~Simon.
\newblock Asymptotics for a class of non-linear evolution equations with
  applications to geometric problems.
\newblock {\em Annals of Mathematics}, 118(3):525--571, 1983.

\bibitem{SS99b}
M.~Solodov and B.~Svaiter.
\newblock A hybrid approximate extragradient--proximal point algorithm using
  the enlargement of a maximal monotone operator.
\newblock {\em Set-Valued Analysis}, 7(4):323--345, Dec. 1999.

\bibitem{SS99}
M.~Solodov and B.~Svaiter.
\newblock A hybrid projection-proximal point algorithm.
\newblock {\em Journal of Convex Analysis}, 6(1):50--70, 1999.

\bibitem{SS01}
M.~Solodov and B.~Svaiter.
\newblock A unified framework for some inexact proximal point algorithms.
\newblock {\em Numerical Functional Analysis and Optimization},
  22(7-8):1013--1035, Nov. 2001.

\bibitem{Wilkie96}
A.~J. Wilkie.
\newblock Model completeness results for expansions of the ordered field of
  real numbers by restricted {P}faffian functions and the exponential function.
\newblock {\em Journal of the American Mathematical Society}, 9(4):1051--1094,
  1996.

\bibitem{ZK93}
S.~Zavriev and F.~Kostyuk.
\newblock Heavy-ball method in nonconvex optimization problems.
\newblock {\em Computational Mathematics and Modeling}, 4(4):336--341, 1993.

\end{thebibliography}
}

\end{document}